\theoremstyle{plain}
 \newtheorem{theo+}{Theorem}
 \numberwithin{theo+}{section}
 \newtheorem{prop+}[theo+]{Proposition}
 \newtheorem{coro+}[theo+]{Corollary}
 \newtheorem{lemm+}[theo+]{Lemma}
\theoremstyle{definition}
 \newtheorem{defi+}[theo+]{Definition}
 \newtheorem{not+}[theo+]{Notation}
 \newtheorem{rema+}[theo+]{Remark}
 \newtheorem{example}[theo+]{Example}
\newenvironment{theorem}{\begin{theo+}}{\end{theo+}}
\newenvironment{proposition}{\begin{prop+}}{\end{prop+}}
\newenvironment{corollary}{\begin{coro+}}{\end{coro+}}
\newenvironment{lemma}{\begin{lemm+}}{\end{lemm+}}
\newenvironment{remark}{\begin{rema+}}{\end{rema+}}
\newenvironment{definition}{\begin{defi+}}{\end{defi+}}
\renewcommand{\mathcal}{\mathscr}
\newcommand{\Ae}{\mathcal{A}^e}
\newcommand{\Ac}{\mathcal{A}^\mathrm{c}}
\newcommand{\Ai}{\mathcal{A}^\mathrm{i}}
\newcommand{\BZ}{\mathcal{B}}
\renewcommand{\dh}{\widehat{d}}
\newcommand{\Eh}{\widehat{E}}
\newcommand{\kap}{\boldsymbol{\kappa}}
\newcommand {\fM}{\mathfrak{M}}
\newcommand {\fI}{\mathfrak{I}}
\newcommand{\Pol}{\mathcal{P}}
\newcommand{\ds}{\displaystyle}
\newcommand{\col}{:}
\newcommand {\K} {{\mathbf{k}}}
\newcommand{\Z}{\mathbb{Z}}
\newcommand{\Zp}{\Z_{\ge 0}}
\newcommand{\R}{\mathbb{R}}
\newcommand{\cA}{\mathcal{A}}
\newcommand{\cE}{\mathcal{E}}
\newcommand{\cEh}{\widehat{\mathcal{E}}}
\newcommand{\cM}{\mathcal{M}}
\newcommand{\cP}{\mathcal{P}}
\newcommand{\ba}{\mathbf{a}}
\newcommand{\bb}{\mathbf{b}}
\newcommand{\be}{\mathbf{e}}
\newcommand{\bff}{\mathbf{f}}
\DeclareMathOperator{\Ker}{Ker}
\DeclareMathOperator{\ord}{ord}
\numberwithin{equation}{section}
\title{\bf Bizonotopal Graphical Algebras}
\author{Anatol Kirillov%
  \thanks{Yanqi Lake Beijing Institute of Mathematical Sciences and Applications, Huairou District, Beijing, China, \ \texttt{\href{mailto:kirillov@bimsa.cn}{kirillov@bimsa.cn}}}
  \and
  Gleb Nenashev%
  \thanks{Department of Mathematics and Computer Science, St. Petersburg State University, St. Petersburg, 199178, Russia, \ \ \texttt{\href{mailto:glebnen@gmail.com}{glebnen@gmail.com}}}
  \and
  Boris Shapiro%
  \thanks{Department of Mathematics, Stockholm University, SE-106 91 Stockholm, Sweden \ and \ Department of Mathematics with Computer Science,
    Guangdong Technion-Israel Institute of Technology,     Shantou, China, \
    \  {\texttt{\href{mailto:shapiro@math.su.se} {shapiro@math.su.se}}}}
  \and
  Arkady Vaintrob%
  \thanks{Department of Mathematics, University of Oregon, Eugene, OR 97403, USA, \ \texttt{\href{mailto:vaintrob@uoregon.edu} {vaintrob@uoregon.edu}}}
}
\date{}
\begin{document}

\maketitle

\begin{abstract}
  Zonotopal algebras (external, central, and internal) of an undirected graph~$G$, introduced by Postnikov--Shapiro and Holtz--Ron, are finite-dimensional commutative graded algebras whose Hilbert series encode
  a wealth of combinatorial information about $G$.
  In this paper, we associate to $G$ a new family of algebras, which we call \emph{bizonotopal}, since their definition involves doubling the set of edges of~$G$.
  These algebras are monomial and exhibit intricate properties related, among other things, to the combinatorics of graphical parking functions and their associated polytopes.

  Unlike classical zonotopal algebras, the Hilbert series of bizonotopal algebras are not specializations of the Tutte polynomial of~$G$.
  Nevertheless, we show that in the external and central cases these Hilbert series satisfy a modified deletion--contraction relation.
  In addition, we prove that the external bizonotopal algebra is a complete graph invariant.
\end{abstract}

\section{Introduction}

Let $G=(V,E)$ be a finite undirected graph, possibly with loops and multiple edges, with vertex set $V$ and edge set $E$.
The \emph{external zonotopal algebra} $\Ae_G$ of $G$, introduced in~\cite{PSS}, is the commutative graded algebra defined as follows.

Let $\K$ be a field of characteristic zero. Consider the \emph{edge algebra}
\begin{equation}\label{eq:sqfree}
  \cE_G := \K[E]/(x^2_e \col e\in E),
\end{equation}
the quotient of the polynomial algebra $\K[E]:=\K[x_e \col e\in E]$ in the \emph{edge variables} $x_e$, $e\in E$, by the ideal generated by their squares.
This algebra has a basis consisting of the monomials $\ds \prod_{e\in S}x_e$, indexed by subsets $S\subset E$.
In particular, $\dim \cE_G = 2^{|E|}$ and its Hilbert series is $(1+t)^{|E|}$.

Fix an orientation of~$G$, and let $A_G=(a_{v,e})$ be the corresponding \emph{oriented incidence matrix} of~$G$, with rows indexed by vertices $v\in V$ and columns indexed by edges $e\in E$.
The entries of $A_G$ are given by
\begin{equation}\label{eq:incid}
  a_{v,e}=\begin{cases}
   -1, & \text{if the edge $e$ starts at $v$},\\
    1, & \text{if the edge $e$ ends at $v$},\\
    0, & \text{if $e$ is a loop or is not incident to $v$}.
\end{cases}
\end{equation}

The algebra $\Ae_G$ is defined as the subalgebra of $\cE_G$ generated by the elements
\begin{equation}\label{eq:gener}
  y_v := \sum_{e\in E} a_{v,e}\, x_e \in \cE_G, \qquad v\in V.
\end{equation}
Reversing the orientation of an edge $e\in E$ produces an isomorphic graded algebra, with the isomorphism induced by changing the sign of the corresponding generator $x_e\in\cE_G$.
Consequently, the isomorphism class of $\Ae_G$ as a graded algebra is independent of the choice of orientation.

It was shown in~\cite{PSS} that $\dim \Ae_G$ is equal to the number of spanning forests of~$G$, and that the Hilbert series of $\Ae_G$ is a specialization of the Tutte polynomial of~$G$ enumerating spanning forests by their \emph{external activity}.
In particular, this Hilbert series depends only on the graphical matroid of~$G$.
Furthermore, Nenashev~\cite{Ne} proved that the external zonotopal algebras of two graphs $G_1$ and $G_2$ are isomorphic if and only if the matroids of $G_1$ and $G_2$ are isomorphic.

The algebra $\Ae_G$ also admits a presentation as a quotient
\begin{equation}\label{eq:present}
  \Ae_G \simeq \K[V]/I_G^e,
\end{equation}
where $\K[V]:=\K[z_v \col v\in V]$ is the polynomial algebra in vertex-labeled variables $z_v$, and $I_G^e$ is the ideal generated by powers of linear forms
\begin{equation}\label{eq:power}
  I_G^e := \left( \Bigl( \sum_{v\in S} z_v \Bigr)^{d_S+1} \col \emptyset \neq S \subset V \right).
\end{equation}
Here $d_S$ denotes the number of edges of $G$ with one endpoint in $S$ and the other in $V\setminus S$.

The \emph{central} and \emph{internal} zonotopal algebras $\Ac_G$ and $\Ai_G$ of~$G$, introduced in~\cite{PS,HoRo}, admit analogous presentations as quotients of $\K[V]$ by power ideals $I_G^c$ and $I_G^i$, obtained from~\eqref{eq:power} by replacing the exponent $d_S+1$ with $d_S$ and $d_S-1$, respectively.
More precisely,
\begin{equation*}
  I_G^c := \left( \sum_{v\in V} z_v,\ \Bigl( \sum_{v\in S} z_v \Bigr)^{d_S} \col S \subsetneq V \right),
\end{equation*}
and
\begin{equation*}
  I_G^i := \left( \sum_{v\in V} z_v,\ \Bigl( \sum_{v\in S} z_v \Bigr)^{d_S-1} \col S \subsetneq V \right).
\end{equation*}

These graded algebras also encode essential combinatorial information about~$G$, and their Hilbert series are again specializations of the Tutte polynomial.
In particular, the dimension of the central zonotopal algebra $\Ac_G$ is equal to the number of spanning trees of~$G$.

In this paper, we introduce three new finite-dimensional commutative graded algebras associated with a graph~$G$: the \emph{external} $\BZ_G^e$, the \emph{central} $\BZ_G^c$, and the \emph{internal} $\BZ_G^i$ \emph{bizonotopal} algebras, so named because their definition involves a doubling of the edge set of~$G$.

These algebras are related to the usual zonotopal algebras of~$G$, but they encode substantially different information.

For example, the dimension of the highest-degree component of the external bizonotopal algebra $\BZ_G^e$ is equal to the number of spanning forests of~$G$ (that is, the total dimension of the algebra $\Ae_G$), while the dimension of the top-degree component of the central bizonotopal algebra $\BZ_G^c$ is equal to the number of spanning trees of~$G$ (which coincides with the total dimension of $\Ac_G$).
Both of these numerical invariants are determined by the Tutte polynomial of~$G$.
However, unlike the case of ordinary zonotopal algebras, the Hilbert series of the bizonotopal algebras are not specializations of the Tutte polynomial and contain information about~$G$ not captured by its graphical matroid.

For example, we show that the dimension of the external bizonotopal algebra $\BZ_G^e$ is equal to the number of lattice points in the convex hull of the set of \emph{weak parking functions} of $G$, a concept which we introduce and study in Section~\ref{sec:park}.
For the complete graph $K_n$, weak parking functions coincide with the usual parking functions, and hence $\dim \BZ_{K_n}^e$ is equal to the number of lattice points in the parking-function polytope studied in~\cite{AW}.

Moreover, we prove that the external bizonotopal algebra is a complete invariant of graphs without isolated vertices, and that the central bizonotopal algebra distinguishes graphs in which all vertices have degree at least two.

In contrast to ordinary zonotopal algebras, bizonotopal algebras are monomial, that is, they are isomorphic to quotients of polynomial algebras by monomial ideals.
Somewhat unexpectedly, the Hilbert series of the external and central bizonotopal algebras satisfy a form of a deletion--contraction relation, similar to but subtly different from the classical one satisfied by the Tutte polynomial.

\medskip

We conclude the introduction by briefly explaining the origin of our constructions.
They come from the work of the first author~\cite{Ki} on certain quadratic algebras related to integrable systems and Schubert calculus.
In~\cite[\S 4.3.3]{Ki}, it was observed that the external zonotopal algebra of the complete graph $K_n$ arises as a quotient of the subalgebra generated by additive Dunkl elements in the quadratic algebra $6T_n$, associated with unitary solutions of the classical Yang--Baxter equation.
It was further suggested that an analogous commutative subquotient of the corresponding non-unitary quadratic algebra might also be of interest.
The resulting algebra is precisely the external bizonotopal algebra of the complete graph.

\medskip

The paper is organized as follows.
In Section~\ref{sec:ex}, we introduce and study the external bizonotopal algebras $\BZ_G^e$.
We prove that $\BZ_G^e$ is a complete graph invariant and that it admits a monomial basis in bijection with the set of lattice points in the polytope of partial score vectors of $G$, which, as we also prove, coincides with the convex hull of weak $G$-parking functions.
In Section~\ref{sec:r-bizon}, we introduce the central and internal bizonotopal algebras as members of a larger family of $r$-bizonotopal algebras, with the external algebra corresponding to $r=1$.
We prove that for $r\ge 0$ the Hilbert series of these algebras satisfy a recursion, which we call the \emph{loopy deletion--contraction relation}.
In the external case, we also present an exact sequence that ``categorifies'' this relation.
In Section~\ref{sec:add}, we establish several additional results concerning the central and internal algebras.
In Section~\ref{sec:conclude}, we formulate a number of open questions for further study.
Finally, in the appendix (Section~\ref{sec:num}), we collect computational results for the Hilbert series of the external, central, and internal bizonotopal algebras of complete graphs with at most nine vertices.

\medskip\noindent
\emph{Acknowledgements.}
The second and third authors are grateful to the Beijing Institute for Mathematical Sciences and Applications (BIMSA) for hospitality during June--July 2024, when work on this project began.
The research of the third author was partially supported by the Swedish Research Council grant~2021-04900.

\section{External bizonotopal algebra}\label{sec:ex}

In this section, we define and begin to study the external bizonotopal algebra $\BZ_G^e$ of a graph $G$. We prove that this algebra is monomial and show that it has a basis corresponding to partial score vectors of $G$ or, equivalently, to lattice points in the convex hull of weak $G$-parking functions.

\subsection{Definitions and basic properties}
\subsubsection{Conventions and notation}

All algebras and vector spaces in this paper are defined over a fixed field $\K$ of characteristic zero. We denote by $\Zp$ the set of non-negative integers and by $[n]$ the set $\{1,2,\ldots,n\}$ of the first $n$ natural numbers.

By a graph $G=(V,E)$ in this paper we understand a finite undirected graph  with vertex set $V$ and edge set $E$, possibly with loops and multiple edges.
For a vertex $v\in V$, we denote by $\ell(v)$ the number of loops at $v$ and by $d(v)$ the number of edges connecting $v$ with vertices $u\ne v$.

We define the \emph{degree of a subset} $S\subset V$ of
vertices of $G$ to be the number $\kap_S$ of edges incident to at least one vertex of $S$, i.e.\
\begin{equation}
  \label{eq:degs}
  \kap_S:=\bigl|\{e\in E \col v\in e \text{\ for some\  } v\in S\}\bigr|.
\end{equation}
To simplify notation, for a singleton set $S=\{v\}$, its degree will be denoted simply as $\kap_v$.  Note that, since each loop is counted in $\kap_S$ only once, we have
$$\kap_v=d(v)+\ell(v),$$
which in general is different from the degree of vertex $v$ in the traditional sense.

\subsubsection{Definition of $\BZ_G^e$}
\label{sec:definition}

For a graph  $G=(V,E)$, denote by $\Eh$ the set of its oriented edges (which we also call arrows), i.e.\ edges with all possible orientations.
Let $$s: \Eh\to V$$ be the map sending an oriented edge $e\in \Eh$ to its source $s(e)\in V$ and  let
$$': \Eh\to \Eh$$ be the involution reversing the orientation of $e\in \Eh$.
Thus, the vertex $s(e')$ is the target of the oriented edge $e$.
 The orbits  of  the involution $'$ can be identified with the set $E$ of edges of $G$  and its fixed points correspond to the loops of $G$.
 Thus the number of oriented edges is equal to
 \begin{equation}
   \label{eq:oriented}
   |\Eh|=2|E|-\ell,
 \end{equation}
 where
 $$\ds \ell=\sum_{v\in V}\ell(v)$$
 is the total number of loops in $G$.

 In this notation, an orientation of a graph $G$ is simply a section
 $$ \omega: E\to \Eh $$
 of the projection map
\begin{equation}
\label{eq:proj}
  \pi:\Eh\to E, \ \pi(e):=\{e,e'\}\in E
\end{equation}
forgetting the direction of an arrow $e\in \Eh$. We will also need a slightly more general notion.
\begin{definition}
  A \emph{partial orientation} of a graph $G$ is a choice of orientations for a subset of its edges $S\subset E$, i.e., a section
  $  S\to \Eh $   of the restriction $\pi\vert_{\pi^{-1}(S)}$.
 Equivalently, it can be viewed as a subset of arrows $\Sigma \subset \Eh$ such that the restriction $\pi\vert_\Sigma$ of the  projection~\eqref{eq:proj} is injective.
\end{definition}

Similar to the square-free algebra~\eqref{eq:sqfree}, we consider the \emph{partial orientation algebra}
\begin{equation}
  \label{eq:partial}
  \cEh_G:=\K[\Eh]/( x_e^2, x_ex_{e'} \col  e\in \Eh ),
\end{equation}
the quotient of the polynomial algebra
$$\K[\Eh]:=\K[x_e\col e\in \Eh]$$
in the \emph{arrow variables} $x_e$ by the ideal generated by their squares and the products $x_ex_{e'}$  corresponding to different orientations of the same edge.
The following immediate proposition explains the naming of $\cEh_G$.

\begin{proposition}
  \label{prop:partial}
For each subset $\Sigma\subset \Eh$, consider the monomial
\begin{equation}
  \label{eq:monom}
  \ds x_\Sigma:=\prod_{e\in \Sigma} x_e \in \K[\Eh].
\end{equation}
(1)
The image of $x_\Sigma$ in $\cEh_G$ is nonzero if and only if $\Sigma$ is a partial orientation. Moreover, the images of the monomials $x_\Sigma$ corresponding to distinct partial orientations of $G$ form a basis of the algebra $\cEh_G$.
\\[4pt]
(2)  As a graded algebra, $\cEh_G$ is isomorphic to  the tensor product of $\ell$ copies of the algebra of dual numbers $D:=\K[\varepsilon]/(\varepsilon^2)$   and $|E|-\ell$ copies of the three-dimensional algebra
$T: = \K[\varepsilon,\varepsilon']/(\varepsilon^2, (\varepsilon')^2, \varepsilon\varepsilon')$,
i.e.\
  \begin{equation}
    \label{eq:tensor}
  \cEh_G\simeq (D^{\otimes \ell})\otimes (T^{\otimes (|E|-\ell)}).
  \end{equation}
  \\[4pt]
  (3)
  The dimension of $\cEh_G$ is equal to $2^\ell3^{|E|-\ell}$ and its Hilbert series is equal to $(1+t)^\ell(1+2t)^{|E|-\ell}$.
\end{proposition}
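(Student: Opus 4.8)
The plan is to exploit the fact that the defining ideal of $\cEh_G$ inside $\K[\Eh]$ is generated by \emph{monomials} — the squares $x_e^2$ and the mixed products $x_ex_{e'}$ — so that the entire statement reduces to elementary bookkeeping about monomial quotients, organized edge by edge.

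For part (1), I would invoke the standard fact that for a monomial ideal the images of the \emph{standard monomials} (those not divisible by any generator) form a $\K$-basis of the quotient. A general monomial is divisible by some $x_e^2$ precisely when it is not squarefree, and a squarefree monomial $x_\Sigma$ is divisible by some $x_ex_{e'}$ precisely when $\Sigma$ contains both orientations $e,e'$ of some non-loop edge. Hence the standard monomials are exactly the $x_\Sigma$ for which $\Sigma$ is a partial orientation (i.e.\ $\pi_\Sigma$ is injective), and these images form a basis; in particular $x_\Sigma\neq 0$ if and only if $\Sigma$ is a partial orientation, which is assertion (1).

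For part (2), I would regroup the arrow variables by edge. Writing $L\subset E$ for the set of $\ell$ loops and $M=E\setminus L$ for the $|E|-\ell$ non-loop edges, the polynomial algebra factors as a tensor product
$$\K[\Eh]\;\cong\;\Bigl(\bigotimes_{e\in L}\K[x_e]\Bigr)\otimes\Bigl(\bigotimes_{e\in M}\K[x_e,x_{e'}]\Bigr),$$
since each loop is a fixed point of the involution $'$ and so contributes a single arrow variable, while each non-loop edge contributes the two variables $x_e,x_{e'}$. The crucial observation is that every generator of the defining ideal involves the variables of a single edge only: a loop contributes $x_e^2$, and a non-loop edge contributes $x_e^2,\,x_{e'}^2,\,x_ex_{e'}$. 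Consequently the ideal is the sum of the edge-local ideals, and by the standard description of the quotient of a tensor product of $\K$-algebras by an ideal generated by factor-local relations, $\cEh_G$ is the tensor product of the factor quotients $\K[x_e]/(x_e^2)=D$ and $\K[x_e,x_{e'}]/(x_e^2,(x_{e'})^2,x_ex_{e'})=T$, which is exactly~\eqref{eq:tensor}.

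Finally, part (3) follows from (2) because both dimension and Hilbert series are multiplicative under graded tensor products: $\dim D=2$ with $H_D(t)=1+t$, and $\dim T=3$ with $H_T(t)=1+2t$. Alternatively, (3) can be read directly off the monomial basis of (1), since a partial orientation is an independent per-edge choice — keep or discard for each loop (two options, graded $1+t$), and one of the two orientations or neither for each non-loop edge (three options, graded $1+2t$) — so the generating function of $|\Sigma|$ factors as $(1+t)^{\ell}(1+2t)^{|E|-\ell}$. I do not anticipate a genuine obstacle; the only points requiring care are the clean separation of loops from non-loop edges in the factorization and the verification that the defining ideal really is the sum of the edge-local ideals, so that the tensor-quotient lemma applies.
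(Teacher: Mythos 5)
Your proof is correct and is exactly the argument the paper has in mind: the proposition is stated as ``immediate'' with no written proof, and your edge-by-edge factorization together with the standard-monomial basis for monomial ideals supplies precisely the routine details being omitted.
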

  \qed

\noindent
To each vertex $v\in V$ we associate a degree one element
\begin{equation}\label{eq:gens}
y_v=\sum_{e\in s^{-1}(v)}x_e~,
\end{equation}
in the algebra $\cEh_G$,
i.e.\ the sum of the generators $x_e$ over all arrows with source $v$.
(To avoid notational clutter, we use $x_e$ to denote generators of $\cEh_G$. The difference should be clear from the context.)

\begin{definition} [{\sf The algebra $\BZ^e_G$}]
\label{def:external}
 For  a graph  $G$,  the  subalgebra $\BZ^e_G$ of the partial orientation algebra $\cEh_G$ generated by the elements $y_v, \ v\in V$, is called the \emph{external bizonotopal algebra} of  $G$.
\end{definition}

Clearly, $\BZ^e_G$ is a finite-dimensional graded algebra. It has various connections with ordinary zonotopal algebras.
For example, any choice of an orientation $\omega: E\to \Eh$  of $G$ induces a homomorphism  from $\cEh_G$ onto the edge algebra $\cE_G$~\eqref{eq:sqfree} given by
$$f_\omega: \cEh_G\to \cE_G, \ f_\omega(x_e)
:=\begin{cases} x_{\pi(e)}, & \mathrm{\ if\ } \omega(s(e))=e\\
  -x_{\pi(e)}, &  \mathrm{\ if\ } \omega(s(e))\ne e~,\\
\end{cases}
$$
which induces a surjective homomorphism  $\BZ_G^e\twoheadrightarrow \Ae_G$
onto the usual external zonotopal algebra.

Similarly, the \emph{orientation forgetting homomorphism} induced by $\pi$
$$  \cEh_G\to \cE_G, \ x_e\mapsto  x_{\pi(e)},$$
gives a projection $\BZ_G^e\twoheadrightarrow \cA^+_G$ onto the algebra $\cA^+_G$ constructed similarly to $\Ae_G$ with the oriented incidence matrix~\eqref{eq:incid} replaced by the unoriented one (see~\cite{ShVa}). The algebra $\cA^+_G$ is the external zonotopal algebra of the \emph{even-circle matroid} of $G$~\cite{Do,SP}.

 \subsubsection{Basis of $\BZ_G^e$}
We now describe a basis of $\BZ_G$ and identify it with the set of partial score vectors of $G$.

\begin{definition}[{\sf Partial score vectors}]
Let $G=(V,E)$ be a graph with $n=|V|$ vertices.  An  $n$-tuple $(a_v)_{v\in V}\in \Zp^V$ is called a
\emph{partial score vector} of $G$ if there exists
a partial orientation $\Sigma\subset \Eh$ of $G$ such that $a_v$
is equal to the number of arrows in $\Sigma$ starting at $v$.
\end{definition}

Partial score vectors arise as exponents of basis elements of $\BZ_G^e$.
For a vector $\ba=(a_v)\in \Zp^V$, consider the monomial
$$\ds  y^\ba:=\prod_{v\in V} y_v^{a_v}\in \BZ_G^e$$
in generators \eqref{eq:gens}.

\begin{proposition}\label{lem:score}
\  \begin{enumerate}[(1)]
\item
The monomial $y^\ba\in \BZ_G^e$ is nonzero if and only if $\ba$ is a partial score vector.
\item The elements  $y^\ba$ of $\BZ_G^e$ corresponding to different partial score vectors of $G$ are linearly independent and hence form a basis of $\BZ_G^e$.
\end{enumerate}
\end{proposition}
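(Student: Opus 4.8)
The plan is to compute each monomial $y^\ba$ explicitly in terms of the distinguished basis $\{x_\Sigma\}$ of $\cEh_G$ supplied by Proposition~\ref{prop:partial}(1), and then read off both claims from the resulting formula. First I would expand a single factor $y_v^{a_v}$. Since $y_v=\sum_{e\in s^{-1}(v)}x_e$ and the relation $x_e^2=0$ holds in $\cEh_G$, the multinomial theorem leaves only squarefree terms, each with coefficient $a_v!$, so that
\begin{equation*}
  y_v^{a_v}=a_v!\sum_{\substack{\Sigma_v\subseteq s^{-1}(v)\\ |\Sigma_v|=a_v}}x_{\Sigma_v}.
\end{equation*}
The companion relation $x_ex_{e'}=0$ plays no role at this stage, because within the fiber $s^{-1}(v)$ the only arrow that could meet its own reverse is a loop, which is already annihilated by $x_e^2=0$.

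Next I would multiply these factors over $v\in V$. The fibers $s^{-1}(v)$ are pairwise disjoint, so a choice of subsets $(\Sigma_v)_v$ yields $\prod_v x_{\Sigma_v}=x_\Sigma$ for the disjoint union $\Sigma=\bigsqcup_v\Sigma_v$, giving
\begin{equation*}
  y^\ba=\Bigl(\prod_{v\in V}a_v!\Bigr)\sum_{(\Sigma_v)_v}x_\Sigma.
\end{equation*}
Here the relation $x_ex_{e'}=0$ becomes decisive: a term $x_\Sigma$ survives precisely when no edge of $G$ occurs in $\Sigma$ with both of its orientations, that is, exactly when $\Sigma$ is a partial orientation. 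Since a partial orientation $\Sigma$ is recovered from its tuple via $\Sigma_v=\Sigma\cap s^{-1}(v)$, distinct surviving tuples produce distinct $\Sigma$, and the constraint $|\Sigma_v|=a_v$ says precisely that $\Sigma$ has score vector $\ba$. Thus
\begin{equation}\label{eq:yba-plan}
  y^\ba=\Bigl(\prod_{v\in V}a_v!\Bigr)\sum_{\substack{\Sigma\ \text{partial orientation}\\ \mathrm{score}(\Sigma)=\ba}}x_\Sigma .
\end{equation}

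From~\eqref{eq:yba-plan} part~(1) is immediate: as $\K$ has characteristic zero the scalar $\prod_v a_v!$ is invertible, the monomials $x_\Sigma$ appearing are distinct basis vectors of $\cEh_G$, and so $y^\ba\neq 0$ if and only if the set of partial orientations with score $\ba$ is nonempty, which is exactly the definition of $\ba$ being a partial score vector. For part~(2) I would exploit that the score map partitions the set of partial orientations: for distinct partial score vectors $\ba\neq\bb$ the elements $y^\ba$ and $y^\bb$ are combinations of disjoint families of basis monomials $x_\Sigma$. Being nonzero with pairwise disjoint supports in a basis of $\cEh_G$, the $y^\ba$ are automatically linearly independent. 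Finally, since $\BZ_G^e$ is the commutative subalgebra generated by the $y_v$, it is spanned by the monomials $y^\ba$, $\ba\in\Zp^V$; those with $\ba$ not a partial score vector vanish, and the rest therefore form the asserted basis.

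This is in essence a bookkeeping computation, so the only point genuinely needing care — the ``main obstacle'' — is to rule out cancellation among the surviving terms. Formula~\eqref{eq:yba-plan} is precisely what guarantees this: every surviving $x_\Sigma$ enters with the one common nonzero coefficient $\prod_v a_v!$ and the $x_\Sigma$ are linearly independent, so there is no interference either inside a single $y^\ba$ or, by disjointness of supports, between different $y^\ba$.
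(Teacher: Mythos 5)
Your proof is correct and follows essentially the same route as the paper's: both arguments rest on the observation that each arrow variable $x_e$ occurs in exactly one generator $y_{s(e)}$, so that each basis monomial $x_\Sigma$ of $\cEh_G$ appears (with a positive coefficient) in the expansion of exactly one $y^\ba$, namely the one indexed by the score vector of $\Sigma$. Your version merely makes the common coefficient $\prod_v a_v!$ explicit, which the paper leaves implicit.
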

\begin{proof}
  We start with a simple but crucial observation.
  If $e\in \Eh$ is an arrow with source $v=s(e)$ and target $u=s(e')$, then $y_v$ is the only generator~\eqref{eq:gens} that contains the variable $x_e\in \cEh_G$.
  Therefore, a nonzero monomial $x_\Sigma$~\eqref{eq:monom} in variables $x_e$ (with $\Sigma\subset \Eh$)
appears in the expansion of a monomial  $y^\ba$ exactly when $\ba$ is the partial score vector of the partial orientation $\Sigma$.
  This shows that  $y^\ba\ne 0$ exactly when $\ba$ is a partial score vector. Moreover, since the sets of monomials $x_\Sigma$ appearing in expansions of different nonzero elements $y^\ba$ are disjoint, Proposition~\ref{prop:partial} implies that these elements are linearly independent. This proves (1) and (2).
\end{proof}

\begin{corollary} \label{cor:hilb-e}
\
  \begin{enumerate}[(1)]
  \item
    The dimension of $\BZ_G^e$ is equal to the number of partial score vectors of $G$.
  \item
    The dimension of the $m$-th graded component of $\BZ_G^e$ is equal to the number of partial score vectors $\ba$ of weight $m$, where the \textbf{weight} of $\ba\in \Zp^V$ is
    \[
      |\ba|:=\sum_{v\in V} a_v.
    \]
  \item
    The top degree of $\BZ_G^e$ is equal to $|E|$.
  \item
    The dimension of the top degree component of $\BZ_G^e$ is equal to the number of spanning forests of $G$.
  \end{enumerate}
\end{corollary}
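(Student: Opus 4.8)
First I would dispatch parts (1)--(3) directly from Proposition~\ref{lem:score}. Since the nonzero monomials $y^\ba$ form a basis of $\BZ_G^e$ indexed by the partial score vectors $\ba$, and $y^\ba$ has degree $|\ba|$, counting the whole basis gives (1) and counting the basis elements of a fixed weight $m$ gives (2). For (3) I would observe that the weight of the partial score vector attached to a partial orientation $\Sigma\subset\Eh$ equals $|\Sigma|$; since $\pi_\Sigma$ is injective into $E$ we have $|\Sigma|\le|E|$, with equality exactly when $\Sigma$ orients every edge (each loop being included once). Hence the top degree is $|E|$ and is attained.

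The content is in (4), and here is the plan. By (3) the degree-$|E|$ component has a basis indexed by the weight-$|E|$ partial score vectors, which are precisely the out-degree sequences $(d^+_\omega(v))_{v\in V}$ of the full orientations $\omega$ of $G$. So the goal becomes the purely combinatorial identity that the number of distinct out-degree sequences of $G$ equals the number of (spanning) forests, which by~\cite{PSS} is $\dim\Ae_G$. I would first reduce to the loopless case: each loop is a fixed point of the involution $'$, hence lies in every full orientation and adds $1$ to the out-degree of its vertex, so deleting all loops merely translates the set of out-degree sequences and preserves its size; since no forest contains a loop, the forest count is also unchanged. Assume then that $G$ is loopless.

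For the loopless identity I would pass to the generating function $\prod_{\{u,v\}\in E}(q_u+q_v)=\sum_\omega\prod_{v\in V}q_v^{\,d^+_\omega(v)}$, so that the number of out-degree sequences is the number of monomials in this product, equivalently the number of lattice points of its Newton polytope. That Newton polytope is the graphical zonotope $Z_G=\sum_{\{u,v\}\in E}[e_u,e_v]$, a translate of the zonotope generated by the columns $e_u-e_v$ of the (totally unimodular) oriented incidence matrix. Two classical inputs then finish the job: Hakimi's theorem characterizes out-degree sequences as the nonnegative integer vectors $\ba$ with $|\ba|=|E|$ and $\sum_{v\in S}a_v\ge|E(S)|$ for all $S\subseteq V$ (where $E(S)$ is the set of edges internal to $S$), and these are exactly the integer points of $Z_G$, which is an integral polytope by total unimodularity; consequently every lattice point of $Z_G$ is realized and the number of out-degree sequences equals $|Z_G\cap\Zp^V|$. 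Then Stanley's formula for the number of lattice points of a zonotope, applied to the totally unimodular generators $e_u-e_v$, evaluates this count as the number of independent sets of the graphic matroid, i.e.\ the number of forests $=\dim\Ae_G$.

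The main obstacle is this last step---matching the realized out-degree sequences with \emph{all} lattice points of $Z_G$ and then counting them. Both halves rely crucially on total unimodularity of the incidence matrix: it is what makes Hakimi's inequalities describe exactly the integer points of the integral polytope $Z_G$, and what collapses Stanley's general lattice-point sum (a sum of subdeterminant gcd's over independent subsets) to a plain count of forests. Parts (1)--(3), by contrast, are formal consequences of the monomial basis.
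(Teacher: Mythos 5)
Your parts (1)--(3) are exactly the paper's argument: they follow formally from Proposition~\ref{lem:score} and the observation that a partial orientation has at most $|E|$ arrows, with equality precisely for total orientations. For part (4), however, you take a genuinely different route. The paper identifies the top-degree basis with the set of score vectors of total orientations and then simply invokes the theorem of Kleitman and Winston~\cite{KlWi} that the number of score vectors of a graph equals its number of spanning forests. You instead essentially reprove that theorem: after a correct reduction to the loopless case (each loop is a fixed point of the involution, hence contributes a forced $+1$ to its vertex in every total orientation, translating the set of score vectors without changing its cardinality, while forests ignore loops), you realize the score vectors as the lattice points of the graphical zonotope $\sum_{\{u,v\}\in E}[e_u,e_v]$ via Hakimi's characterization together with total unimodularity of the incidence matrix, and then count those lattice points by Stanley's formula, which for unimodular generators reduces to the number of independent sets of the graphic matroid, i.e.\ forests. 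This is a valid and standard chain of arguments (it is close to Stanley's own zonotopal proof of the Kleitman--Winston count), and it has the merit of being self-contained modulo two classical citations and of exposing the polytope $\Pol_G$ that the paper studies later in Section~\ref{sec:score-polytope}; the cost is that it replaces a one-line citation with several pieces of machinery, each of which (integrality of $Z_G$, the matching of Hakimi's inequalities with the zonotope's facet description, and the collapse of Stanley's subdeterminant sum) would need to be spelled out to make the proof complete.
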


\begin{proof}
Part~(1) follows from Proposition~\ref{lem:score}(2).
The grading of $\BZ_G^e$ is induced from the polynomial algebra $\K[\Eh]$, which implies (2).

Part (3) follows from the fact that a partial orientation $\Sigma\subset \Eh$ can have at most $|E|$ oriented edges. So basis elements of maximal degree correspond to total orientations of $G$ and thus the dimension of the top degree component of $\BZ_G^e$ is equal to the number of usual score vectors of $G$. By the result of Kleitman and Winston~\cite{KlWi}, this number is equal to the number of spanning forests of $G$, thus giving (4).
\end{proof}

\

\begin{example}
For illustration, consider the external bizonotopal algebra of the graph
    \[G=
     \parbox[c]{1.3in}{  \begin{tikzpicture}[scale=.7]
        \coordinate (A) at (0, 0);
        \coordinate (B) at (2, 0);
        \coordinate (C) at (4, 0);
        \draw (A) -- (B);
        \draw (B) -- (C);
        \draw(C) to[out=45,in=135,loop, min distance=1.5cm] (C);
        \filldraw[black] (A) circle (2pt) node[below] {\small 1};
        \filldraw[black] (B) circle (2pt) node[below] {\small 2};
        \filldraw[black] (C) circle (2pt) node[below] {\small 3};
     \end{tikzpicture}}
.
\]

\noindent
The algebra $\BZ_G^e$ is the subalgebra of the partial orientation algebra
$$\cEh_G=\K[x_{12}, x_{21}, x_{23}, x_{32}, x_{33}]/(x_{ij}^2,x_{12}x_{21}, x_{23}x_{32})$$
generated by the elements
    $$
    y_1=x_{12},  \   y_2=x_{21}+x_{23},  \   y_3=x_{32}+x_{33}.
    $$
It has dimension $13$ and top degree $3$.
 Bases of graded components of  $\BZ_G^e$ with corresponding partial score vectors $\ba=(a_1,a_2,a_3)$ and examples of partial orientations producing them are given in the following table. The top degree component has dimension $4$ which is the same as the number of spanning forests in $G$.

\medskip

 \newcommand{\gzzz}{ \parbox[c]{.8in}{\begin{tikzpicture}[scale=.4]
       \coordinate (A) at (0, 0); \coordinate (B) at (2, 0);  \coordinate (C) at (4, 0);
       \draw (C) to[out=135,in=45,loop, min distance=1.5cm] (C);
       \draw (A)--(B)--(C);
       \filldraw[black] (A) circle (1.5pt) node[below] {\small 1};
       \filldraw[black] (B) circle (1.5pt) node[below] {\small 2};
       \filldraw[black] (C) circle (1.5pt) node[below] {\small 3};
     \end{tikzpicture}   }}
 \newcommand{\gozz}{ \parbox[c]{.8in}{\begin{tikzpicture}[scale=.4]
       \coordinate (A) at (0, 0); \coordinate (B) at (2, 0);  \coordinate (C) at (4, 0);
       \draw (C) to[out=135,in=45,loop, min distance=1.5cm] (C);
       \draw[-latex] (B)--(A);       \draw(C)--(B);
       \filldraw[black] (A) circle (1.5pt) node[below] {\small 1};
       \filldraw[black] (B) circle (1.5pt) node[below] {\small 2};
       \filldraw[black] (C) circle (1.5pt) node[below] {\small 3};
     \end{tikzpicture}   }}
 \newcommand{\gzoz}{ \parbox[c]{.8in}{\begin{tikzpicture}[scale=.4]
       \coordinate (A) at (0, 0); \coordinate (B) at (2, 0);  \coordinate (C) at (4, 0);
       \draw (C) to[out=135,in=45,loop, min distance=1.5cm] (C);
       \draw[-latex] (A)--(B);       \draw(C)--(B);
       \filldraw[black] (A) circle (1.5pt) node[below] {\small 1};
       \filldraw[black] (B) circle (1.5pt) node[below] {\small 2};
       \filldraw[black] (C) circle (1.5pt) node[below] {\small 3};
     \end{tikzpicture}   }}
 \newcommand{\gzzo}{ \parbox[c]{.8in}{\begin{tikzpicture}[scale=.4]
       \coordinate (A) at (0, 0); \coordinate (B) at (2, 0);  \coordinate (C) at (4, 0);
       \draw [-latex](C) to[out=135,in=45,loop, min distance=1.5cm] (C);
       \draw (A)--(B);       \draw (C)--(B);
       \filldraw[black] (A) circle (1.5pt) node[below] {\small 1};
       \filldraw[black] (B) circle (1.5pt) node[below] {\small 2};
       \filldraw[black] (C) circle (1.5pt) node[below] {\small 3};
     \end{tikzpicture}   }}
 \newcommand{\gooz}{ \parbox[c]{.8in}{\begin{tikzpicture}[scale=.4]
       \coordinate (A) at (0, 0); \coordinate (B) at (2, 0);  \coordinate (C) at (4, 0);
       \draw (C) to[out=135,in=45,loop, min distance=1.5cm] (C);
       \draw[-latex] (B)--(A);       \draw[-latex] (C)--(B);
       \filldraw[black] (A) circle (1.5pt) node[below] {\small 1};
       \filldraw[black] (B) circle (1.5pt) node[below] {\small 2};
       \filldraw[black] (C) circle (1.5pt) node[below] {\small 3};
     \end{tikzpicture}   }}
 \newcommand{\gozo}{ \parbox[c]{.8in}{\begin{tikzpicture}[scale=.4]
       \coordinate (A) at (0, 0); \coordinate (B) at (2, 0);  \coordinate (C) at (4, 0);
       \draw [-latex](C) to[out=135,in=45,loop, min distance=1.5cm] (C);
       \draw[-latex] (B)--(A);     \draw(C)--(B);
       \filldraw[black] (A) circle (1.5pt) node[below] {\small 1};
       \filldraw[black] (B) circle (1.5pt) node[below] {\small 2};
       \filldraw[black] (C) circle (1.5pt) node[below] {\small 3};
     \end{tikzpicture}   }}
 \newcommand{\gztz}{ \parbox[c]{.8in}{\begin{tikzpicture}[scale=.4]
       \coordinate (A) at (0, 0); \coordinate (B) at (2, 0);  \coordinate (C) at (4, 0);
       \draw (C) to[out=135,in=45,loop, min distance=1.5cm] (C);
       \draw[-latex] (A)--(B);       \draw[-latex] (C)--(B);
       \filldraw[black] (A) circle (1.5pt) node[below] {\small 1};
       \filldraw[black] (B) circle (1.5pt) node[below] {\small 2};
       \filldraw[black] (C) circle (1.5pt) node[below] {\small 3};
     \end{tikzpicture}   }}
\newcommand{\gzoo}{ \parbox[c]{.8in}{\begin{tikzpicture}[scale=.4]
       \coordinate (A) at (0, 0); \coordinate (B) at (2, 0);  \coordinate (C) at (4, 0);
       \draw [-latex](C) to[out=135,in=45,loop, min distance=1.5cm] (C);
       \draw[-latex] (A)--(B);       \draw (C)--(B);
       \filldraw[black] (A) circle (1.5pt) node[below] {\small 1};
       \filldraw[black] (B) circle (1.5pt) node[below] {\small 2};
       \filldraw[black] (C) circle (1.5pt) node[below] {\small 3};
     \end{tikzpicture}   }}
 \newcommand{\gzzt}{ \parbox[c]{.8in}{\begin{tikzpicture}[scale=.4]
       \coordinate (A) at (0, 0); \coordinate (B) at (2, 0);  \coordinate (C) at (4, 0);
       \draw [-latex](C) to[out=135,in=45,loop, min distance=1.5cm] (C);
       \draw(A)--(B);       \draw[-latex] (B)--(C);
       \filldraw[black] (A) circle (1.5pt) node[below] {\small 1};
       \filldraw[black] (B) circle (1.5pt) node[below] {\small 2};
       \filldraw[black] (C) circle (1.5pt) node[below] {\small 3};
     \end{tikzpicture}   }}
 \newcommand{\gooo}{ \parbox[c]{.8in}{\begin{tikzpicture}[scale=.4]
       \coordinate (A) at (0, 0); \coordinate (B) at (2, 0);  \coordinate (C) at (4, 0);
       \draw [-latex](C) to[out=135,in=45,loop, min distance=1.5cm] (C);
       \draw[-latex] (B)--(A);       \draw[-latex] (C)--(B);
       \filldraw[black] (A) circle (1.5pt) node[below] {\small 1};
       \filldraw[black] (B) circle (1.5pt) node[below] {\small 2};
       \filldraw[black] (C) circle (1.5pt) node[below] {\small 3};
     \end{tikzpicture}   }}
 \newcommand{\gozt}{ \parbox[c]{.8in}{\begin{tikzpicture}[scale=.4]
       \coordinate (A) at (0, 0); \coordinate (B) at (2, 0);  \coordinate (C) at (4, 0);
       \draw [-latex](C) to[out=135,in=45,loop, min distance=1.5cm] (C);
       \draw[-latex] (B)--(A);       \draw[-latex] (B)--(C);
       \filldraw[black] (A) circle (1.5pt) node[below] {\small 1};
       \filldraw[black] (B) circle (1.5pt) node[below] {\small 2};
       \filldraw[black] (C) circle (1.5pt) node[below] {\small 3};
     \end{tikzpicture}   }}
 \newcommand{\gzto}{ \parbox[c]{.8in}{\begin{tikzpicture}[scale=.4]
       \coordinate (A) at (0, 0); \coordinate (B) at (2, 0);  \coordinate (C) at (4, 0);
       \draw [-latex](C) to[out=135,in=45,loop, min distance=1.5cm] (C);
       \draw[-latex] (A)--(B);       \draw[-latex] (C)--(B);
       \filldraw[black] (A) circle (1.5pt) node[below] {\small 1};
       \filldraw[black] (B) circle (1.5pt) node[below] {\small 2};
       \filldraw[black] (C) circle (1.5pt) node[below] {\small 3};
     \end{tikzpicture}   }}
 \newcommand{\gzot}{ \parbox[c]{.8in}{\begin{tikzpicture}[scale=.4]
       \coordinate (A) at (0, 0); \coordinate (B) at (2, 0); \coordinate (C) at (4, 0);
       \draw [-latex](C) to[out=135,in=45,loop, min distance=1.5cm] (C);
       \draw[-latex] (A)--(B);       \draw[-latex] (B)--(C);
       \filldraw[black] (A) circle (1.5pt) node[below] {\small 1};
       \filldraw[black] (B) circle (1.5pt) node[below] {\small 2};
       \filldraw[black] (C) circle (1.5pt) node[below] {\small 3};
     \end{tikzpicture}   }}

 \begin{tabular}{|l | l |l |c |}
   \hline
   n  & Basis monomial $y^\ba\in (\BZ_G^{e})^{(n)}$& Vector $\ba$ & Orientation $ \Sigma$
\\[2pt]
   \hline
   0 & $1$ & $(0,0,0)$ & \gzzz\\
   \hline
   1 & $ y_1=x_{12}$ & $(1,0,0)$ & \gozz\\
   1 & $ y_2=x_{21}+x_{23}$ & $(0,1,0)$ &  \gzoz\\
   1 & $ y_3=x_{32}+x_{33}$ & $(0,0,1)$ & \gzzo\\
   \hline
   2 & $ y_1y_2=x_{12}x_{23}$ & $(1,1,0)$ & \gooz\\
   2 & $ y_1y_3=x_{12}x_{32}+x_{12}x_{33} $ & $(1,0,1)$ & \gozo\\
   2 & $  y_2^2=2x_{21}x_{23}$ & $(0,2,0)$ & \gztz\\
   2 & $ y_2y_3=x_{21}x_{32}+x_{21}x_{33}+x_{23}x_{33}$ & $(0,1,1)$ & \gzoo\\
   2 & $ y_3^2=2x_{32}x_{33}$ & $(0,0,2)$ &  \gzzt\\
   \hline
   3 & $ y_1y_2y_3=x_{12}x_{23}x_{33}$ & $(1,1,1)$ & \gooo\\
   3 & $ y_1y_3^2 = 2x_{12}x_{32}x_{33}$ & $(1,0,2)$ & \gozt\\
   3 & $ y_2^2y_3=2x_{21}x_{23}x_{33}$ & $(0,2,1)$ & \gzto\\
   3 & $ y_2y_3^2=2x_{21}x_{32}x_{33}$ & $(0,1,2)$ & \gzot\\
   \hline
 \end{tabular}

\end{example}

\

\subsubsection{External bizonotopal algebras distinguish graphs}
\label{sec:isom}
Unlike the usual external zonotopal algebras, which depend only on the cycle matroid of the graph and therefore do not distinguish nonisomorphic graphs with the same matroid (cf.~\cite{Ne}), external bizonotopal algebras are complete graph invariants.

\begin{theorem}\label{thm:isom}
  Let $G_1$ and $G_2$ be graphs without isolated vertices and let $\BZ_{G_1}^e$ and $\BZ_{G_2}^e$ be their external bizonotopal algebras.
  Then the following are equivalent:
  \begin{enumerate}[(1)]
  \item The graphs $G_1$ and $G_2$ are isomorphic.
  \item The $\Zp$-graded algebras $\BZ_{G_1}^e$ and $\BZ_{G_2}^e$ are isomorphic.
  \item The algebras $\BZ_{G_1}^e$ and $\BZ_{G_2}^e$ are isomorphic (as ungraded algebras).
  \end{enumerate}
\end{theorem}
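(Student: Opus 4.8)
The plan is to prove the three implications $(1)\Rightarrow(2)$, $(3)\Rightarrow(2)$, and $(2)\Rightarrow(1)$, since $(2)\Rightarrow(3)$ is immediate (a graded isomorphism is in particular an algebra isomorphism). For $(1)\Rightarrow(2)$: a graph isomorphism $\sigma\colon G_1\to G_2$ induces a bijection $\Eh_1\to\Eh_2$ commuting with $s$ and with the reversal involution, hence an isomorphism of polynomial algebras $\K[\Eh_1]\simeq\K[\Eh_2]$ respecting the defining relations of~\eqref{eq:partial}; this descends to $\cEh_{G_1}\simeq\cEh_{G_2}$ carrying $y_v\mapsto y_{\sigma(v)}$, and therefore restricts to a graded isomorphism $\BZ^e_{G_1}\simeq\BZ^e_{G_2}$. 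For $(3)\Rightarrow(2)$ I would use the canonical $\fm$-adic filtration: each $\BZ^e_G$ is a finite-dimensional connected graded algebra \emph{generated in degree one}, so its unique maximal ideal is $\fm=\bigoplus_{i\ge1}(\BZ^e_G)_i$, and $\fm^k=\bigoplus_{i\ge k}(\BZ^e_G)_i$; consequently the associated graded algebra $\mathrm{gr}_\fm\BZ^e_G=\bigoplus_k \fm^k/\fm^{k+1}$ is canonically isomorphic to $\BZ^e_G$ as a graded algebra. Since any algebra isomorphism sends the maximal ideal to the maximal ideal, it induces an isomorphism of the associated graded algebras; composing gives a graded isomorphism $\BZ^e_{G_1}\simeq\BZ^e_{G_2}$. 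Thus everything reduces to the graded reconstruction $(2)\Rightarrow(1)$.

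The engine for $(2)\Rightarrow(1)$ is an intrinsic nilpotency invariant on the degree-one part. For a graded isomorphism $N(z):=\max\{k:z^k\neq 0\}$ is preserved, so I would compute it explicitly. Writing $z=\sum_{v}c_v y_v\in(\BZ^e_G)_1$ as $z=\sum_{e\in\Eh}c_{s(e)}x_e$ and using the relations of $\cEh_G$, one gets $z^k=k!\sum_{\Sigma}\bigl(\prod_{e\in\Sigma}c_{s(e)}\bigr)x_\Sigma$, summed over partial orientations $\Sigma$ with $|\Sigma|=k$. Because the $x_\Sigma$ are linearly independent (Proposition~\ref{prop:partial}) there is no cancellation, so $z^k\neq0$ iff there is a partial orientation of size $k$ all of whose arrows have tails in $\mathrm{supp}(z):=\{v:c_v\neq0\}$. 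The maximal such size is the number of edges incident to $\mathrm{supp}(z)$, giving the clean formula $N(z)=\kap_{\mathrm{supp}(z)}$. Given the distinguished coordinate lines $\K y_v$, reconstruction is then pure inclusion--exclusion: $\kap_v=N(y_v)$, $\kap_{\{u,v\}}=N(y_u+y_v)$, the edge multiplicities are $m_{uv}=\kap_u+\kap_v-\kap_{\{u,v\}}$, and the loop numbers are $\ell(v)=\kap_v-\sum_{u\neq v}m_{uv}$; these data determine $G$ up to isomorphism.

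The main obstacle is therefore isolating the coordinate lines $\K y_v\subset(\BZ^e_G)_1$ from the intrinsic data alone. My approach is to exploit that each level set $\{z:N(z)\le k\}=\{z:\kap_{\mathrm{supp}(z)}\le k\}$ is a finite union of coordinate subspaces $L_W=\langle y_v:v\in W\rangle$ (because $\kap$ is monotone, the collection $\{W:\kap_W\le k\}$ is downward closed). A graded isomorphism carries level sets to level sets and maximal linear subspaces to maximal linear subspaces, hence carries coordinate subspaces to coordinate subspaces; intersecting the pieces across all $k$ recovers the Boolean lattice of coordinate subspaces, whose atoms are the coordinate lines. The subtlety, which is where I expect the real work to lie, is that $\kap$ is only weakly (not strictly) monotone, so a coordinate line can fail to be visible as a maximal subspace of a level set. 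A direct analysis shows this happens \emph{only} for isolated-edge components: $\K y_u$ and $\K y_v$ become indistinguishable exactly when $\{u,v\}$ spans a $K_2$-component (the corresponding tensor factor is $\K[x,y]/(x,y)^2$, with $\fm^2=0$ and $GL_2$-symmetry). I would handle these separately using the tensor decomposition $\BZ^e_G\simeq\bigotimes_{C}\BZ^e_C$ over connected components, detecting the number of such degenerate factors as an invariant; on the complement the coordinate lines are recovered as above, the induced bijection on vertices preserves every $\kap_W$, and hence yields the desired graph isomorphism. If plain nilpotency proves insufficient to separate the remaining cases, I would enrich $N$ with the Hilbert series of the principal ideals $(z)$ and of the two-generated subalgebras $\langle y_u,y_v\rangle$, whose Hilbert series encode $\kap_u,\kap_v$ and $m_{uv}$ directly.
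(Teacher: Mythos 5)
Your reductions $(1)\Rightarrow(2)$ and $(3)\Rightarrow(2)$ are fine (the paper simply cites Bell--Zhang for the latter; your $\fm$-adic filtration argument is a correct self-contained substitute), and your key computation $N(z)=\kap_{\mathrm{supp}(z)}$ together with the inclusion--exclusion formulas $m_{uv}=\kap_u+\kap_v-\kap_{\{u,v\}}$ and $\ell(v)=\kap_v-\sum_{u\ne v}m_{uv}$ is exactly the engine the paper uses. The gap is in the step where you isolate the coordinate lines $\K y_v$ from the level sets of $N$. Your claim that this ``fails only for isolated-edge components'' is false. Take the path $P_3$ with vertices $u,v,w$ and edges $uv$, $vw$: here $\kap_v=\kap_{\{u,v\}}=\kap_{\{u,v,w\}}=2$, so $\{v\}$ is never a maximal element of $\{W:\kap_W\le k\}$ for any $k$, and $L_{\{v\}}$ is not an intersection of the recoverable subspaces $\{0,\K y_u,\K y_w, L_V\}$. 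Worse, the line $\K y_v$ is genuinely not intrinsic: the substitution $z_v\mapsto z_v+z_u$, $z_u\mapsto z_u$, $z_w\mapsto z_w$ preserves all defining relations ($z_u^2$, $z_w^2$, $z_v^3$, $z_uz_v^2$, $z_wz_v^2$, $z_uz_vz_w$) and hence is a graded automorphism of $\BZ^e_{P_3}$ moving $\K y_v$. The same phenomenon occurs whenever some vertex $u$ has no loops and all its edges go to a single other vertex $v$, so no amount of intersecting level-set components can recover the Boolean lattice of coordinate subspaces, and your proposed repair (splitting off $K_2$ tensor factors) does not address these cases. Your fallback sentence about enriching $N$ with Hilbert series of principal ideals is not an argument.

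The paper circumvents exactly this obstruction by \emph{not} trying to recover the coordinate lines. It chooses a basis $(u_1,\dots,u_n)$ of the degree-one part minimizing $\sum_i\ord(u_i)$, shows this minimum equals $2|E|-\ell$ (each non-loop edge forces at least two basis vectors to involve it, and the standard basis $(y_v)$ achieves the bound), and then, after reordering so the diagonal coefficients are nonzero, deduces $\ord(u_i)=\ord(y_{v_i})$ and that $u_i$ has the \emph{same edge-support} as $y_{v_i}$ even though $u_i$ need not be proportional to $y_{v_i}$. That weaker conclusion is all the inclusion--exclusion step needs. To close your gap you would need either this minimization idea or some other device that tolerates the non-uniqueness of the coordinate lines.
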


\begin{proof}
  Clearly, if $G_1$ and $G_2$ are isomorphic, then $\BZ_{G_1}^e\simeq \BZ_{G_2}^e$ as graded algebras. The equivalence of (2) and (3) is a known result (see, e.g.,~\cite{BZ}). It remains to prove that (2)$\Rightarrow$(1).

  \medskip

For a nilpotent element $u$ of an algebra, let $\ord(u)$ denote the smallest $m\in \Zp$ such that $u^{m+1}=0$.

If $u\in (\BZ_G^e)^{(1)}$ is a degree one element of the algebra $\BZ_G^e$ of a graph $G=(V,E)$, then Proposition~\ref{prop:partial} implies that $\ord(u)$ is equal to the number of distinct (unoriented) edges $\bar{e}=\{e,e'\}\in E$ whose arrow variables appear with a nonzero coefficient in the expansion of $u$ in the basis $(x_e)$ of $\cEh_G$.

If  $G_1=(V_1,E_1)$ and $G_2=(V_2,E_2)$ are graphs without isolated vertices such that $\BZ_{G_1}^e\simeq  \BZ_{G_2}^e$, then
$$|V_1|=\dim (\BZ_{G_1}^e)^{(1)}= \dim (\BZ_{G_2}^e)^{(1)} = |V_2|.$$

Let $(u_1,u_2,\ldots, u_n)$ be a basis of the space $(\BZ_{G_1}^e)^{(1)}$ minimizing the sum $\ds \sum_{i=1}^n \ord(u_i)$.
We claim that
\[
\ord(u_1)+\ord(u_2)+\ldots+\ord(u_n) =
\dim (\cEh_{G_1})^{(1)}=2|E_1|-\ell_{G_1},
\]
where $\ell_G$ denotes the number of loops in $G$.

Indeed, every arrow variable $x_e$ ($e\in \Eh$) must occur with a nonzero coefficient in the expansion of at least one basis element
$u_i$. Together with~\eqref{eq:oriented} and Proposition~\ref{prop:partial}, this gives the lower bound
$$\ds \sum_i \ord(u_i) \ge 2|E_1|-\ell_{G_1}.$$
On the other hand, for the standard basis $(y_v)_{v\in V_1}$~\eqref{eq:gens} of $(\BZ_{G_1}^e)^{(1)}$ we have
\[   \sum_{v\in V_1} \ord(y_v)=\sum_v \kap_v= 2|E_1|-\ell_{G_1},
\]
so by minimality of $(u_i)$ we have the opposite inequality
\[ \sum_i \ord(u_i)\le 2|E_1|-\ell_{G_1}.\]

Since $u_1,\ldots,u_n$ forms a basis of $(\BZ_{G_1}^e)^{(1)}$, there is an ordering  of the vertex set $V_1=(v_1,\ldots,v_n)$ such that in the expansion
\[u_i=c_{i,1}y_{v_1}+\ldots+c_{i,n}y_{v_n}\]
the diagonal coefficients $c_{i,i}$ are nonzero.
Then $\ord(u_i)\ge \ord(y_{v_i})$ because all edge variables occurring in the expansion of $y_{v_i}$ must also occur in the expansion $u_i$.

 Since
$$
\sum_i \ord(u_i)=\sum_i \ord(y_{v_i})=2|E_1|-\ell_{G_1},$$
we have
$\ord(u_i)=\ord(y_{v_i})$ for all $i$.
Thus the nonzero terms in the expansion of $u_i$ in the variables $x_e$ involve precisely the unoriented edges that appear in the expansion of  $y_{v_i}$, namely the edges incident to $v_i$.
It follows that for a generic $\lambda\in \K$ (i.e.\ for all $\lambda$
outside a finite subset of $\K$) the number of edges between $v_i$ and $v_j$ is equal
\[  \ord(u_i)+\ord(u_j)-\ord(u_i+\lambda u_j).
\]

Knowing $\kappa_{v_i}$ and, for each $j\ne i$, the number of edges between $v_i$ and $v_j$, we can determine the number of loops at  $v_i$.  Therefore  we can reconstruct the graph $G_1$ up to isomorphism from the algebra $\BZ_{G_1}^e$.
Applying the same reconstruction to $\BZ_{G_2}^e$ recovers the graph $G_2$, and hence the isomorphism $\BZ_{G_1}^e\simeq \BZ_{G_2}^e$
implies that the graphs $G_1$ and $G_2$ are isomorphic.
\end{proof}

\subsection{Defining relations of $\BZ_G^e$}
We now describe the relations between the generators of $\BZ_G^e$ given by~\eqref{eq:gens} which, among other things, show that $\BZ_G^e$ is a monomial algebra.

\medskip

Partial score vectors admit a convenient characterization in terms of the degrees $\kap_S$ of subsets of vertices, see~\eqref{eq:degs}.

\begin{proposition}
 \label{prop:scores}
A vector $\ba=(a_v)_{v\in V}\in \Zp^V$ is a partial score vector of a graph $G=(V,E)$ if and only if for every subset $S\subset V$ we have
\begin{equation}
  \label{eq:score-deg}
  \sum_{v\in S}a_v\le \kap_S.
\end{equation}
\end{proposition}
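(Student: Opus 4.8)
The statement is an ``if and only if'', and I would prove the two directions separately; the forward direction is an easy counting argument, while the real content is in the converse, which I expect to follow from Hall's marriage theorem.

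For the forward (``only if'') direction, suppose $\ba$ is realized by a partial orientation $\Sigma\subset\Eh$, so that $a_v$ counts the arrows $e\in\Sigma$ exiting from $v$, i.e.\ with $s(e)=v$. Fix $S\subset V$. Then $\sum_{v\in S}a_v$ equals the number of arrows $e\in\Sigma$ with $s(e)\in S$. Because $\Sigma$ is a partial orientation, the projection $\pi$ is injective on $\Sigma$, so these arrows map to \emph{distinct} unoriented edges, each of which is incident to a vertex of $S$ and hence counted by $\kap_S$. This immediately yields $\sum_{v\in S}a_v\le\kap_S$, taking care that an oriented loop at a vertex of $S$ contributes one to both sides.

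For the converse I would recast the existence of a realizing partial orientation as a bipartite matching problem. Build a bipartite graph whose left vertices are ``tokens'', with $a_v$ tokens attached to each $v\in V$ (so $\sum_v a_v$ tokens in all), and whose right vertices are the edges of $G$; join a token of $v$ to an edge $e$ precisely when $v$ is incident to $e$ (for a loop at $v$, only the tokens of $v$ are joined to it). A matching saturating all tokens assigns to each token of $v$ a distinct edge incident to $v$; orienting each chosen edge so that it exits the vertex to whose token it is matched produces a partial orientation $\Sigma$ (distinct edges, each oriented once, loops included) with out-degree exactly $a_v$ at every $v$, i.e.\ a partial orientation realizing $\ba$. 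Thus it suffices to produce such a matching.

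Existence of a saturating matching is governed by Hall's condition: every set $T$ of tokens must satisfy $|N(T)|\ge|T|$, where $N(T)$ is the set of edges adjacent to some token of $T$. The key simplification, which I regard as the only genuine step, is that it is enough to verify this for the ``saturated'' token sets $T_S$ consisting of all tokens at vertices of a subset $S\subset V$: for an arbitrary $T$, letting $S$ be the set of vertices carrying a token of $T$, one has $N(T)=N(T_S)$ and $|T|\le|T_S|$, so Hall's condition for $T_S$ forces it for $T$. For $T_S$ we have $|T_S|=\sum_{v\in S}a_v$, while $N(T_S)$ is exactly the set of edges incident to $S$, so $|N(T_S)|=\kap_S$. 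Hence Hall's condition is equivalent to the full system $\sum_{v\in S}a_v\le\kap_S$ for all $S\subset V$, which is precisely the hypothesis; the matching therefore exists and $\ba$ is a partial score vector. The main obstacle is simply recognizing the converse as this bipartite matching problem and performing the reduction of Hall's condition to vertex-subset token sets; once that is done, the hypothesis supplies exactly the required inequalities, with loops handled by noting they are incident to a single vertex and carry one outgoing arrow.
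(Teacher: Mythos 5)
Your proposal is correct and follows essentially the same route as the paper: the forward direction by the same counting of arrows, and the converse by Hall's marriage theorem applied to the bipartite graph with $a_v$ tokens at each vertex $v$ (the paper's pairs $(v,i)$) on one side and the edges of $G$ on the other. Your explicit reduction of Hall's condition to the saturated token sets $T_S$ fills in a step the paper leaves implicit, but the argument is the same.
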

\begin{proof}
  If $\ba=(a_v)$ is a partial score vector corresponding to a partial orientation $\Sigma\subset \Eh$, then~\eqref{eq:score-deg} holds because each arrow $e\in \Sigma$ contributes to exactly one component $a_v$.

 Conversely, given a vector $\ba \in \Zp^V$ satisfying~\eqref{eq:score-deg}, we will construct a partial orientation producing $\ba$ by applying Hall's marriage theorem.
Let $\mathcal{G}$ be a bipartite graph with vertex set consisting of two parts,
$$
A=\{(v,i)\col v\in V, \mathrm{\ such\ that\ } a_v\ne 0, \ i\in\{1,\ldots,a_v\}\}\subset V\times \Zp,
$$
and
$B=E$, the set of edges of $G$. Vertices $(v,i)\in A$ and $e\in B$ are connected by an edge in $\mathcal{G}$ if $v$ is incident to $e$. The inequality~\eqref{eq:score-deg} implies that the graph $\mathcal{G}$ satisfies the condition of Hall's marriage theorem. Therefore there exists a perfect matching $g: A\hookrightarrow B$. This matching produces a partial orientation of $G$ by orienting each edge $e=g(v,i)$ in the image $g(A)$ so that it exits the vertex $v$, and leaving all edges not in $g(A)$ unoriented. The score vector corresponding to this partial orientation is $\ba$.
\end{proof}

\medskip

We denote by $\fM_S$ the set of monomials in $\K[V]:=\K[z_v\col v\in V]$ of total degree $\kap_S+1$ involving only the variables $z_v$ with $v\in S$. In other words,
\begin{equation}
  \label{eq:relat}
  \fM_S=\Bigl\{\prod_{v\in S}z_v^{a_v}\col \sum_{v\in S} a_v=\kap_S+1\Bigr\}.
\end{equation}

\begin{theorem}\label{th:monom}
Given a graph $G=(V,E)$,
 let    $$f_G:\K[V]\to \BZ_G^e, \ z_v\mapsto y_v,$$
be the surjective homomorphism sending free generators of $\K[V]$ to generators~\eqref{eq:gens} of $\BZ_G^e$.
Then $\Ker(f_G)=\fI_G$, where
  \begin{equation}
    \label{eq:ideal}
    \fI_G:=\Big(\bigcup_{ \emptyset \neq S\subset V} \fM_S\Big) \subset  \K[V]
  \end{equation}
is the ideal  generated by the monomials in $\fM_S$ for all nonempty subsets $S\subset V$. In other words, the collection of monomials from $\fM_S$ is the set of defining relations between the generators $y_v$ of the external zonotopal algebra $\BZ_G^e$.
\end{theorem}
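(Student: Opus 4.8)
The plan is to establish the two inclusions $\fI_G\subseteq\Ker(f_G)$ and $\Ker(f_G)\subseteq\fI_G$ separately, working throughout at the level of monomials and leaning on the two facts already in hand: the characterization of partial score vectors by the inequalities \eqref{eq:score-deg} (Proposition~\ref{prop:scores}) and the monomial basis $\{y^\ba\}$ indexed by partial score vectors (Proposition~\ref{lem:score}). I will write $z^\ba:=\prod_{v\in V}z_v^{a_v}$ for $\ba\in\Zp^V$, so that $f_G(z^\ba)=y^\ba$, and I will use that $\fI_G$, being generated by monomials, is a monomial ideal and hence is spanned as a vector space by those $z^\ba$ divisible by at least one element of some $\fM_S$.

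For $\fI_G\subseteq\Ker(f_G)$ it suffices to check that every generator is killed by $f_G$. A generator $m\in\fM_S$ has the form $z^\bb$ with $\bb$ supported on $S$ and $\sum_{v\in S}b_v=\kap_S+1$, so $f_G(m)=y^\bb$. Since $\sum_{v\in S}b_v=\kap_S+1>\kap_S$, the vector $\bb$ violates \eqref{eq:score-deg} for the set $S$, hence is not a partial score vector by Proposition~\ref{prop:scores}, and therefore $y^\bb=0$ by Proposition~\ref{lem:score}(1). As $\Ker(f_G)$ is an ideal containing every generator of $\fI_G$, this inclusion follows.

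For the reverse inclusion I would first pin down $\Ker(f_G)$ as a span of monomials. Because $\{z^\ba\}_{\ba\in\Zp^V}$ is a basis of $\K[V]$ and $f_G(z^\ba)=y^\ba$ is a basis vector of $\BZ_G^e$ when $\ba$ is a partial score vector and $0$ otherwise, the linear independence of the $y^\ba$ over score vectors forces any element of $\Ker(f_G)$ to have vanishing coefficient on every score monomial. Thus $\Ker(f_G)$ is exactly the span of $\{z^\ba\col\ba\text{ is not a partial score vector}\}$, and it remains to show each such monomial lies in $\fI_G$. If $\ba$ is not a score vector, Proposition~\ref{prop:scores} produces a nonempty $S$ with $\sum_{v\in S}a_v\ge\kap_S+1$; I then choose integers $0\le b_v\le a_v$ for $v\in S$ with $\sum_{v\in S}b_v=\kap_S+1$, so that $\prod_{v\in S}z_v^{b_v}\in\fM_S$ divides $z^\ba$ and hence $z^\ba\in\fI_G$.

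Since the whole argument is a bookkeeping translation of Propositions~\ref{prop:scores} and~\ref{lem:score}, no single step should be a genuine obstacle. The only points needing care are the elementary truncation claim that $\sum_{v\in S}a_v\ge\kap_S+1$ permits a choice of exponents $b_v\le a_v$ summing to exactly $\kap_S+1$ (which holds by lowering the $a_v$ one unit at a time), and the preliminary observation that $\fI_G$ is a monomial ideal, which is precisely what reduces both inclusions to statements about individual basis monomials.
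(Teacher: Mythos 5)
Your proof is correct and follows essentially the same route as the paper's: both inclusions are reduced to statements about individual monomials via the monomial basis $\{y^{\ba}\}$ of Proposition~\ref{lem:score} and the inequality characterization of Proposition~\ref{prop:scores}. The only cosmetic difference is that you derive the vanishing of the generators of $\fI_G$ from those two propositions, whereas the paper re-runs the arrow-variable counting argument directly; the content is identical.
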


\begin{proof} First, we show that for any nonempty subset  $S\subset V$, every monomial $\ds \prod_{v\in S}z_v^{a_v}\in \fM_S$ vanishes in $\BZ_G^e$, i.e.\ belongs to $\Ker f_G$. Indeed, consider the
  expansion of
  \[ f_G\Bigl(\prod_{v\in S}z_v^{a_v}\Bigr)=\prod_{v\in S}y_v^{a_v}\in \cEh_G
  \]
 in arrow variables $x_e$, the generators of the partial orientation algebra $\cEh_G$. Let $x_\Sigma\in \cEh_G$ be some monomial in this expansion.
 By Proposition~\ref{prop:partial} and equation~\eqref{eq:gens}, if $x_\Sigma\ne 0$ then $\Sigma\subset \Eh$ is a partial orientation of $G$ with all arrows $e\in \Sigma$ starting or ending in $S$.
 But  $|\Sigma|=\deg x_\Sigma=\kap_S+1$, which implies that the restriction $\pi|_{\Sigma}$ is not injective. Therefore, $\Sigma$ is not a partial orientation and, hence, every term in the expansion of $\ds \prod_{v\in S}y_v^{a_v}$ vanishes.
This shows that $\fI_G\subset \Ker f_G$ and thus we have a surjection $$\K[V]/\fI_G\twoheadrightarrow \K[V]/\Ker f_G\simeq \BZ_G^e.$$

\medskip

To prove the opposite inclusion $\fI_G\supset \Ker f_G$, it now suffices to show that $f_G$ is injective on the complement $\fI_G^c$ of $\fI_G$, i.e.\ on the set of monomials not contained in $\fI_G$. If  $\mu=\ds \prod_{v\in V}z_v^{a_v}\in \fI_G^c$ is such a monomial, then, by~\eqref{eq:relat}, for every nonempty $S\subset V$ we have $\ds \sum_{v\in S}a_v\le \kap_S$.
By Proposition~\ref{prop:scores}, this means that the exponent vector $\ba=(a_v)_{v\in V}$ of $\mu$ is a partial score vector of $G$ and, by Proposition~\ref{prop:partial},  $\ds f_G(\mu)=\prod_{v\in V}y_v^{a_v}=y^\ba$ is a basis element of $\BZ_G^e$.
Thus the images $f_G(\mu)$ of the monomials in $\fI_G^c$ are linearly independent.
\end{proof}

\subsection{Score vector polytope}
\label{sec:score-polytope}

Proposition~\ref{prop:scores} shows that partial score vectors are the lattice points of a convex polytope with integral vertices.
By Proposition~\ref{lem:score}, these lattice points also label elements in a basis of $\BZ_G^e$.
In this subsection we study the vertices (extreme points) of this polytope.

\begin{definition}
  For a graph $G=(V,E)$ with $n=|V|$ vertices, we define its \emph{score vector polytope} $\Pol_G$ as the set of points in the $n$-dimensional space $\R^V$ satisfying the inequalities~\eqref{eq:score-deg}:
  \begin{equation}
    \label{eq:polytope}
    \Pol_G:=\{(a_v)_{v\in V}\in \R^V_{\ge 0}  \col \sum_{v\in S}a_v\le \kap_S, \mathrm{\ for \ all\ } S\subset V\}.
  \end{equation}
\end{definition}

\medskip
The set of vertices (extreme points) of the polytope $\Pol_G$ can be characterized in several equivalent ways as described in the following theorem.
\begin{theorem}
\label{th:vertices}
For a graph $G=(V,E)$ and a vector
$\ba=(a_v)_{v\in V}\in \Zp^V$, the following conditions are equivalent:
\begin{itemize}
\item[(1)] \
  $\ba$ is a vertex of the score vector polytope $\Pol_G$ of $G$.
\item
[(2)] \ $\ba$ is of the form
$\ba^{\Pi,m}=(a_v)_{v\in V}$, where  $\Pi=(v_1,\ldots,v_n)$ is a linear ordering of $V$, \  $m\in \{0,1,\ldots,n\}$,
and
\begin{equation}
  \label{eq:vertex2}
a_v=\begin{cases}
  0, & \text{if $v=v_i$ for $i\le m$};\\
  \textrm{number of edges between}\ v=v_i \text{ and }  \{v_1,\ldots,v_i\}, & \text{if $i > m$}.
\end{cases}
\end{equation}
\item [  (3)] \
  $\ba$ is of the form
  $\ba^J=(a_v)$,
where $J=(v_1,\ldots,v_r)$ is  a linearly ordered subset of $V$
and
\begin{equation}
\label{eq:vertex3}
  a_v=\begin{cases}
  \textrm{number of edges between}\ v_i \textrm{\ and\ }  V\setminus\{v_1,\ldots,v_{i-1}\}, & \text{if $v=v_i\in J$};\\
    0, & \text{if $v\not\in J$}.
  \end{cases}
\end{equation}
\item [  (4)] \
  $\ba$ is a partial score vector of $G$  corresponding to a unique partial orientation of $G$.
\end{itemize}
\end{theorem}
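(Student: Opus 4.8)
The plan is to recognize the score vector polytope $\Pol_G$ as the independence polytope of an integral polymatroid, to read off the shape of its vertices from the greedy algorithm, and then to match that shape against the three combinatorial descriptions (2)--(4). The structural input for everything is that the set function $S\mapsto\kap_S$ is monotone, normalized by $\kap_\emptyset=0$, and \emph{submodular}, i.e.\ $\kap_S+\kap_T\ge\kap_{S\cup T}+\kap_{S\cap T}$ for all $S,T\subset V$. I would verify submodularity edge by edge: a fixed edge $e$ contributes to $\kap_S+\kap_T$ the number of the sets $S,T$ it meets and to $\kap_{S\cup T}+\kap_{S\cap T}$ the number of $S\cup T,S\cap T$ it meets, and a short case analysis according to where the endpoints of $e$ lie shows the first contribution always dominates the second. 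Thus $\Pol_G$ is an integral polymatroid, and the content of the theorem is the translation of the (standard) greedy description of its vertices into graph-theoretic language.

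The equivalence of (2) and (3) is pure bookkeeping: reversing a linear order $\Pi=(v_1,\dots,v_n)$ interchanges ``edges from $v_i$ to $\{v_1,\dots,v_i\}$'' with ``edges from $v_i$ to $V-\{v_1,\dots,v_{i-1}\}$'' and carries the prefix of $m$ forced zeros of $\ba^{\Pi,m}$ to the complement of a linearly ordered support, so I would simply spell out this dictionary, taking $J=(v_n,\dots,v_{m+1})$ and $r=n-m$. For (3)$\Rightarrow$(1) I observe that the entries of $\ba^J$ are precisely the greedy increments $a_{v_i}=\kap_{\{v_1,\dots,v_i\}}-\kap_{\{v_1,\dots,v_{i-1}\}}$, whose partial sums telescope to $\ba^J(\{v_1,\dots,v_i\})=\kap_{\{v_1,\dots,v_i\}}$. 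Feasibility $\ba^J\in\Pol_G$ then follows from submodularity by the standard greedy estimate, while the $r$ tight inequalities along the chain $\{v_1\}\subset\cdots\subset J$ together with the $n-r$ tight equalities $a_v=0$ for $v\notin J$ form a triangular, hence linearly independent, system of $n$ active constraints, exhibiting $\ba^J$ as a vertex. For the converse (1)$\Rightarrow$(3), every vertex is the unique maximizer of some linear functional, which I may take generic with distinct nonzero coefficients; running the greedy algorithm for polymatroids (Edmonds) with vertices ordered by decreasing coefficient produces exactly this maximizer, and its output is by construction of the form~\eqref{eq:vertex3}, with $J$ the ordered set of vertices of positive coefficient.

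The genuinely combinatorial part is (3)$\Leftrightarrow$(4). For (3)$\Rightarrow$(4) I would exhibit the realization of $\ba^J$ explicitly: orient every edge meeting $J$ out of its $\Pi$-earliest endpoint in $J$ and leave all other edges unoriented. Induction along the order of $J$ shows this is forced (every edge incident to $v_1$ must point out of $v_1$, and so on), so it is unique. For (4)$\Rightarrow$(3) I would first characterize uniqueness intrinsically. Given two distinct realizations $\Sigma,\Sigma'$ of the same $\ba$, with $O_v$ (resp. $O'_v$) the set of edges oriented out of $v$, I form the auxiliary digraph on $V\cup\{\ast\}$ with one arc $\mathrm{tail}_\Sigma(e)\to\mathrm{tail}_{\Sigma'}(e)$ for each edge $e$ whose tail (writing $\ast$ for ``unoriented'') changes. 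Since both realizations give every $v$ the same out-degree $a_v=|O_v|=|O'_v|$, this digraph is balanced at every vertex (including $\ast$), hence nonempty and containing a simple directed cycle. If the cycle avoids $\ast$ it is a directed cycle of $\Sigma$; if it meets $\ast$, the vertex following $\ast$ is a support vertex of $\ba$ that carries an unoriented incident edge. Conversely either phenomenon produces a second realization (reverse the cycle, or swap an out-edge for the unoriented edge). Hence $\ba$ has a unique realization iff its realization $\Sigma$ is acyclic and every vertex with $a_v>0$ has all its incident edges oriented; taking a topological order of such a $\Sigma$ as $J$ then reproduces formula~\eqref{eq:vertex3}.

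The main obstacle is exactly the uniqueness criterion in (4)$\Rightarrow$(3): pinning down the two local obstructions to a second realization --- a reversible directed cycle, and a support vertex incident to an unoriented edge --- and proving these are the only ones. The balanced auxiliary digraph and its cycle decomposition is the crucial device, and the bookkeeping of loops (fixed points of the orientation-reversing involution) and of the symbol $\ast$ must be handled with care. By contrast the polymatroid input behind (1)$\Leftrightarrow$(2)$\Leftrightarrow$(3) is standard, the only mild subtlety being to keep track of the zero coordinates when reading the greedy output off as a vector of the form~\eqref{eq:vertex3}.
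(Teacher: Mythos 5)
Your proposal is correct, and for the core equivalence $(1)\Leftrightarrow(2)\Leftrightarrow(3)$ it takes a genuinely different route from the paper. You identify $\Pol_G$ as the independence polytope of the integral polymatroid with rank function $\kap$ (which is indeed normalized, monotone, and submodular --- your edge-by-edge verification is exactly the paper's Lemma~\ref{lem:submodular}) and then invoke Edmonds' greedy theorem: feasibility of $\ba^J$ via the telescoping/submodular estimate, vertexhood via the triangular system of $n$ tight constraints along the chain $\{v_1\}\subset\cdots\subset J$ together with $a_v=0$ off $J$, and the converse by realizing each vertex as the unique maximizer of a generic functional and matching it with the greedy output. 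This is shorter and conceptually cleaner than the paper's argument, which instead proves $(1)\Rightarrow(2)$ by a self-contained induction on $|V|$ (locating a vertex $u$ with $a_u=\kap_u$ via a minimal tight set and submodularity, then passing to $G-u$) and $(2)\Rightarrow(1)$ by a lexicographic separation argument; the price of your route is that it outsources the greedy optimality statement to a standard but nontrivial external theorem, whereas the paper stays elementary. Your translation $(2)\Leftrightarrow(3)$ with $J=(v_n,\dots,v_{m+1})$, $r=n-m$ is the correct bookkeeping (the paper's printed $J=(v_n,\dots,v_{n-m})$ is off by one against its own $m=n$ example). For $(3)\Leftrightarrow(4)$ your argument is essentially the paper's --- forcing the orientation along the order of $J$ for uniqueness, and extracting $J$ from a topological order of an acyclic, support-saturated realization --- except that your balanced exchange digraph on $V\cup\{\ast\}$ proves the full \emph{iff} characterization of uniqueness (the only obstructions are a reversible directed cycle or a support vertex with an unoriented incident edge), of which the theorem only needs the easy direction (either obstruction yields a second realization); that extra device is correct and is a mild strengthening of what the paper records.
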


\ \\
In the proof of this theorem, we will use a special property of the degree function $\kap_S$.
Recall that a function $f:2^V\to \R$ is called \emph{submodular}, if $$f(A)+f(B)\ge f(A\cup B)+f(A\cap B)$$ for all $A,B\subset V$.

\begin{lemma}\label{lem:submodular}  The degree function $\kap$ of any graph $G=(V,E)$ is  submodular.
\end{lemma}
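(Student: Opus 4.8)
The plan is to verify the submodular inequality $\kap_A + \kap_B \ge \kap_{A \cup B} + \kap_{A \cap B}$ directly by classifying edges according to how their endpoints meet the sets $A$ and $B$, since $\kap_S$ simply counts edges incident to $S$. Recall from \eqref{eq:degs} that an edge $e$ contributes $1$ to $\kap_S$ precisely when at least one endpoint of $e$ lies in $S$ (loops included, counted once). So I would fix an arbitrary edge $e$ and compare its total contribution to the left-hand side, $[e \cap A \ne \emptyset] + [e \cap B \ne \emptyset]$, against its contribution to the right-hand side, $[e \cap (A \cup B) \ne \emptyset] + [e \cap (A \cap B) \ne \emptyset]$, where $[\cdot]$ is the indicator function. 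Summing over all $e \in E$ then yields the inequality.

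First I would set up the edge-by-edge accounting. For each edge $e$, write $P = \{u : u \text{ is an endpoint of } e\}$ (a one- or two-element subset of $V$, where loops give $|P|=1$). The key observation is that the contribution $[P \cap A \neq \emptyset]$ is exactly the value at $A$ of the \emph{covering} set function associated to the ``hyperedge'' $P$, and such covering functions are well known to be submodular; but rather than invoke this, I would just check the four or five cases. The cleanest formulation: for any fixed $P$, the function $S \mapsto [P \cap S \neq \emptyset]$ is submodular because $P \cap (A \cup B) \neq \emptyset$ forces $P \cap A \neq \emptyset$ or $P \cap B \neq \emptyset$ (so the right side's first term is bounded by the left), while $P \cap (A \cap B) \neq \emptyset$ forces \emph{both} $P\cap A \neq \emptyset$ and $P \cap B \neq \emptyset$ (so the right side's second term is dominated by the remaining left-side contribution). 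This gives the pointwise inequality
\[
[P \cap A \neq \emptyset] + [P \cap B \neq \emptyset] \ge [P \cap (A\cup B) \neq \emptyset] + [P \cap (A \cap B) \neq \emptyset].
\]

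Then I would finish by summing this pointwise inequality over all edges $e \in E$, noting that $\kap_S = \sum_{e \in E} [P_e \cap S \neq \emptyset]$ by definition \eqref{eq:degs}. Linearity of the sum immediately produces $\kap_A + \kap_B \ge \kap_{A \cup B} + \kap_{A \cap B}$, which is exactly submodularity. I do not anticipate a genuine obstacle here, since the statement reduces to an elementary indicator inequality; the only thing to be careful about is handling loops (where $P$ is a singleton) and edges with exactly one endpoint in $A \cup B$ correctly, but these fall under the same case analysis. The whole argument is short, and the main conceptual point is simply recognizing $\kap_S$ as a sum of covering indicators over edges.
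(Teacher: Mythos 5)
Your proof is correct and follows essentially the same route as the paper: both decompose $\kap_S$ as a sum over edges of the indicator $[P_e\cap S\ne\emptyset]$ and verify the submodular inequality edge by edge via a short case analysis. Your phrasing in terms of covering indicators is just a slightly more abstract packaging of the paper's three-case comparison of each edge's contribution to the two sides.
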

\begin{proof}
To show that
\begin{equation}\label{eq:submodular}
  \kap_{I}+\kap_J\ge \kap_{I\cap J}+\kap_{I\cup J}
\end{equation}
for $I,J\subset V$, we will compare the contributions of a given edge $e\in E$ to both sides of this inequality. There are three possibilities.

(i) \ If $e$ is incident to both $I$ and $J$, then it contributes $2$ to the left-hand side of~\eqref{eq:submodular} and $1$ or $2$ to the right-hand side;

(ii) \ If $e$ is incident to only one of the two subsets $I,J$,
then its contribution to each of the two parts of~\eqref{eq:submodular} is equal to $1$;

(iii) \ If $e$ is incident to neither $I$ nor $J$,
then it contributes $0$ to each part.

Since in each of the three cases the inequality~\eqref{eq:submodular} holds, we conclude that $\kap$ is a submodular function.
\end{proof}

\

\begin{proof}[Proof of Theorem~\ref{th:vertices}]
\ \\
\noindent
\textbf{(1) $\boldsymbol{\Rightarrow}$ (2).}
\
We use induction on the number $n=|V|$ of vertices $G$. If $n=1$, then $G$ has one vertex $v$ and $\ell=|E|$ is the number of edges which are all loops. In this case $\Pol_G$ is the segment $[0,\ell]\subset \Zp$. Its vertices, $0$ and $\ell$, have the required form $\ba^{\Pi,m}$, given by~\eqref{eq:vertex2}, corresponding to the trivial ordering $\Pi=(v)$ and $m=1$ and $m=0$, respectively.

To carry out the induction step, assume that $\ba=(a_v)\in \Zp^V$ is a vertex of $\Pol_G$.
There are two possibilities:
\\[2pt]

\noindent
 (i) \emph{for every nonempty subset $S\subset V$, we have $\ds \sum_{v\in S}a_v< \kap_S$;}
\\[4pt]
\noindent
(ii) \emph{there exists a nonempty subset $S\subset V$ such that
$\ds \sum_{v\in S}a_v=\kap_S$.}
\\[4pt]

In the first case, if $\ba$ has two nonzero components $a_u$ and $a_v$ for  $u\ne v$, then the vectors $\ba'=\ba+\be_v-\be_u$ and $\ba''=\ba-\be_v+\be_u$ are distinct and  belong to $\Pol_G$. (Here $\be_v$ denotes the $v$-th standard basis vector of the lattice $\Z^V$.)
Thus $\ds \ba=\frac{1}{2}(\ba'+\ba'')$ which contradicts the assumption that $\ba$ is a vertex of $\Pol_G$.  If $\ba$ has at most one nonzero component $a_v$, then $\ba=a_v\be_v$. Since both vectors $\ba$  and  $\kap_v\be_v$ belong to $\Pol_G$ and, by our assumption $a_v<\kap_v$, we see that $\ba$ can be a vertex of $\Pol_G$ only when $a_v=0$, i.e.\ $\ba=\mathbf{0}$. This vector has the desired form~\eqref{eq:vertex2}
for $m=n$ and arbitrary $\Pi$.
\\[2pt]

 In the second  case, we claim that there exists $u\in S$ with $a_u=\kap_u$. To prove this, let $S$ be a minimal (by inclusion) subset of $V$ with the above property. We want to show that $|S|=1$.

First notice that if $S'\subset V$ also satisfies $\ds \sum_{v\in S'}a_v=\kap_{S'}$ and $S\cap S' \ne \emptyset$, then $S\subset S'$.
Indeed, if $\emptyset\ne S\cap S'\ne S$, then
 minimality of $S$ implies
$\ds \kap_{S\cap S'}> \sum_{v\in S\cap S'}a_v$. Therefore, by  Lemma~\ref{lem:submodular} we get
$$
\kap_{S\cup S'}\le  \kap_S+\kap_{S'}-\kap_{S\cap S'}<
\sum_{v\in S}a_v+\sum_{v\in S'}a_v - \sum_{v\in S\cap S'}a_v=\sum_{v\in S\cup S'}a_v,
$$
which contradicts the assumption that $\ba\in \Pol_G$.

Now, if $|S|>1$, choose $p,q\in S, \ p\ne q$.
Let us show that the vectors $\ba'=\ba+\be_p-\be_q$ and
$\ba''=\ba-\be_p+\be_q$ belong to
$\Pol_G$. It is enough to do this for $\ba'$.
Since $S$ is minimal by inclusion, we know that  $a_p,\ a_q>0$, which implies that  $\ba', \ \ba''\in \Zp^V$.
For a subset $S'\subset V$, let $\delta_{S'}:V\to \{0,1\}$ be the indicator function of $S'$.
If $|S'\cap\{p,q\}|=0$ or $|S'\cap\{p,q\}|=2$, then $\delta_{S'}(p)=\delta_{S'}(q)$, and we have
$$
\sum_{v\in S'}a_v'=\sum_{v\in S'}a_v+\delta_{S'}(p)-\delta_{S'}(q) = \sum_{v\in S'}a_v \le \kap_{S'}.
$$
If $|S'\cap\{p,q\}|=1$, then $\emptyset\ne S\cap S'\ne S$ and, as we saw above, minimality of $S$ implies that $\ds \sum_{v\in S'}a_v < \kap_{S'}$.
Thus we have
$$
\sum_{v\in S'}a_v'=\sum_{v\in S'}a_v+\delta_{S'}(p)-\delta_{S'}(q) < \kap_{S'} + \delta_{S'}(p)-\delta_{S'}(q) \le \kap_{S'}.
$$
Therefore, $\ba'$ and $\ba''$ are distinct vectors in $\Pol_G$,
which shows that vector $\ds \ba=\frac{1}{2}(\ba'+\ba'')$ cannot be a vertex of $\Pol_G$.

This implies that $|S|=1$, i.e.\ there exists $u\in V$ such that $a_u=\kap_u$. Let $G'=(V',E')$ with $V'=V\setminus\{u\}$ be the graph obtained by removing  from $G$ the vertex $u$ and all edges incident to it.
If we identify $\R^{V'}$ with the affine hyperplane
$H_u:=\{(x_v)\in \R^V \col x_u=\kap_u\}$ in $\R^V$, then the score vector polytope $\Pol_{G'}$ of $G'$ will be identified with the face $\Pol_G\cap H_u$ of the polytope $\Pol_G$. Clearly, vertices of $\Pol_{G}$ lying in $H_u$ correspond to vertices of $\Pol_{G'}$. By the induction hypothesis, the vertex $\ba'\in \Pol_{G'}$ corresponding to  $\ba\in \Pol_G$ is of the form $\ba^{\Pi',m}$~\eqref{eq:vertex2} for some ordering $\Pi'=(v_1,\ldots,v_{n-1})$ of $V'$. Then, appending $u$ at the end of $\Pi'$, we see that the vector $\ba$ is also of the required form, $\ba=\ba^{\Pi,m}$, where $\Pi=(v_1,\ldots,v_{n-1},u)$, thus proving the induction step.

\

\noindent
\textbf{(2) $\boldsymbol{\Rightarrow}$ (1).}
We need to check that every vector $\ba^{\Pi,m}=(a_v)_{v\in V}$ of the form~\eqref{eq:vertex2} is a vertex of $\Pol_G$.
First, from~\eqref{eq:vertex2} we see that $\ds \sum_{v\in S}a_v\le \kap_S$,  for every $S\subset V$, i.e.\ $\ba^{\Pi,m}\in \Pol_G$.

Now assume that $\ba^{\Pi,m}$ is not a vertex of $\Pol_G$, i.e.\ it belongs to the convex hull of some vertices $\bb_1,\ldots,\bb_p\in \Pol_G$.

Let $\preceq$ be the lexicographic order on $\R^V_{\ge 0}$ induced by the reversed ordering $\overline{\Pi}=(v_n,\ldots,v_2,v_1)$ of $V$. That is $\mathbf{x}=(x_{v_1},\ldots,x_{v_n})\preceq  \mathbf{y}=(y_{v_1},\ldots,y_{v_n})$ if $\mathbf{x}-\mathbf{y}=(z_{v_1},\ldots,z_{v_\ell},0,\ldots,0)$, with $z_{v_\ell}<0$, for some $\ell\in [n]$.
Since $\ba^{\Pi,m}$ belongs to the convex hull of $\bb_1,\ldots,\bb_p$, at least one of these vectors must be strictly greater than $\ba^{\Pi,m}$ with respect to $\preceq$.
From~\eqref{eq:vertex2} we know that $a_{v_i}=0$ for $i\le m$, which implies that the first $m$ coordinates of each of $\bb_j\in \R^V_{\ge 0}$ are also $0$. This shows that there is a vector
\begin{equation}
  \label{eq:inpol}
\bb = (0, \ldots, 0, b_{v_{m+1}}, \ldots, b_{v_\ell}, a_{v_{\ell+1}}, \ldots, a_{v_n}) \in   \Pol_G,
\end{equation}
with $b_{v_\ell} > a_{v_\ell}$ for some $\ell \in \{m+1,\ldots,n\}$.
Then, for $S=\{v_\ell,\ldots,v_n\}$ we have
$$ \sum_{v\in S}b_v>\sum_{v\in S} a_v=\sum^n_{i=\ell} a_{v_i}=\kap_S.$$
Hence, $\bb$ does not satisfy~\eqref{eq:polytope} which gives a contradiction with~\eqref{eq:inpol}.

\

\noindent
\textbf{(2) $\boldsymbol{\Leftrightarrow}$ (3).}
\
If a vector $\ba=\ba^{\Pi,m}$  is of the form~\eqref{eq:vertex2}  with an ordering $\Pi=(v_1,\ldots,v_n)$ and $m\in \{0,1,\ldots,n\}$, then $\ba$ can be presented in the form~\eqref{eq:vertex3},  $\ba=\ba^{J}$ by taking $J=(v_n,v_{n-1},\ldots,v_{n-m})$. (In particular, $J=\emptyset$, if $m=n$).
Conversely, if  $\ba=\ba^{J}$ for an ordered subset $J=(v_1,\ldots,v_r)$, then $\ba=\ba^{\Pi,m}$, with $m=n-|J|$ and the ordering $\Pi=(v_r,v_{r-1},\ldots)$ obtained by reversing $J$ and appending to it the complement $V\setminus J$ in any order.

\

\noindent
\textbf{(2) $\boldsymbol{\Rightarrow}$ (4).}
A vector  $\ba^{\Pi,m}$ of the form~\eqref{eq:vertex2}  is
the score vector of the partial orientation in which an edge $e\in E$ between vertices $v_i$ and $v_j$ with $i\ge j$ is oriented from $v_i$ to $v_j$, if $i> m$, and is unoriented otherwise. The uniqueness of such partial orientation follows from the observation that the component $a_{v_i}$ of  $\ba^{\Pi,m}$, for $i=m+1,\ldots,n$, is equal to
$\kap_{\{v_i, v_{i+1},\ldots,v_n\}}-\kap_{\{v_{i+1},\ldots,v_n\}}$.

\

\noindent
\textbf{(4) $\boldsymbol{\Rightarrow}$ (3).}
\
Let $\ba=(a_v)_{v\in V}\in \Pol_G$ be a partial score vector that corresponds to a unique partial orientation $\Sigma\subset \Eh$.
To construct an ordered subset $J\subset V$ such that $\ba=\ba^J$, we will use two special properties of  $\Sigma$.

First, we claim that if an arrow $e \in \Eh$  belongs to $\Sigma$, then every edge incident to the source $v=s(e)\in V$ of $e$ must also be oriented in $\Sigma$ (i.e.\ for every $f\in s^{-1}(v)\subset \Eh$ either $f$ or $f'$ is in $\Sigma$).
Indeed, if neither $f$ nor $f'$ are in $\Sigma$, then after replacing $e$ by $f$ we will obtain a new partial orientation $\Sigma'=(\Sigma\cup \{f\})-\{e\}$ which gives the same partial score vector $\ba$.

Second, we claim that $\Sigma$ contains no oriented cycles other than loops. Indeed, if non-loop arrows $e_1,e_2,\ldots,e_p\in \Sigma$ form an oriented cycle, then replacing them in $\Sigma$ with oppositely oriented arrows $e'_1,\ldots,e'_p$, will not change the partial score vector.

Now, let $J\subset V$ be the set of all vertices $v\in V$ with $a_v>0$. If $J=\emptyset$, then $\ba=\mathbf{0}=\ba^J$. Thus we can assume that $J$ is nonempty.
From the first property it follows that every edge in $G$ incident to some vertex from $J$ is oriented in $\Sigma$. Since $\Sigma$ has no oriented cycles and $J\ne\emptyset$, there exists a source vertex $v_1\in J$ (i.e.\  all edges adjacent to  $v_1$ are oriented away from $v_1$). The induced orientation on  $J-\{v_1\}$ is still acyclic, so we can choose $v_2\in J-\{v_1\}$ so that all edges incident to $v_2$ are oriented away from $v_2$ except possibly those incident to $v_1$.
Continuing in this way, we will obtain an ordering of $J=(v_1,\ldots,v_{|J|})$  such that  for each $i$ all edges incident to $v_i$ are oriented away from $v_i$ except those incident to $\{v_1,v_2,\ldots,v_{i-1}\}$. By construction,  we have $\ba=\ba^J$.
\end{proof}

In general it is difficult to find the exact number of vertices of the score vector polytope $\Pol_G$.  However, we have the following  upper bound.

\begin{corollary}
  For a graph $G$ with $n$ vertices, the polytope $\Pol_G$ has at most $\lfloor e\cdot n! \rfloor$ vertices.

  If $G$ is a simple graph, then the number of vertices of $\Pol_G$ does not exceed $\lfloor (e-1)\cdot n! \rfloor$. This bound is exact  only if $G$ is the complete graph $K_n$.
\end{corollary}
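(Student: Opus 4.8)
The plan is to read off the vertices of $\Pol_G$ from Theorem~\ref{th:vertices} and reduce everything to counting linearly ordered subsets of $V$, matching these counts against $\lfloor e\cdot n!\rfloor$ and $\lfloor (e-1)\cdot n!\rfloor$. First I would record the two elementary floor identities that give these numbers a combinatorial meaning. Writing $e\cdot n!=\sum_{k\ge 0} n!/k!$, the terms with $k\le n$ are positive integers and the tail satisfies $0<\sum_{k>n} n!/k!<1/n\le 1$, so that
$$\lfloor e\cdot n!\rfloor=\sum_{k=0}^n \frac{n!}{k!}=\sum_{r=0}^n \frac{n!}{(n-r)!},\qquad \lfloor (e-1)\cdot n!\rfloor=\sum_{k=1}^n \frac{n!}{k!}=\lfloor e\cdot n!\rfloor-n!.$$
Here $\sum_{r=0}^n n!/(n-r)!$ is exactly the number of linearly ordered subsets (partial permutations) of $V$, and subtracting the $n!$ full orderings leaves the number of ordered subsets of size at most $n-1$.

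For the general bound, the equivalence (1)$\Leftrightarrow$(3) of Theorem~\ref{th:vertices} says every vertex of $\Pol_G$ has the form $\ba^J$ for a linearly ordered subset $J\subset V$, so the number of vertices is at most the number of such $J$, namely $\lfloor e\cdot n!\rfloor$. For the simple case I would show that full orderings are redundant: if $G$ is simple (hence loopless) and $J=(v_1,\dots,v_n)$ is a full ordering, then by~\eqref{eq:vertex3} the last coordinate $a_{v_n}$ counts edges from $v_n$ to $V-\{v_1,\dots,v_{n-1}\}=\{v_n\}$, i.e.\ loops at $v_n$, so $a_{v_n}=0$; comparing~\eqref{eq:vertex3} for $J$ and for $J'=(v_1,\dots,v_{n-1})$ gives $\ba^J=\ba^{J'}$. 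Thus every vertex already occurs as $\ba^J$ with $|J|\le n-1$, and the count drops to $\lfloor (e-1)\cdot n!\rfloor$.

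For exactness I would analyze when $J\mapsto \ba^J$ fails to be injective on ordered subsets of size $\le n-1$. The key move is an adjacent transposition: if $v_{i-1}$ and $v_i$ are non-adjacent, swapping them in $J$ leaves $\ba^J$ unchanged, since the prefixes relevant to all other positions are unaltered and the only affected coordinates $a_{v_{i-1}},a_{v_i}$ each change by the number of edges between $v_{i-1}$ and $v_i$, which is zero. Hence if $G\ne K_n$ and $n\ge 3$, a non-adjacent pair $u,w$ yields the collision $\ba^{(u,w)}=\ba^{(w,u)}$ between two distinct size-two subsets, forcing the number of vertices to be strictly below $\lfloor (e-1)\cdot n!\rfloor$ (the cases $n\le 2$ being checked by hand); this is the "only if" direction. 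To see the bound is genuinely attained for $K_n$, I would note that in $K_n$ formula~\eqref{eq:vertex3} gives $a_{v_i}=n-i$ for $v_i\in J$ and $0$ otherwise, so when $|J|\le n-1$ the nonzero values $n-1,\dots,n-|J|$ are distinct and positive and $J$ is reconstructed from $\ba^J$ by reading off the position of each value; thus the map is injective and the bound is exact, making the statement an honest "if and only if."

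The routine parts are the two floor identities and the truncation step. The main obstacle is the exactness claim: one must correctly isolate the obstruction to injectivity — the adjacent transposition of a non-adjacent pair — and, for the converse, carry out the short computation showing that the coordinates of $\ba^J$ strictly decrease along $J$ in $K_n$, which is precisely what makes $J$ recoverable and the bound tight.
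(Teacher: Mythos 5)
Your proposal is correct and follows essentially the same route as the paper: bound the vertex count by the number of linearly ordered subsets via part (3) of Theorem~\ref{th:vertices}, discard full orderings for simple graphs by the truncation $\ba^{(v_1,\dots,v_n)}=\ba^{(v_1,\dots,v_{n-1})}$, exhibit the collision $\ba^{(u,w)}=\ba^{(w,u)}$ for a non-adjacent pair to get strictness when $G\ne K_n$, and verify injectivity of $J\mapsto\ba^J$ for $K_n$ from the distinct nonzero coordinates $n-1,\dots,n-|J|$. The only additions are the explicit floor-identity estimates and the general adjacent-transposition observation, both of which are consistent with (and slightly more detailed than) the paper's argument.
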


\begin{proof}
  From part (3) of Theorem~\ref{th:vertices} it follows that the number $N_G$ of vertices in $\Pol_G$ is  less than or equal to the number of ordered subsets of $V$, i.e.\
  $$ N_G\le \sum_{S\subset V}(|S|)!=\sum_{m=0}^n \binom{n}{m}m!=
\sum_{m=0}^n \frac{n!}{(n-m)!}=n!\sum_{i=0}^n\frac{1}{i!}=\lfloor e\cdot n!\rfloor.
$$

If $G$ is a simple graph, then any ordering  $J=(v_1,\ldots,v_{n-1},v_n)$  of the full set $V$ and its truncation $J'=(v_1,\ldots,v_{n-1})$ give the same vertex $\ba^J=\ba^{J'}$ of $\Pol_G$. Therefore, in this case, we can drop the last term in the above sum and obtain a better estimate
\begin{equation}
  \label{eq:simple}
  N_G\le n!\sum_{m=0}^{n-1}\frac{1}{(n-m)!}=n!\sum_{i=1}^n\frac{1}{i!}=\lfloor (e-1)\cdot n!\rfloor.
\end{equation}
\\[4pt]

If $G=K_n$ is a complete graph with $V=[n]=\{1,\ldots,n\}$, then
the vertex of $\Pol_G$ corresponding by~\eqref{eq:vertex3} to the ordered subset $J=(1,\ldots,m)\subset [n]$ is given by
$$\ba^J=(n-1,n-2,\ldots,n-m,0,\ldots,0)$$ and all other vertices are obtained from it by permutations of $[n]$.
This vertex has $m$ distinct nonzero entries and, therefore, each of $n!/(n-m)!$ choices of nonempty ordered $m$-element subsets of $V$ gives a distinct vertex of $\Pol_G$. Thus for the complete graph, ~\eqref{eq:simple} becomes an equality:
$$
N_{K_n}=\sum_{i=1}^n \frac{n!}{i!}=\lfloor (e-1)\cdot n!\rfloor.
$$

Finally, for a non-complete simple graph $G$ with $n$ vertices, choose
two vertices $v_1, v_2\in V$ not connected by an edge. Then the ordered subsets $J_1=(v_1,v_2)$ and $J_2=(v_2,v_1)$ give  the same vertex $\ba^{J_1}=\ba^{J_2}$ of $\Pol_G$. Therefore, in this case, the inequality~\eqref{eq:simple} is strict.
\end{proof}

\subsection{Weak  parking functions of a graph}
\label{sec:park}

Here we will give yet another characterization of the score vector polytope $\Pol_G$ of a graph $G=(V,E)$ and, thus, another description of the monomial basis of the external bizonotopal algebra $\BZ_G^e$.

Recall the notion of a   parking function of a graph introduced by Postnikov and Shapiro in~\cite{PS}.
\begin{definition}
Let $G=(V,E)$ be a graph. For a subset of vertices $S\subset V$ and $v\in S$, denote by $d_S(v)$ the number of edges in $E$ connecting $v$ with vertices in $V\setminus S$.

A \emph{$G$-parking function}, relative to a distinguished vertex $q\in V$, is a function $f:V\setminus \{q\}\to \Zp$ such that for each nonempty subset $S\subset V\setminus \{q\}$, there exists a vertex $v\in S$ with
$f(v)< d_S(v)$.
\end{definition}

The above definition does not take into account loops of $G$. To change this,
we introduce a modification of the concept of a $G$-parking function which does not require a choice of a distinguished vertex and, as we will see below, is related to the score vector polytope $\Pol_G$.

\begin{definition}
\label{def:park}
  For a subset of vertices $S\subset V$ of a graph $G=(V,E)$ and a vertex $v\in S$, we denote by $\dh_S(v)$ the number of edges in $E$ with one end at $v$ and the other in $(V\setminus S)\cup \{v\}$.
  In other words,
  $$\dh_S(v)=d_S(v)+\ell(v),$$
  where $\ell(v)$ is the number of loops at $v$. In particular,
  \[ \dh_V(v)=\ell(v) \text{ and } \dh_{v}(v):=\dh_{\{v\}}(v)=\kap_v.
  \]

  A \emph{weak $G$-parking function}  is a function
  $f:V\to \Zp$ such that for each nonempty subset $S\subset V$, there exists a vertex $v\in S$ with $f(v)\le \dh_S(v)$.
\end{definition}

In the definition of a $G$-parking function we had to exclude the distinguished vertex because otherwise the inequality $f(v)<d_S(v)$ would be impossible to satisfy for $S=V$. For weak parking functions this problem does not arise.
However, we can view weak parking functions as parking functions for a special graph.

\begin{definition}
The \emph{delooped cone} of a graph $G=(V,E)$ is
the graph  $C_G$  obtained by adding to $G$ a new vertex, called the \emph{apex}, connected by edges to every vertex of $G$, and replacing each loop by an edge connected to the apex.
More precisely, if $L$ is the set of loops in $G$, then
$$C_G=(V_C,E_C),$$
where
$$
V_C=V\sqcup \{v_0\}
$$
and
$$
 E_C=(E-L)\sqcup \{(v,v_0)\col v\in V\} \sqcup \{(s(\ell),v_0) \col  \ell\in L\}.
 $$

If $G$ is a loopless graph then $C_G$ is just the usual cone graph of $G$.
\end{definition}

\

\begin{example}
  We will illustrate these notions for the following graphs
$G_1=
\parbox[c]{0.6in}{\ \\[10pt]  \begin{tikzpicture}
      [scale=.5]
      \coordinate (A) at (0, 0);
      \coordinate (B) at (2, 0);
      \draw (A) -- (B);
      \filldraw[black] (A) circle (2pt) node[below] {\small 1};
      \filldraw[black] (B) circle (2pt) node[below] {\small 2};
    \end{tikzpicture}}
  $,
  \quad
    $ G_2=
    \parbox{0.6in}{   \begin{tikzpicture}
      [scale=.5]
      \coordinate (A) at (0, 0);
      \coordinate (B) at (2, 0);
      \draw (B) to[out=45,in=135,loop, min distance=1.5cm] (B);
      \filldraw[black] (A) circle (2pt) node[below] {\small 1};
      \filldraw[black] (B) circle (2pt) node[below] {\small 2};
    \end{tikzpicture}
}
$, \quad
  $ G_3=
  \parbox{0.6in}{   \begin{tikzpicture}
      [scale=.5]
      \coordinate (A) at (0, 0);
      \coordinate (B) at (2, 0);
      \draw (A) -- (B);
      \draw (B) to[out=45,in=135,loop, min distance=1.5cm] (B);
      \filldraw[black] (A) circle (2pt) node[below] {\small 1};
      \filldraw[black] (B) circle (2pt) node[below] {\small 2};
    \end{tikzpicture}
  }
  $, \ and \quad
  $ G_4=
  \parbox{0.6in}{   \begin{tikzpicture}
      [scale=.5]
      \coordinate (A) at (0, 0);
      \coordinate (B) at (2, 0);
      \draw (A) to[out=30,in=150] (B);
      \draw (A) to[out=-30,in=-150] (B);
      \filldraw[black] (A) circle (2pt) node[below] {\small 1};
      \filldraw[black] (B) circle (2pt) node[below] {\small 2};
    \end{tikzpicture}
  }.
  $
\\[10pt]
  Their delooped cones are
\\
$C_{G_1}=
\parbox[c]{0.6in}{\ \\[10pt]  \begin{tikzpicture}
      [scale=.5]
      \coordinate (A) at (0, 0);
      \coordinate (B) at (2, 0);
      \coordinate (C) at (1, 1.73);
      \draw (A) -- (B) -- (C) -- (A);
      \filldraw[black] (C) circle (2pt) node[above] {\small A};
      \filldraw[black] (A) circle (2pt) node[below] {\small 1};
      \filldraw[black] (B) circle (2pt) node[below] {\small 2};
    \end{tikzpicture}}
  $,
  \
  $ C_{G_2}=
    \parbox{0.6in}{   \begin{tikzpicture}
      [scale=.5]
      \coordinate (A) at (0, 0);
      \coordinate (B) at (2, 0);
      \coordinate (C) at (1, 1.73);
       \draw (A) -- (C) -- (A);
       \draw (B) to[out=95,in=-35,] (C);
       \draw (B) to[out=145,in=-85,] (C);
      \filldraw[black] (C) circle (2pt) node[above] {\small A};
      \filldraw[black] (A) circle (2pt) node[below] {\small 1};
      \filldraw[black] (B) circle (2pt) node[below] {\small 2};
    \end{tikzpicture}
}
$, \ \ $ C_{G_3}=
  \parbox{0.6in}{   \begin{tikzpicture}
      [scale=.5]
      \coordinate (A) at (0, 0);
      \coordinate (B) at (2, 0);
      \coordinate (C) at (1, 1.73);
      \draw (A) -- (C) -- (A);
      \draw (A) -- (B);
      \draw (B) to[out=95,in=-35,] (C);
      \draw (B) to[out=145,in=-85,] (C);
      \filldraw[black] (C) circle (2pt) node[above] {\small A};
      \filldraw[black] (A) circle (2pt) node[below] {\small 1};
      \filldraw[black] (B) circle (2pt) node[below] {\small 2};
    \end{tikzpicture}
  }
  $, \ and \
  $ C_{G_4}=
  \parbox{0.6in}{   \begin{tikzpicture}
      [scale=.5]
      \coordinate (A) at (0, 0);
      \coordinate (B) at (2, 0);
      \draw (A) -- (C) -- (B);
      \draw (A) to[out=30,in=150] (B);
      \draw (A) to[out=-30,in=-150] (B);
      \filldraw[black] (C) circle (2pt) node[above] {\small A};
      \filldraw[black] (A) circle (2pt) node[below] {\small 1};
      \filldraw[black] (B) circle (2pt) node[below] {\small 2};
    \end{tikzpicture}
  }
  $
  and the lists $(f(1),f(2))$  of values of all their weak parking functions  are given in the following table.
  \medskip
  \begin{center}
  \begin{tabular}{|c|c|c|c|}
    \hline
    $G_1$ & $G_2$  & $G_3$ & $G_4$ \\ \hline
    (0,0)  &     (0,0)  &     (0,0)  &     (0,0)  \\ \hline
    (0,1)  &     (0,1)  &     (0,1)  &     (0,1)  \\ \hline
    (1,0)  &            &     (0,2)  &     (0,2)  \\ \hline
          &            &     (1,0)  &     (1,0)  \\ \hline
          &            &     (1,1)  &     (2,0)  \\ \hline
  \end{tabular}
\end{center}
\end{example}

\

We present below some basic properties of weak parking functions.
\begin{theorem}
\label{thm:wpf}
  Let $G=(V,E)$ be a graph.
  \begin{enumerate}[(i)]
  \item Weak parking functions of $G$ are precisely usual parking functions of the delooped cone $C_G$, relative to the apex $v_0$.
  \item If $f$ is a weak parking function of $G$, then
 $$f(v)\le \kap_v=\dh_{v}(v)$$ for all $v\in V$.
\item  If $f$ is a weak parking function of $G$ and $g:V\to \Zp$ is any function such that $g(v)\le f(v)$ for all $v\in V$, then $g$ is also a weak parking function.
\item Let $\Pi=(v_1,\ldots,v_n)$ be a linear ordering of the set of vertices  $V$. The function
  \begin{equation}
    \label{eq:fpi}
  f^\Pi:V\to \Zp, \ f^\Pi(v_i):=\dh_{\{v_i,\ldots,v_n\}}(v_i),
\end{equation}
  assigning to a vertex $v=v_i\in V$ the number of edges from $v$ to vertices $u=v_j, \text{ with } j\le i$, is a weak parking function.
  \item  For every weak parking function $f$, there exists a linear ordering $\Pi$ of $V$ such that $f(v)\le f^\Pi(v)$ for all $v\in V$.
  \item  For every weak parking function $f$, we have  $\ds \sum_{v\in V}f(v)\le |E|$.
  \item A weak parking function $f$ is maximal with respect to the point-wise order if and only if $$ \sum_{v\in V}f(v) =|E|$$
or, equivalently, when $f=f^\Pi$ for some linear ordering $\Pi$ of $V$.
  \end{enumerate}
\end{theorem}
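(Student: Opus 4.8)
The plan is to prove the seven statements in an order that lets the later ones build on the earlier, treating (i)--(iv) as short direct verifications and reserving the real work for (v). For (i), I would simply compute the degree function of the delooped cone: each vertex $v\in V$ is joined to the apex $v_0$ by one cone edge together with $\ell(v)$ loop-replacement edges, so for any $S\subset V$ the number of $C_G$-edges from $v$ to $(V-S)\cup\{v_0\}$ equals $d_S(v)+\ell(v)+1=\dh_S(v)+1$. Hence the $C_G$-parking condition $f(v)<\dh_S(v)+1$ is literally the weak parking condition $f(v)\le\dh_S(v)$. Part (ii) is the case $S=\{v\}$ of the definition, using $\dh_{\{v\}}(v)=d(v)+\ell(v)=\kap_v$. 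Part (iii) is immediate monotonicity: if $v\in S$ witnesses $f(v)\le\dh_S(v)$ and $g(v)\le f(v)$, the same $v$ witnesses the inequality for $g$. For (iv), given a nonempty $S$, I would take the element $v_j\in S$ of smallest index in $\Pi$; then $\{v_1,\dots,v_{j-1}\}\subset V-S$, so the edges counted by $f^\Pi(v_j)=\dh_{\{v_j,\dots,v_n\}}(v_j)$, namely those from $v_j$ into $\{v_1,\dots,v_j\}$, form a subset of those counted by $\dh_S(v_j)$, giving $f^\Pi(v_j)\le\dh_S(v_j)$.

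The core of the theorem is part (v), which I would prove by a greedy construction of the ordering. Set $S_1:=V$ and use the weak parking property to pick a vertex $v_1\in S_1$ with $f(v_1)\le\dh_{S_1}(v_1)$. Having chosen $v_1,\dots,v_{i-1}$, let $S_i:=V-\{v_1,\dots,v_{i-1}\}$; if $S_i\ne\emptyset$, the weak parking property applied to the nonempty set $S_i$ supplies a vertex $v_i\in S_i$ with $f(v_i)\le\dh_{S_i}(v_i)$, which I take as the $i$-th element of $\Pi$. In the resulting ordering $\Pi=(v_1,\dots,v_n)$ one has $S_i=\{v_i,\dots,v_n\}$, so $\dh_{S_i}(v_i)=f^\Pi(v_i)$ by~\eqref{eq:fpi}, and therefore $f(v_i)\le f^\Pi(v_i)$ for every $i$. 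This is the step I expect to be the main obstacle, since it is where the existential ``for each $S$ some $v$'' form of the definition must be converted into a single global ordering; the key observation making it work is that $\dh_{S_i}(v_i)$ depends only on the set $S_i$ and not on how the remaining vertices are ordered afterward.

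Finally I would deduce (vi) and (vii) from (v) together with one counting identity. Summing~\eqref{eq:fpi} over $i$, each non-loop edge $\{v_a,v_b\}$ with $a<b$ is counted exactly once (at $i=b$) and each loop at $v_c$ exactly once (at $i=c$), so $\sum_{v\in V}f^\Pi(v)=|E|$ for every ordering $\Pi$. Combined with $f\le f^\Pi$ from (v), this yields $\sum_{v\in V}f(v)\le|E|$, proving (vi); equivalently, since $\Pol_G$ is down-closed and each $f^\Pi$ is the vertex $\ba^{\Pi,0}$ of Theorem~\ref{th:vertices}, every weak parking function lies in $\Pol_G$ and the bound is the inequality~\eqref{eq:score-deg} for $S=V$. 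For (vii) I would close a cycle of implications: if $f=f^\Pi$ then $\sum_v f(v)=|E|$ by the identity; if $\sum_v f(v)=|E|$ then by (vi) no weak parking function can strictly dominate $f$, so $f$ is maximal; and if $f$ is maximal then the function $f^\Pi\ge f$ furnished by (v), being a weak parking function by (iv), must equal $f$. This establishes all three equivalent characterizations of maximality.
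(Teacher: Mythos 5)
Your proposal is correct and follows essentially the same route as the paper: direct verification of (i)--(iii), the minimal-index argument for (iv), the greedy ordering construction for (v), and the identity $\sum_v f^\Pi(v)=|E|$ to deduce (vi) and (vii). The only difference is that you spell out details the paper leaves as ``immediate,'' which is fine.
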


\begin{proof}
Statements (i)-(iii) follow immediately from the above definitions.

To prove (iv), take a subset $\emptyset\ne S\subset V$ and consider $v=v_m\in S$, where $m:=\min \{i\col v_i\in S\}$.
Then $S\subset S_m:=\{v_m,\ldots,v_n\}$ and therefore $f^\Pi(v)=\dh_{S_m}(v)\le \dh_S(v)$,
which shows that $f^\Pi$ is a weak parking function.

To show (v), let $f$ be a weak parking function and construct a linear
ordering $\Pi=(v_1,\ldots,v_n)$ of $V$ as follows. Start with a vertex $v_1\in V$ such that $f(v_1)\le \dh_V(v_1)=\ell(v_1)$. Proceeding
inductively, if we already have an ordered collection $(v_1,\ldots,v_m)$
with $m<n$, we take $v_{m+1}$ to be a vertex
$v\in S=V\setminus \{v_1,\ldots,v_m\}\ne \emptyset$ such that $f(v)\le \dh_S(v)$. From this construction it is clear that the resulting ordering $\Pi$ satisfies $f(v)\le f^\Pi(v)$ for all $v\in V$.

Statement (vi) now follows from (v), since, by definition of the function $f^\Pi$, it satisfies $\ds \sum_{v\in V}f^\Pi(v)=|E|$.

Finally, (vii) follows from (v) and (vi).
\end{proof}

\

Similar to the case of usual $G$-parking functions, the number of weak parking functions of a loopless graph has a combinatorial interpretation.

\begin{corollary}
The number of weak parking functions of a graph $G$ without loops is equal to the number of rooted spanning forests in $G$ (i.e.\ spanning forests with a distinguished vertex in every component).
\end{corollary}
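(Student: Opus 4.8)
The plan is to reduce the statement to the classical enumeration of $G$-parking functions by passing to the delooped cone, and then to identify spanning trees of that cone with rooted spanning forests of $G$. First I would note that since $G$ is loopless, the set $L$ of loops is empty, so the delooped cone $C_G$ is the ordinary cone: its vertex set is $V\sqcup\{v_0\}$ and it is obtained from $G$ by adjoining a single edge from the apex $v_0$ to each vertex of $G$. By Theorem~\ref{thm:wpf}(i), the weak parking functions of $G$ are exactly the $G$-parking functions of $C_G$ relative to the apex $v_0$. I would then invoke the theorem of Postnikov and Shapiro~\cite{PS} that, for a connected graph, the number of $G$-parking functions relative to any fixed vertex equals the number of spanning trees. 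Applying this to $C_G$, which is connected because $v_0$ is adjacent to every vertex, the number of weak parking functions of $G$ equals the number of spanning trees of $C_G$.

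The main content is then a bijection between spanning trees of $C_G$ and rooted spanning forests of $G$. Given a spanning tree $T$ of $C_G$, deleting the apex $v_0$ together with all edges incident to it leaves a spanning subgraph $F=T-v_0$ of $G$, and since $T$ is acyclic $F$ is a forest on $V$. Because $T$ is connected and contains no cycles, each connected component of $F$ is joined to $v_0$ by exactly one of the cone edges $(v,v_0)$, and I would declare the endpoint $v$ of that edge to be the root of its component. This produces a spanning forest of $G$ with a distinguished vertex in every component.

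Conversely, given a rooted spanning forest of $G$, adjoining the edges $(r,v_0)$ for each root $r$ yields a connected acyclic spanning subgraph of $C_G$, i.e.\ a spanning tree. These two operations are mutually inverse precisely because in $C_G$ there is a unique edge between $v_0$ and each vertex of $G$ — this is the point at which looplessness of $G$ enters, since otherwise delooping could create parallel cone edges and spoil the correspondence. Combining the bijection with the equality established in the first paragraph gives the desired count.

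The only subtle point I expect is verifying that each component of $F$ attaches to $v_0$ through exactly one cone edge: two such edges would create a cycle in $T$, while none would disconnect that component from the rest of $T$. Once this is observed, the bijection, and hence the equality of the two enumerations, follows at once.
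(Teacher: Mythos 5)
Your proposal is correct and follows essentially the same route as the paper: identify weak parking functions with $G$-parking functions of the cone $C_G$ via Theorem~\ref{thm:wpf}(i), count them by spanning trees of $C_G$ using Postnikov--Shapiro, and then biject spanning trees of $C_G$ with rooted spanning forests of $G$ by deleting the apex. Your write-up merely makes explicit the verification (unique cone edge per component) that the paper leaves as ``clearly.''
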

\begin{proof}
  For a loopless graph $G$ its delooped cone $C_G$ coincides with the usual cone of $G$. Therefore, by part (i) of the above theorem, the number of weak parking functions of $G$ is equal to the number of graph parking functions of $C_G$ relative to its apex.
  By~\cite[Theorem 2.1]{PS}, the latter number is equal to the number of spanning trees in $C_G$. Removing the apex $v_0$  from a spanning tree of $C_G$ turns it into a spanning forest of $G$ with one marked vertex in every component, which clearly gives a bijection between spanning trees of $C_G$ and rooted spanning forests of $G$.
\end{proof}

\

Now we will discuss the connection between weak parking functions and partial score vectors.

\begin{definition}
  Given a weak parking function $f:V\to \Zp$ of a graph $G$,
 the vector
  $$\bff:=(f(v))_{v\in V} \in \Zp^V$$
is called the \emph{parking vector} of $G$ corresponding to $f$.
\end{definition}

\begin{theorem}
  \label{thm:parking} Let $G=(V,E)$ be a graph.
\begin{enumerate}[(i)]
\item Every parking vector $\bff$ of $G$ is a partial score vector.

\item Parking vectors of $G$ are precisely the partial score vectors of $G$ coming from \textbf{acyclic} partial orientations.

  \item Parking vectors corresponding to maximal (with respect to the component-wise order) weak parking functions of $G$ are the partial score vectors coming from acyclic \textbf{total} orientations.

  \item  Every vertex  $\ba = (a_v)_{v\in V}$ of the score vector polytope
    $\Pol_G$ of a graph $G$ is a parking vector, i.e.\
    $f: V\to \Zp \col v\mapsto a_v$ is a weak parking function.
  \item The score vector polytope $\Pol_G$ of $G$ is the convex hull of the set of all parking vectors.
   \end{enumerate}
\end{theorem}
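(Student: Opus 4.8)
The plan is to organize everything around a single identification: for a linear ordering $\Pi=(v_1,\dots,v_n)$ of $V$, the maximal weak parking function $f^\Pi$ of~\eqref{eq:fpi} is precisely the score vector of the acyclic total orientation $\omega_\Pi$ that orients every non-loop edge from its later to its earlier endpoint in $\Pi$ and includes every loop. Indeed, the arrows of $\omega_\Pi$ leaving $v_i$ are exactly the edges joining $v_i$ to $\{v_1,\dots,v_{i-1}\}$ together with the loops at $v_i$, so the $v_i$-th score equals $\dh_{\{v_i,\dots,v_n\}}(v_i)=f^\Pi(v_i)$; in the notation of Theorem~\ref{th:vertices} this reads $f^\Pi=\ba^{\Pi,0}$. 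I would also record at the outset that $\Pol_G$ is downward closed: if $\mathbf 0\le\bb\le\ba$ and $\ba\in\Pol_G$, then $\sum_{v\in S}b_v\le\sum_{v\in S}a_v\le\kap_S$, so $\bb\in\Pol_G$. Throughout, \emph{acyclic} means containing no directed cycle of length $\ge 2$, so loops are always permitted, consistent with the usage in the proof of Theorem~\ref{th:vertices}.

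With this in hand, (i), (iv), and (v) fall out quickly. For (i): given a weak parking function $f$, Theorem~\ref{thm:wpf}(v) supplies an ordering $\Pi$ with $\bff\le f^\Pi=\ba^{\Pi,0}\in\Pol_G$, and downward closedness gives $\bff\in\Pol_G\cap\Zp^V$, which by Proposition~\ref{prop:scores} is exactly the assertion that $\bff$ is a partial score vector. For (iv): by Theorem~\ref{th:vertices} every vertex of $\Pol_G$ has the form $\ba^{\Pi,m}$, and comparing~\eqref{eq:vertex2} with $f^\Pi=\ba^{\Pi,0}$ shows $\ba^{\Pi,m}\le f^\Pi$ coordinatewise; since $f^\Pi$ is a weak parking function (Theorem~\ref{thm:wpf}(iv)) and weak parking functions are closed under passing to pointwise-smaller functions (Theorem~\ref{thm:wpf}(iii)), each vertex is a parking vector. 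For (v): a polytope equals the convex hull of its vertices, so $\Pol_G=\mathrm{conv}(\text{vertices})\subseteq\mathrm{conv}(\text{parking vectors})\subseteq\Pol_G$ by (iv) and (i), forcing equality.

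The substantive part is (ii), where I expect the real work. For the easy inclusion, an acyclic partial orientation $\Sigma$ admits a topological ordering $\Pi$ in which every non-loop arrow decreases the index; then each arrow of $\Sigma$ leaving $v_i$ either lands in $\{v_1,\dots,v_{i-1}\}$ or is a loop, so the score of $\Sigma$ is $\le f^\Pi$ coordinatewise and hence a parking vector by Theorem~\ref{thm:wpf}(iii)--(iv). For the reverse inclusion, I would start from a parking vector $\bff$, use Theorem~\ref{thm:wpf}(v) to obtain $\Pi$ with $f\le f^\Pi$, realize $f^\Pi$ as the score of $\omega_\Pi$, and then delete, for each $i$, exactly $f^\Pi(v_i)-f(v_i)\ge 0$ of the arrows of $\omega_\Pi$ leaving $v_i$. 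Deleting arrows cannot create a directed cycle and changes only the score at the chosen tail, so the result is an acyclic partial orientation with score vector $\bff$. Part (iii) is then immediate: by Theorem~\ref{thm:wpf}(vii) a weak parking function is maximal iff $f=f^\Pi$ for some $\Pi$, and these vectors are exactly the scores $\ba^{\Pi,0}$ of the acyclic total orientations $\omega_\Pi$; conversely any acyclic total orientation has a topological order $\Pi$ and, being total, realizes $f^\Pi$.

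The main obstacle is the forward direction of (ii): converting the purely combinatorial parking condition into an actual acyclic orientation. Everything hinges on the domination statement Theorem~\ref{thm:wpf}(v), which replaces the awkward ``for every $S$ there exists $v$'' quantifier by a single linear order, after which the deletion argument is routine. The one point demanding care throughout is the bookkeeping of loops: a loop contributes to $\dh$, to $\kap$, and to every score, yet never creates a directed cycle, so it must be carried along in $f^\Pi$, in $\omega_\Pi$, and in the notion of acyclicity without being counted as a cycle.
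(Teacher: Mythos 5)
Your argument is correct, and for parts (ii), (iii), and (v) it is essentially the paper's: the inclusion ``acyclic $\Rightarrow$ parking'' via a topological order of $\Sigma$, the converse via Theorem~\ref{thm:wpf}(v) together with a realization of $f$ by choosing $f(v_i)$ arrows out of each $v_i$ into $\{v_1,\dots,v_i\}$ (you phrase this as deleting arrows from $\omega_\Pi$, the paper as selecting subsets $E_i$ of edges --- the same construction), and (iii) via Theorem~\ref{thm:wpf}(vii). The genuine divergences are in (i) and (iv). For (i) the paper exhibits the partial orientation directly (so your (ii) subsumes its (i)), whereas you deduce $\bff\in\Pol_G$ from $\bff\le f^\Pi$ and downward closedness of the polytope, then invoke Proposition~\ref{prop:scores}; both are fine. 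For (iv) the paper uses the equivalence (1)$\Leftrightarrow$(4) of Theorem~\ref{th:vertices} --- a vertex corresponds to a \emph{unique} partial orientation --- shows by cycle reversal that this orientation is acyclic, and then applies part (ii); you instead use the explicit vertex formula (2), note $\ba^{\Pi,m}\le\ba^{\Pi,0}=f^\Pi$ coordinatewise, and finish with the downward closedness of weak parking functions (Theorem~\ref{thm:wpf}(iii)). Your route is shorter and bypasses both the uniqueness characterization and part (ii), relying instead on the implication (1)$\Rightarrow$(2) of Theorem~\ref{th:vertices}; since the paper proves all four equivalences anyway, nothing is lost. Two things you make explicit that the paper leaves implicit and that are worth stating: the identification of $f^\Pi$ with the score vector of the acyclic total orientation $\omega_\Pi$ (equivalently $f^\Pi=\ba^{\Pi,0}$), and the convention that loops do not count as directed cycles --- without the latter, part (iii) would be vacuous for graphs with loops.
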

\begin{proof}
 (i) Let $f$ be a weak parking function.
 To construct a partial orientation $\Sigma\subset \Eh$ whose
 score vector is equal to the parking vector $\bff=(f(v))_{v\in V}$,
 consider a linear ordering  $\Pi=(v_1,\ldots,v_n)$  of $V$ such that
$$ f(v_i)\le \dh_{V\setminus \{v_1,\ldots,v_{i-1}\}}(v_i).$$
Existence of such ordering is guaranteed by part (v) of Theorem~\ref{thm:wpf}.
Therefore, for each $i\le n=|V|$, we can find a subset  $E_i\subset E$
with $|E_i|=f(v_i)$ edges connecting vertex $v_i$ with vertices in $\{v_1,\ldots,v_i\}$. By orienting each edge in $E_i$ out of $v_i$, we obtain a subset $\Eh_i$ of $\Eh$. Since $E_i\cap E_j=\emptyset$ for $i\ne j$, the disjoint union $\ds \Sigma=\bigsqcup_i \Eh_i$
  is a partial orientation whose score vector is equal to $\bff$.

  (ii) The partial orientation constructed in the proof of part (i) from a weak parking function $f$ is acyclic because its arrows can only go
from $v_i$ to $v_j$ with $j\le i$.

Conversely, let $\ba=(a_v)_{v\in V}\in \Pol_G$ be a partial score vector corresponding to an acyclic partial orientation   $\Sigma\subset \Eh$.
Since $\Sigma$ is acyclic, it induces a partial order $\preceq_\Sigma$ on $V$, namely $u \preceq_\Sigma v $ when there is an oriented path from $v$ to $u$ formed by arrows in $\Sigma$. Let $\Pi$ be a linear ordering extending $\preceq_\Sigma$ and let $f^\Pi$ be the corresponding weak parking function
given by~\eqref{eq:fpi}. Then
\[a_v \le |\{u\in V\col u \preceq_\Sigma v\}|\le f^\Pi(v)
\]
for all $v\in V$. By part (iii) of Theorem~\ref{thm:wpf}, we conclude that $\ba$ is a parking vector.

  (iii) If $f:V\to \Zp$ is a weak parking function whose vector $\bff=(f(v))$ is the score vector of a partial orientation $\Sigma$,
  then $\ds \sum_{v\in V}f(v)=|\Sigma|$. By part (vii) of Theorem~\ref{thm:wpf}, $f$ is maximal exactly when $\ds \sum_{v\in V}f(v)=|E|$
  or, equivalently, when $|\Sigma|=|E|$ i.e.\ when $\Sigma$ is a total orientation.

  (iv) If $\ba=(a_v)_{v\in V}$ is a vertex of $\Pol_G$, then by part (4) of Theorem~\ref{th:vertices} it corresponds to a unique partial orientation $\Sigma\subset \Eh$. If $\Sigma$ had an oriented cycle $(e_1,e_2,\ldots,e_p)$, then, by reversing orientations of all the arrows $e_i, \ i=1,\ldots, p$, we would obtain a different partial orientation
  \[\Sigma'=(\Sigma-\{e_1,\ldots,e_p\})\cup \{e'_1,\ldots,e'_p\}
  \]
  giving the same partial score vector $\ba$, which contradicts uniqueness of $\Sigma$. Therefore, orientation $\Sigma$ is acyclic and, by (ii), $\ba$ is a parking vector.

  (v) Since $\Pol_G$ is a convex polytope, it is a convex hull of the set of its vertices. By (iv), each vertex of  $\Pol_G$ is a parking vector and, by (i), every parking vector of $G$ belongs to $\Pol_G$. This shows that $\Pol_G$ is the convex hull of the set of parking vectors.
\end{proof}

\

The following result is an immediate consequence of part (v) of the above theorem and Proposition~\ref{prop:scores}.
 \begin{corollary}
 The dimension of the external bizonotopal algebra $\BZ_G^e$ of a graph $G$ is equal to the number of lattice points in the convex hull of the set of parking vectors of $G$.
\end{corollary}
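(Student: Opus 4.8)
The plan is to assemble three facts already established in this section into a short chain of equalities, with no new argument needed. First I would recall that, by part~(1) of the Corollary following Proposition~\ref{lem:score}, the dimension of $\BZ_G^e$ equals the number of partial score vectors of $G$. Thus it suffices to identify the set of partial score vectors with the set of lattice points in the convex hull of the parking vectors.

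Next I would identify the partial score vectors with the lattice points of the score vector polytope $\Pol_G$. By the defining equation~\eqref{eq:polytope}, $\Pol_G$ consists of the real points $(a_v)_{v\in V}\in \R^V_{\ge 0}$ satisfying $\sum_{v\in S}a_v\le \kap_S$ for all $S\subset V$; consequently a lattice point of $\Pol_G$ is exactly a vector $\ba\in \Zp^V$ satisfying these same inequalities. By Proposition~\ref{prop:scores}, a vector in $\Zp^V$ satisfies these inequalities if and only if it is a partial score vector of $G$. Hence the set of partial score vectors coincides with $\Pol_G\cap \Z^V$.

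Finally, I would invoke part~(v) of Theorem~\ref{thm:parking}, which asserts that $\Pol_G$ is the convex hull of the set of parking vectors of $G$. Combining this with the previous step, the lattice points of $\Pol_G$ are precisely the lattice points of the convex hull of the parking vectors, so these two finite sets are literally equal and in particular have the same cardinality. Chaining the three identifications yields
\[
\dim \BZ_G^e = \#\{\text{partial score vectors of }G\} = \#(\Pol_G\cap \Z^V),
\]
and the last quantity is the number of lattice points in the convex hull of the parking vectors, which is the assertion.

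Since every step is a direct citation of an earlier result, there is no genuine obstacle. The only point requiring a moment's care is the second step: one must observe that the inequalities~\eqref{eq:polytope} cutting out $\Pol_G$ are verbatim the inequalities~\eqref{eq:score-deg} characterizing partial score vectors, so that $\Pol_G\cap\Z^V$ is exactly the set of partial score vectors rather than merely a superset of it. Everything else is bookkeeping of cardinalities of finite sets.
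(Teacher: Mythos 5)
Your proposal is correct and follows exactly the route the paper takes: the paper derives this corollary as an immediate consequence of part~(v) of Theorem~\ref{thm:parking} and Proposition~\ref{prop:scores}, combined with the earlier identification of $\dim \BZ_G^e$ with the number of partial score vectors. You have simply made the chain of citations explicit, which matches the intended argument.
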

\qed

In particular, for a complete graph $K_n$ on $n$ vertices,
$\dim \BZ^e_{K_n}$ is equal to the number of lattice points in the \emph{parking functions polytope} $\Pol_n$ which was studied in several recent papers (cf.~\cite{AW} and~\cite{HLVM}).

\section{$r$-bizonotopal algebras and loopy deletion-contraction}\label{sec:r-bizon}

\subsection{Definition of $r$-bizonotopal algebras}\label{ssec:r-bi}

Here we consider bizonotopal analogs of central and internal zonotopal algebras (see~\cite{HoRo}). They are members of a more general family which we call $r$-bizonotopal and introduce below.

\medskip

\noindent
Let
\[\delta_G:=\min_{v\in V}\kap_v\]
be the smallest number of edges incident to a vertex of $G$.
(Recall that loops are only counted once in $\kap_v$, and therefore, in general, $\delta_G$ is not the same as the minimal degree of a vertex in $G$.)

For $r\in \Z$ and a subset $S\subset V$ consider, similarly to~\eqref{eq:relat}, the set of monomials in the polynomial ring $\K[V]:=\K[z_v\col v\in V]$ given by
  \begin{equation}
    \label{eq:r-relat}
    \fM^{(r)}_S=\Bigl\{\prod_{v\in S}z_v^{a_v}\col \sum_v a_v=\kap_S+r\Bigr\}
  \end{equation}
  in variables corresponding to vertices $v\in S$ and of total degree $\kap_S+r$.

  \begin{definition}
    \label{def:r-bizonot}
Let $G=(V,E)$ be a graph and fix an integer $r\ge -\delta_G$.
The $r$-\emph{bizonotopal algebra} of $G$ is the quotient algebra
  \begin{equation}
    \label{eq:r-alg}
    \BZ_G^{(r)}:=\K[V]/\fI_G^{(r)},
  \end{equation}
where
  \begin{equation}
    \label{eq:r-ideal}
    \fI_G^{(r)}:=\Big( \bigcup_{\emptyset \ne S\subset V}\fM_S^{(r)}  \Big)\subset  \K[V]
  \end{equation}
  is the ideal  generated by the monomials from $\fM_S^{(r)}$ for all nonempty subsets $S\subset V$.
\

\medskip

According to Theorem~\ref{th:monom}, the algebra  $\BZ_G^{(1)}$ is isomorphic
to the external bizonotopal algebra $\BZ_G^e$. By analogy with the usual zonotopal algebras~\cite{HoRo}, we call the algebras
\begin{equation}
  \label{eq:central}
  \BZ_G^c:=  \BZ_G^{(0)}
\end{equation}
and
\begin{equation}
  \label{eq:internal}
  \BZ_G^i:=  \BZ_G^{(-1)}
\end{equation}
the \emph{central} and \emph{internal} bizonotopal algebras of $G$  respectively. We will discuss their special properties in more detail in Sections~\ref{subsec:ce} and~\ref{sec:in}.

\medskip

Algebras $\BZ_G^{(r)}$ corresponding to $r>1$ are called \emph{superexternal} and the ones with $r<-1$ are called \emph {subinternal}.
\end{definition}

\medskip

In particular, for a graph $G$ with one vertex and $\ell$ loops, the $r$-bizonotopal algebra
is isomorphic to a truncated polynomial algebra
\begin{equation}
  \label{eq:onevert}
  \BZ_G^{(r)}\simeq \K[z]/(z^{\ell+r}).
\end{equation}

\medskip

The next result extends Theorem~\ref{thm:isom} to show that superexternal bizonotopal algebras are complete graph invariants without the need to exclude isolated vertices.

\begin{theorem}\label{thm:isom:super}
For any graphs $G_1=(V_1,E_1)$ and $G_2=(V_2,E_2)$ and integer $r\ge 2$, the superexternal algebras $\BZ_{G_1}^{(r)}$ and $\BZ_{G_2}^{(r)}$ are isomorphic if and only if the graphs $G_1$ and $G_2$ are isomorphic.
\end{theorem}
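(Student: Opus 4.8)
The plan is to adapt the proof of Theorem~\ref{thm:isom} (the case $r=1$), replacing the partial orientation algebra $\cEh_G$, which is unavailable for general $r$, by a direct analysis of the monomial quotient $\BZ_G^{(r)}=\K[V]/\fI_G^{(r)}$. As there, only the implication ``algebra isomorphism $\Rightarrow$ graph isomorphism'' is nontrivial, and the graded-versus-ungraded equivalence is handled exactly as in Theorem~\ref{thm:isom} (cf.~\cite{BZ}), so I work with the graded algebra and its degree-one component, which is intrinsically $\fm/\fm^2$ for the unique maximal ideal $\fm$. The decisive new feature of the regime $r>1$ is that every relation in $\fM_S^{(r)}$ has degree $\kap_S+r\ge 2$, so each generator $z_v$ survives in degree one even when $\kap_v=0$; hence $\dim(\BZ_G^{(r)})^{(1)}=|V|$ for \emph{all} graphs, which is exactly why (in contrast with Theorem~\ref{thm:isom}) no hypothesis on isolated vertices is needed, and the number of vertices is read off immediately.

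The central computation is the nilpotency order of a degree-one element. Write $\ord(u)$ for the largest $m$ with $u^m\neq 0$. I claim that for $u=\sum_v c_v z_v$ with support $T=\{v\col c_v\neq 0\}$ one has $\ord(u)=\kap_T+r-1$. Since $\BZ_G^{(r)}$ is monomial, $u^m\neq 0$ if and only if some monomial $z^{\ba}$ with $|\ba|=m$ and $\mathrm{supp}(\ba)\subseteq T$ is nonzero in the algebra, i.e.\ lies in the shifted polytope $\Pol_G^{(r)}:=\{\ba\ge 0\col \sum_{v\in S}a_v\le \kap_S+r-1 \text{ for all } \emptyset\neq S\subseteq V\}$. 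The bound $\ord(u)\le \kap_T+r-1$ is immediate from the inequality for $S=T$. For the reverse inequality I exhibit an explicit lattice point: taking an ordering $J$ of $T$ and the maximal partial score vector $\ba^{J}$ of Theorem~\ref{th:vertices}(3), which is supported on $T$, satisfies the $r=1$ inequalities, and has weight $\kap_T$, I set $\ba=\ba^{J}+(r-1)\be_{v_0}$ for any $v_0\in T$; then $\sum_{v\in S}a_v\le \kap_S+(r-1)$ for every $\emptyset\neq S$ and $|\ba|=\kap_T+r-1$. Thus $\ord(u)$ depends only on $\mathrm{supp}(u)$ through $\kap$.

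Next I pin down the vertex data via a minimizing basis. For any basis $(u_1,\dots,u_n)$ of the degree-one space with supports $T_i$, nonvanishing of the determinant of the coefficient matrix yields a system of distinct representatives, i.e.\ a bijection $\sigma\col[n]\to V$ with $\sigma(i)\in T_i$. Monotonicity of $\kap$ then gives $\sum_i\ord(u_i)=\sum_i\kap_{T_i}+n(r-1)\ge \sum_i\kap_{\sigma(i)}+n(r-1)=\sum_v\kap_v+n(r-1)$, with equality for the standard basis $(z_v)$. Hence the minimal value of $\sum_i\ord(u_i)$ equals $\sum_v\kap_v+n(r-1)=2|E|-\ell+n(r-1)$, and for a minimizing basis equality forces $\kap_{T_i}=\kap_{\sigma(i)}$ for every $i$. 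Since $\sigma(i)\in T_i$ and $\kap$ is the cardinality of an incidence set, $\kap_{T_i}=\kap_{\sigma(i)}$ means that the set of edges incident to $T_i$ coincides with the set of edges incident to the single vertex $v_i:=\sigma(i)$.

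Fixing a minimizing basis and reindexing $V=(v_1,\dots,v_n)$ by $\sigma$, I reconstruct $G$ as follows. First $\kap_{v_i}=\ord(u_i)-(r-1)$. Next, because the incidence set of $T_i$ equals that of $v_i$ (and likewise for $j$), the incidence set of $T_i\cup T_j$ equals that of $\{v_i,v_j\}$, so $\kap_{T_i\cup T_j}=\kap_{\{v_i,v_j\}}$; for generic $\lambda\in\K$ no coordinate of $u_i+\lambda u_j$ vanishes, whence $\mathrm{supp}(u_i+\lambda u_j)=T_i\cup T_j$ and $\ord(u_i+\lambda u_j)=\kap_{\{v_i,v_j\}}+r-1$. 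The number of edges between $v_i$ and $v_j$ is therefore
\[
 m_{v_iv_j}=\kap_{v_i}+\kap_{v_j}-\kap_{\{v_i,v_j\}}=\ord(u_i)+\ord(u_j)-\ord(u_i+\lambda u_j)-(r-1),
\]
and the number of loops is $\ell(v_i)=\kap_{v_i}-\sum_{j\neq i}m_{v_iv_j}$. These data determine $G$ up to isomorphism. I expect the main subtlety to be precisely the point that makes this robust: a minimizing basis need \emph{not} consist of scalar multiples of the $z_v$ (for two vertices joined only by parallel edges, both $z_v+z_w$ and $z_v-z_w$ may appear in a minimizer), so singleton supports cannot be assumed. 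The resolution is that minimality forces only the weaker equality of incidence sets $\kap_{T_i}=\kap_{v_i}$, which is exactly enough to guarantee $\kap_{T_i\cup T_j}=\kap_{\{v_i,v_j\}}$ and hence the validity of the pairwise formula for an arbitrary minimizer; the achievability step in $\ord(u)=\kap_{\mathrm{supp}(u)}+r-1$ and the genericity of $\lambda$ are the remaining routine points.
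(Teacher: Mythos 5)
Your proposal is correct and follows the same strategy the paper intends: the paper's proof of this theorem is just the one-line assertion that the argument of Theorem~\ref{thm:isom} carries over, and your write-up is precisely that adaptation, reconstructing $G$ from the nilpotency orders of degree-one elements via a minimizing basis. The one genuinely new ingredient you supply --- the identity $\ord(u)=\kap_{\mathrm{supp}(u)}+r-1$, with achievability via the vertex $\ba^{J}$ of the score polytope shifted by $(r-1)\be_{v_0}$ --- is exactly the detail the paper leaves implicit, and your version has the added merit of working directly in the monomial quotient (so isolated vertices cause no trouble, matching the absence of that hypothesis in the statement).
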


\begin{proof}
  For isomorphic graphs $G_1$ and $G_2$, the algebras $\BZ_{G_1}^{(r)}$ and $\BZ_{G_2}^{(r)}$ are clearly isomorphic, so we only need to prove the converse statement.

  Assume that $\BZ_{G_1}^{(r)}\simeq\BZ_{G_2}^{(r)}$. Then, as explained in the beginning of the proof of Theorem~\ref{thm:isom}, these algebras are isomorphic as graded algebras and we have
  \[ \dim (\BZ_{G_1}^{(r)})^{(1)}=\dim (\BZ_{G_2}^{(r)})^{(1)}.\]
  Let $\bar{I}=(\bar{z}_v)_{v\in V_1}\subset \BZ_{G_1}^{(r)}$ be the image of the ideal
  $I=(z_v)_{v\in V_1}\subset \K[V_1]$.
  It is the maximal ideal of the Artinian local ring $\BZ_{G_1}^{(r)}$. Since $r\ge 2$, the relation ideal $\fI_{G_1}^{(r)}\subset \K[V_1]$ is contained in $I^2$. This impies that
  \[\dim (\BZ_{G_1}^{(r)})^{(1)} = \dim \bar{I}/(\bar{I})^2=|V_1|.\] Thus, the number of vertices of $G_1$ including isolated vertices can be recovered from the algebra $\BZ_{G_1}^{(r)}$. Now the rest of the proof proceeds similar to the proof of Theorem~\ref{thm:isom}.
 \end{proof}

\subsection{Loopy deletion-contraction}
\label{sec:del-contr}

In this subsection, we show that the Hilbert series of the central, external, and superexternal bizonotopal algebras satisfy a deletion--contraction relation analogous to the classical one, allowing them to be computed recursively.

\begin{definition}
Given a graph $G=(V,E)$ and a non-loop edge $e\in E$, we consider two operations, \emph{deletion} and \emph{loopy contraction}, producing two new graphs:
\begin{itemize}
\item
$G-e$, the graph with the same vertex set as $G$ and with the edge $e$ removed from the edge set;
\item
$G/e$, the graph obtained from $G$ by identifying the endpoints of the edge $e$ \emph{without deleting it}, so that the edge $e$, as well as all other edges connecting its endpoints, become loops.
\end{itemize}
\end{definition}

\begin{remark}
  Our loopy contraction operation differs from the contraction operation familiar from the theory of the Tutte polynomial and its variants.
  It does not remove any edges; instead, it turns them into loops, so terms such as ``looping'' or ``loopification'' might be more descriptive.
  Nevertheless, we retain the traditional terminology to emphasize the formal similarity of our construction with the Tutte deletion--contraction framework.

  As we will show elsewhere~\cite{KNSV}, the loopy deletion--contraction relation~\eqref{eq:del-contr} gives rise to a new multivariable graph polynomial whose structural properties are analogous to those of the Tutte polynomial and Stanley's chromatic symmetric function, while being inequivalent to either.

\end{remark}

For a graph $G$, let $h^r(t)$ denote the Hilbert series of its $r$-bizonotopal algebra~\eqref{eq:r-alg}, that is, the generating function of the dimensions of the homogeneous components of $\BZ_G^{(r)}$:
\[
h^r(t)=\sum_{n\ge 0}\dim (\BZ_G^{(r)})^{(n)}\, t^n.
\]

\begin{theorem}\label{thm:delcont}
For $r\ge 0$ and a non-loop edge $e\in E$ of a graph $G=(V,E)$, the Hilbert series $h^r(t)$ of the $r$-bizonotopal algebras of the graphs $G$, $G-e$, and $G/e$ satisfy the following \textbf{loopy deletion--contraction relation}:
\begin{equation}\label{eq:del-contr}
  h_G^r(t)=h_{G/e}^r(t)+t\cdot h_{G-e}^r(t).
\end{equation}
\end{theorem}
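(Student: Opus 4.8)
The plan is to exploit the fact that each $r$-bizonotopal algebra is \emph{monomial}: since $\fI_G^{(r)}$ is generated by the monomials in the sets $\fM_S^{(r)}$, the quotient $\BZ_G^{(r)}$ has a basis consisting of the \emph{standard monomials} $z^\ba=\prod_v z_v^{a_v}$ that are not divisible by any generator. Exactly as in the proof of Theorem~\ref{th:monom}, such a monomial fails to be divisible by any element of $\fM_S^{(r)}$ precisely when $\sum_{v\in S}a_v\le \kap_S+r-1$ for every nonempty $S\subset V$. Hence $h_G^r(t)=\sum_\ba t^{|\ba|}$, the sum over the set $\mathcal{S}_G^r\subset\Zp^V$ of all such exponent vectors, and the theorem becomes a weighted bijective identity among the three lattice-point sets $\mathcal{S}_G^r$, $\mathcal{S}_{G-e}^r$, $\mathcal{S}_{G/e}^r$.

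Next I would record how the degree function changes under the two operations on $e=\{p,q\}$, writing $W=V\setminus\{p,q\}$ and $w$ for the merged vertex of $G/e$. Deletion gives $\kap'_S=\kap_S-1$ when $S$ meets $\{p,q\}$ and $\kap'_S=\kap_S$ otherwise; loopy contraction gives $\kap''_T=\kap_T$ for $T\subset W$ and $\kap''_{T_0\cup\{w\}}=\kap_{T_0\cup\{p,q\}}$ for $T_0\subset W$. With these in hand I would introduce two maps. The \emph{merge map} $\phi\col \mathcal{S}_G^r\to\mathcal{S}_{G/e}^r$ sends $(\mathbf{c},a_p,a_q)\mapsto(\mathbf{c},a_p+a_q)$ (here $\mathbf{c}$ is the restriction to $W$); comparing the constraint systems shows at once that $\phi$ is well defined and weight-preserving, since the constraints of $\mathcal{S}_{G/e}^r$ on subsets $T\subset W$ and on subsets containing $w$ are literally the $G$-constraints on $S\subset W$ and on $S\supseteq\{p,q\}$. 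The \emph{shift map} $\psi\col\mathcal{S}_{G-e}^r\to\mathcal{S}_G^r$ sends $\ba'\mapsto\ba'+\be_p$; because $\kap_S=\kap'_S+1$ on every $S$ containing $p$, one checks that $\psi$ lands in $\mathcal{S}_G^r$, is injective, and raises weight by $1$. Let $\mathcal{B}:=\psi(\mathcal{S}_{G-e}^r)$.

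The core of the argument is to show that $\phi$ restricts to a weight-preserving bijection $\mathcal{S}_G^r\setminus\mathcal{B}\to\mathcal{S}_{G/e}^r$; granting this, summing $t^{|\cdot|}$ over $\mathcal{S}_G^r=(\mathcal{S}_G^r\setminus\mathcal{B})\sqcup\mathcal{B}$ yields $h_G^r(t)=h_{G/e}^r(t)+t\,h_{G-e}^r(t)$. Fix $\mathbf{b}=(\mathbf{c},b_w)\in\mathcal{S}_{G/e}^r$; its $\phi$-fiber consists of the splittings $a_p+a_q=b_w$ with $a_p\le A$ and $a_q\le B$, where $A$ and $B$ are the minima over $S_0\subseteq W$ of $\kap_{S_0\cup\{p\}}+r-1-\sum_{S_0}c_v$ and of $\kap_{S_0\cup\{q\}}+r-1-\sum_{S_0}c_v$ respectively (the remaining $G$-constraints on the fiber being automatic from $\mathbf{b}$). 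A direct check of the $(G-e)$-constraints shows that a fiber element lies in $\mathcal{B}$ iff $a_p\ge 1$ and $a_q\le B-1$; thus the fiber element with $a_q=\min(b_w,B)$ is the unique one outside $\mathcal{B}$, which gives injectivity of $\phi|_{\mathcal{S}_G^r\setminus\mathcal{B}}$ and accounts for every remaining fiber element via $\mathcal{B}$.

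What remains — and what I expect to be the main obstacle — is surjectivity of $\phi$, i.e.\ nonemptiness of every fiber, which amounts to the inequality $A+B\ge b_w$. I would prove this from submodularity of $\kap$ (Lemma~\ref{lem:submodular}): choosing minimizers $S_0,S_1$ for $A,B$ and applying $\kap_{S_0\cup\{p\}}+\kap_{S_1\cup\{q\}}\ge \kap_{S_0\cup S_1\cup\{p,q\}}+\kap_{S_0\cap S_1}$ together with the $\mathbf{b}$-constraint on $T_0=S_0\cup S_1$ reduces everything to a short computation. The delicate point is the boundary case $r=0$ with $S_0\cap S_1=\emptyset$, where the naive bound falls short by $1$; here one uses that the edge $e$ is incident to both $S_0\cup\{p\}$ and $S_1\cup\{q\}$, so the submodular inequality is in fact strict by at least $1$ (as in the proof of Lemma~\ref{lem:submodular}), and this extra unit of slack is exactly what makes the relation hold down to $r=0$. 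For $r\ge 1$ the function $S\mapsto\kap_S+r-1$ is genuinely submodular and the estimate is immediate, so it is only at $r=0$ that the presence of the contracted edge is essential.
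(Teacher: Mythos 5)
Your proposal is correct and is in substance the paper's own proof: after fixing the coordinates away from $p,q$, your quantities $A$, $B$ and the fiber condition $a_p+a_q=b_w$ are exactly the paper's $a$, $b$, $c'$, your "exactly one fiber element outside $\mathcal{B}$" is its count $\min(a,c')-\max(0,c'-b)+1$ versus the shifted system, and the crucial inequality $A+B\ge b_w$ is established by the same submodularity argument, including the extra unit of slack coming from the edge $e$ that rescues the case $r=0$ with disjoint minimizers. The only difference is presentational (an explicit merge/shift bijection instead of a direct lattice-point count).
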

\begin{proof}
By Definition~\ref{def:r-bizonot}, the $r$-bizonotopal algebra $\BZ_G^{(r)}$ has a homogeneous basis consisting of monomials
\begin{equation}
  \label{eq:monoms}
  \mu=\prod_{v\in V} z_v^{d_v}\in \K[V]
  \end{equation}
satisfying
\begin{equation}
  \label{eq:bounds}
  \sum_{v\in I}d_v< \kap_I+r
\end{equation}
for all nonempty $I \subset V$. We call such monomials \emph{admissible}.

\medskip
We first treat the simplest case of a two-vertex graph $G$, to which the general case will later be reduced.
Let $G$ be a graph with vertices $p$ and $q$ having $\ell_p$ and $\ell_q$ loops respectively and $m$ edges from $p$ to $q$, for example
\quad
$$G= \parbox[c]{1.5in}{  \begin{tikzpicture}[scale=.9]
    \coordinate (A) at (0, 0);
    \coordinate (B) at (2, 0);
    \draw (A) -- (B);
    \draw (A) to[out=30,in=150] (B);
    \draw (A) to[out=-30,in=-150] (B);
    \draw (A) to[out=45,in=135,loop, min distance=1.5cm] (A);
    \draw (B) to[out=45,in=135,loop, min distance=1.5cm] (B);
    \draw (B) to[out=-45,in=-135,loop, min distance=1.5cm] (B);
    \filldraw[black] (A) circle (2pt) node[left] {\small p};
    \filldraw[black] (B) circle (2pt) node[right] {\small q};
  \end{tikzpicture}
}
  $$
  The degrees $\kap_I$ for the nonempty subsets of $V$ are
  $$ \kap_p=\ell_p+m, \ \kap_q=\ell_q+m, \text{ and } \kap_{\{p,q\}}=\ell_p+\ell_q+m.
  $$
  Using the notation $s=m+r-1$, the inequalities~\eqref{eq:bounds} show that the admissible monomials $z_p^xz_q^y $ of the algebra $\BZ^{(r)}_G$ correspond to integer points $(x,y)$ in the polygonal region of the plane
\begin{equation}
  \label{eq:region}
  R = \{(x,y)\in \R^2\col 0\le x\le \ell_p+s, \ 0\le y\le \ell_q+s, \
  x+y\le \ell_p+\ell_q+s\}
\end{equation}
shown on the figure below. (We know that the line $x+y=\ell_p+\ell_q+s$ intersects the region $x\le \ell_p+s, y\le \ell_q+s$ because $s\ge 0$.)

\medskip

\begin{tikzpicture}[scale=0.8],every node/.style={font=\small}]
  \fill[pattern=north east lines,pattern color=black!30]
  (0,1) -- (5,1) -- (5,4) -- (2,7) -- (0,7) -- cycle;
  \draw[->] (-0.2,0) -- (7.2,0) node[right, font=\large] {$x$};
  \draw[->] (0,-0.2) -- (0,8.2) node[left, font=\large] {$y$};
\node[below left, font=\large] at (0,0) {$0$};
\node[left,color=orange,font=\large] at (0,7) {$R$};
\node[below left,font=\large, color=blue] at (0,6) {$R'$};
\node[above left,font=\large] at (5,1) {$R''$};
  \draw[dashed] (6,0) -- (6,3.8) node[right] {$x=\ell_p+s$};
  \draw[dashed] (0,7) -- (3.2,7) node[right] {$y=\ell_q+s$};
  \draw[dashed] (1.5,7.5) -- (6.5,2.5) node[right] {$x+y=\ell_p+\ell_q+s$};
  \draw[dotted] (5,0) -- (5,4.9) node[right] {$x=\ell_p+s-1$};
  \draw[dotted] (0,6) -- (3.9,6) node[ right ] {$y=\ell_q+s-1$};
  \draw[dotted] (0.3,7.7) -- (6.7,1.3) node[right ] {$x+y=\ell_p+\ell_q+s-1$};
  \draw[very thick,color=orange] (0,0) -- (6,0) -- (6,3) -- (2,7) -- (0,7) -- cycle;
  \draw[thick](0,1) -- (5,1) -- (5,4);
  \draw[very thick,color=blue] (5,0) -- (5,3) -- (2,6) -- (0,6) --(0,0)--cycle;
  \foreach \x in {0,...,5} {
    \fill[color=red] (\x,0) circle (2.2pt);
    \foreach \y in {1,...,7} {
      \pgfmathtruncatemacro{\s}{\x+\y}
      \ifnum\s<10
        \fill (\x,\y) circle (2.2pt);
      \fi
    }
  }
  \foreach \y in {0,...,3} { \fill[color=red] (6,\y) circle (2.2pt);}
\end{tikzpicture}

\medskip

Since for the graph $G'=G-e$ we have $m'=m-1$ and so $s'=s-1$,
the admissible monomials for $\BZ_{G'}$ correspond to integer points in the region $R'$ obtained from $R$ by shifting the bounding lines $x=\ell_p+s, \ y=\ell_q+s$ and $x+y=\ell_p+\ell_q+s$ by one unit towards the origin.
Thus, multiplication by $z_q$ sends an admissible monomial $\mu=z_p^xz_q^y$ for $\BZ_{G'}$ to an admissible monomial $z_p^xz_q^{y+1}$ for $\BZ_{G}$ giving a linear embedding
$$j:\BZ_{G'}\hookrightarrow \BZ_{G}$$ of degree $+1$. Geometrically, this map corresponds to shifting $R'$ upwards by $1$ producing the shaded area $R''$ inside $R$.
Consequently, the quotient space $\BZ_{G}/j(\BZ_{G'})$ has a homogeneous basis consisting of the monomials
\begin{equation}
  \label{eq:quot-basis}
  1,z_p,\ldots,z_p^{\ell_p+s},z_p^{\ell_p+s}z_q, \ldots, z_p^{\ell_p+s}z_q^{\ell_q}
\end{equation}
which correspond to the lattice points in $R\setminus R''$ (shown as red dots on the diagram).
Among these $\ell_p+\ell_q+s+1$ monomials there is exactly one monomial of each total degree between $0$ and $\ell_p+\ell_q+s$.

On the other hand,  $G/e$ is a one-vertex graph with $\ell_p+\ell_q+m$ loops. Thus by~\eqref{eq:onevert} we have
$$
\BZ^{(r)}_{G/e}\simeq \K[z]/(z^{\ell_p+\ell_q+m+r})=\K[z]/(z^{\ell_p+\ell_q+s+1}).
$$
Therefore the algebra $\BZ^{(r)}_{G/e} $ has the same Hilbert series as  the graded space $\BZ_{G}/j(\BZ_{G'})$.
This proves the equality~\eqref{eq:del-contr} for graphs with two vertices.

\medskip

The preceding argument yields the following elementary counting statement, which we will use below in the proof of the general case.

Given non-negative integers $a, b, c$ such that $c\le a+b$, let $N(a,b,c)$ be the number of points $(x,y)\in \Z^2$ satisfying $0\le x\le a, 0\le y\le b, x+y=c$.
Then
\begin{equation}
  \label{eq:recursion}
  N(a,b,c)-N(a-1,b-1,c-1)=1.
\end{equation}

\medskip

Now let $G$ be an arbitrary graph with $|V|>2$ vertices, and let $p,q\in V$ be the endpoints of the non-loop edge $e$. Denote by $w$ the vertex of the graph $G/e$ obtained by identifying $p$ and $q$.
To prove the Hilbert series relation~\eqref{eq:del-contr} for the graphs $G, \ G-e$, and $G/e$,
we compare admissible monomials for the algebras $\BZ^{(r)}_G , \ \BZ^{(r)}_{G-e}$, and $\BZ^{(r)}_{G/e}$, degree by degree.

For every subset of vertices $I\subset V$ not containing $p$ or $q$, the degrees $\kap_I$  are the same for each of the  graphs $G, \ G-e$, and $G/e$. Therefore the sets of admissible monomials~\eqref{eq:monoms} with $d_p=d_q=0$, i.e.\ monomials  of the kind
\begin{equation}
  \label{eq:mon-fixed}
  \mu=\prod_{v\in V\setminus\{p,q\}} z_v^{d_v},
\end{equation}
for these three graphs coincide.

If we fix the exponents $d_v$ of an admissible monomial~\eqref{eq:mon-fixed} for all vertices $v\in V\setminus \{p,q\}$, then we see from~\eqref{eq:bounds} that the monomial
\[
\widetilde{\mu}=\mu\, z_p^{d_p}z_q^{d_q}
\]
is admissible for $\BZ^{(r)}_G$ if and only if the exponents $d_p$ and $d_q$ satisfy the inequalities
\begin{equation}
  \label{eq:ineqs}
  0\le d_p\le a,\qquad 0\le d_q\le b,\qquad d_p+d_q\le c,
\end{equation}
where
\begin{equation}
  \label{eq:abc}
  \begin{split}
    a&=\min_{I\subset V\setminus\{p,q\}} \kap_{I\cup\{p\}}+r-1-\sum_{v\in I} d_v\,,    \\
    b&=\min_{I\subset V\setminus\{p,q\}} \kap_{I\cup\{q\}}+r-1-\sum_{v\in I} d_v\,,\\
    c&=\min_{I\subset V\setminus\{p,q\}} \kap_{I\cup\{p,q\}}+r-1-\sum_{v\in I} d_v\,.
  \end{split}
\end{equation}
Thus, for fixed $(d_v)_{v\in V\setminus \{p,q\}}$, admissible monomials correspond to integer points in the planer polygon cut out by the inequalities~\eqref{eq:ineqs}.

Deleting the edge $e$ decreases each of the degrees
$\kap_{I\cup\{p\}}$, $\kap_{I\cup\{q\}}$, and $\kap_{I\cup\{p,q\}}$ by one.
Consequently, the monomial $\widetilde \mu$ is admissible for  $\BZ_{G-e}^{(r)}$ if and only if
\begin{equation*}
  \label{eq:ineqs-del}
  0\le d_p\le a-1,\qquad 0\le d_q\le b-1,\qquad d_p+d_q\le c-1.
\end{equation*}
Geometrically, the admissible region for $G-e$ is obtained from that for $G$ by shifting each bounding hyperplane by one unit towards the origin.

For the contracted graph $G/e$, the variables $z_p$ and $z_q$ are replaced by a single variable $z_w$, and the monomial
$\mu\, z_w^{d_w}$
is admissible for $\BZ_{G/e}^{(r)}$ if and only if
$  0\le d_w\le c.$

Thus to compare the Hilbert series and prove~\eqref{eq:del-contr} it suffices to show that for each $c'\in \{0,1,\ldots,c\}$,
the number of integer points $(d_p,d_q)$ satisfying
\begin{equation*}\label{eq:ineq1}
  0\le d_p\le a,\qquad 0\le d_q\le b,\qquad d_p+d_q=c',
\end{equation*}
exceeds by one the number of integer solutions of
\begin{equation*}\label{eq:ineq2}
  0\le d_p\le a-1,\qquad 0\le d_q\le b-1,\qquad d_p+d_q=c'-1.
\end{equation*}

This will follow from~\eqref{eq:recursion} when we establish that $c\le a+b$.
Indeed, let $J_p$ and $J_q$ be subsets of $V\setminus\{p,q\}$ on which the minima defining $a$ and $b$ in~\eqref{eq:abc} are attained.
By Lemma~\ref{lem:submodular}, the function $\kap$  for the graph $G-e$ is submodular.
Therefore,
\[
c\le \kap_{J_p\cup J_q}\le \kap_{J_p}+\kap_{J_q} \le a+b,
\]
which proves the required inequality.
\end{proof}

Theorem~\ref{thm:delcont} has the following consequence for the  external and the central algebras (i.e.\ for $r=1$ and $r=0$, resp.)

\begin{theorem}\label{delcont:ext-cen}
 Let $h_G(t)$ denote the Hilbert series of the central ($r=0$) or external ($r=1$) bizonotopal algebra $\BZ_G^r$ of a graph $G$.  As a function on graphs $h_G(t)$ is uniquely characterized by the following properties:
  \begin{enumerate}[(i)]
  \item {\sf loopy deletion-contraction}:
$$
  h_G(t)=h_{G/e}(t)+t\cdot h_{G-e}(t),
$$
if $e$ is a non-loop edge of $G$;
\item {\sf multiplicativity}:
  \begin{equation}
    \label{eq:multiplic}
   h_{G_1 \sqcup G_2}(t)=h_{G_1}(t) \cdot h_{G_2}(t),
 \end{equation}
\item {\sf initial conditions}:
  \begin{equation}
    \label{eq:n-loops}
  h_{L_n}(t)=
  \begin{cases}\ds
    1+t+\dots + t^{n-1}=\frac{1-t^n}{1-t} ,  & \text{ if } r=0,  \\
\\
    \ds    1+t+\dots + t^n=\frac{1-t^{n+1}}{1-t} , & \text{ if } r=1,
\end{cases}
\end{equation}
where $L_n$ is a one-vertex graph with $n$ loops.
\end{enumerate}
\end{theorem}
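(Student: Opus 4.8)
The plan is to establish the theorem in two stages: first that $h_G(t)$ satisfies properties (i)–(iii), and then that these properties determine it uniquely. Property (i) is already available for all $r\ge 0$ from Theorem~\ref{thm:delcont}, so in the first stage only (ii) and (iii) need work, and I would handle both through the monomial-basis description of $\BZ^{(r)}_G$. Since $\fI^{(r)}_G$ is generated by the monomials in the sets $\fM^{(r)}_S$, a monomial $\prod_{v\in V}z_v^{a_v}$ survives in $\BZ^{(r)}_G$ exactly when it is divisible by none of these generators, that is, precisely when $\sum_{v\in S}a_v\le \kap_S+r-1$ for every nonempty $S\subset V$. These \emph{standard monomials} form a basis, so $h_G^r(t)$ counts them by total degree. (For $r=1$ this recovers the partial-score-vector description of Proposition~\ref{prop:scores}.)

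For the initial conditions (iii), in $L_n$ the only nonempty subset is the single vertex $v$ with $\kap_v=n$, so $\fI^{(r)}_{L_n}=(z_v^{n+r})$ and $\BZ^{(r)}_{L_n}=\K[z_v]/(z_v^{n+r})$ has Hilbert series $1+t+\cdots+t^{n+r-1}$, which is exactly the claimed expression for $r=0,1$ (an empty sum when $n=0$, $r=0$, consistent with $h_{L_0}=0$). For multiplicativity (ii), I would use that $\kap$ is additive across a disjoint union, $\kap_{S_1\sqcup S_2}=\kap_{S_1}+\kap_{S_2}$, and compare the standard-monomial condition for $G_1\sqcup G_2$ with those for the two factors. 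A monomial is standard on each factor separately iff $\sum_{v\in S_1}a_v\le\kap_{S_1}+r-1$ and $\sum_{v\in S_2}a_v\le\kap_{S_2}+r-1$; summing gives $\sum_{v\in S_1\sqcup S_2}a_v\le\kap_{S_1}+\kap_{S_2}+2(r-1)$, and for $r\le 1$ (in particular $r\in\{0,1\}$) the inequality $2(r-1)\le r-1$ makes this imply the disjoint-union bound $\le\kap_{S_1\sqcup S_2}+r-1$. The converse follows by taking one part empty, so the two sets of conditions coincide, and the degree-preserving bijection $(\mu_1,\mu_2)\mapsto\mu_1\mu_2$ yields $h_{G_1\sqcup G_2}=h_{G_1}\cdot h_{G_2}$.

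For uniqueness I would induct on the number of non-loop edges of $G$. Let $\tilde h$ be any graph function satisfying (i)–(iii). In the base case (no non-loop edges) the graph is a disjoint union of one-vertex loop-graphs $L_{n_i}$ (isolated vertices being $L_0$), so (ii) and (iii) force $\tilde h_G=\prod_i\tilde h_{L_{n_i}}=\prod_i h_{L_{n_i}}=h_G$, using that $h$ itself satisfies (ii) and (iii) from the first stage. For the inductive step, choose a non-loop edge $e$; both $G-e$ and $G/e$ have strictly fewer non-loop edges, since loopy contraction turns $e$ (and any edge parallel to it) into a loop while creating no new non-loop edges. By the induction hypothesis $\tilde h$ and $h$ agree on $G-e$ and $G/e$, and applying (i) to both gives $\tilde h_G=\tilde h_{G/e}+t\,\tilde h_{G-e}=h_{G/e}+t\,h_{G-e}=h_G$.

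The genuinely delicate point is the multiplicativity step, and specifically the threshold $r\le 1$: the summed single-factor bounds imply the mixed-subset bound only because of $2(r-1)\le r-1$, which fails for $r>1$. This is precisely why the characterization is stated only for the central and external cases; for superexternal algebras the disjoint-union algebra is strictly smaller than the tensor product $\BZ^{(r)}_{G_1}\otimes\BZ^{(r)}_{G_2}$ and multiplicativity breaks down. Everything else is routine bookkeeping: the reduction to standard monomials, the one-vertex computation, and the observation that loopy contraction strictly decreases the non-loop edge count.
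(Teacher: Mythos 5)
Your proposal is correct and follows essentially the same route as the paper: deletion--contraction from Theorem~\ref{thm:delcont}, multiplicativity for $r\le 1$, and reduction to the one-vertex loop graphs $L_n$. You merely supply the details the paper leaves implicit --- the standard-monomial verification of multiplicativity (with the correct threshold $2(r-1)\le r-1$, i.e.\ $r\le 1$) and the explicit induction on the number of non-loop edges for uniqueness --- both of which check out.
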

\begin{proof}
The first part is a special case of Theorem~\ref{thm:delcont} which establishes the loopy deletion-contraction relation~\eqref{eq:del-contr} for $h_G^r(t)$ for $r\ge 0$.
It is easy to see that the multiplicative property~\eqref{eq:multiplic} holds for $h_G^r$ for all $r\le 1$.
Thus, for $r=0,1$, the computation of $h_G^r(t)$ reduces to using its values on the $n$-loop graph $L_n$, which are given by~\eqref{eq:n-loops}.
\end{proof}

\subsection{``Categorification'' of the deletion-contraction relation}
We now present a purely algebraic proof of the loopy deletion--contraction relation~\eqref{eq:del-contr} for external bizonotopal algebras, modeled on the proof of the usual deletion--contraction relation for the Hilbert series of classical zonotopal algebras given in~\cite[Theorem~2.7]{SSV}.
Unlike the combinatorial argument of the previous section, this proof relies on the realization of the external bizonotopal algebra as a subalgebra of the edge algebra and on functorial maps between these algebras.
In particular, this approach applies only to the external case.

While the proof in the previous section makes the combinatorial content of~\eqref{eq:del-contr} explicit, the argument below is based on the functorial properties of bizonotopal algebras and on a short exact sequence relating the algebras of the graphs $G$, $G-e$, and $G/e$.

\medskip

For a graph $G=(V,E)$ and a non-loop edge $e\in E$ with endpoints $p,q\in V$,
let $\varepsilon', \varepsilon''\in \Eh$ be the arrows corresponding to the orientations of $e$ from $p$ to $q$ and from $q$ to $p$ respectively, let $w$ be the vertex in $G/e$ obtained by identifying vertices $p$ and $q$, and let $\bar{e}$ be the loop  based at $w$ in  $G/e$ and obtained from $e$.

\medskip

There are several natural maps connecting algebras related to the graphs $G$, $G-e$ and $G/e$.
\begin{itemize}
\item
  A surjective homomorphism
$
\widehat{\rho}_e:  \cEh_G\twoheadrightarrow \cEh_{G-e},$
  sending
  $ x_{\varepsilon'}$ and $x_{\varepsilon''}$ to $0$,
  maps $y_v\in \cEh_G$ to $y_v\in \cEh_{G-e}$, for each $v\in V$, and thus
  induces an epimorphism
  \begin{equation}
\label{eq:rho}
\rho_e:\BZ^e_G\twoheadrightarrow \BZ^e_{G-e}.
  \end{equation}

\item The homomorphism
  $ \widehat{\gamma}_e:\cEh_{G/e}\to \cEh_G$ given by \
  $\widehat{\gamma}_e(x_{\bar{e}})= x_{\varepsilon'}+x_{\varepsilon''}$, and $
  \widehat{\gamma}_e(x_\epsilon)=x_\epsilon$,  if  $\epsilon\ne \bar{e}$
  induces an algebra embedding
\begin{equation}
  \label{eq:gamma}
  \gamma_e: \BZ^e_{G/e}\hookrightarrow \BZ^e_{G}: \ \gamma_e(y_w)= y_p+y_q, \text{ and } \gamma_e(y_v)=y_v, \text{ if } v\ne w.
\end{equation}

\item
  The composition of the ``partial derivatives''
  $\ds \frac{\partial}{\partial x_{\varepsilon'}}$
  and   $\ds \frac{\partial}{\partial x_{\varepsilon''}}$
  of $\cEh_G$ with the projection
  $\widehat{\rho}_e:\cEh_G\twoheadrightarrow \cEh_{G-e}\simeq \cEh_G/(x_{\varepsilon'},x_{\varepsilon''})$, gives well-defined derivations
  $ \partial_{x_{\varepsilon'}}, \partial_{x_{\varepsilon''}}: \cEh_G\to \cEh_{G-e}$. Then the map
  $$
  \widehat{\delta}_e = \partial_{x_{\varepsilon'}}- \partial_{x_{\varepsilon''}}:
  \cEh_G\to \cEh_{G-e}
  $$
  is also a derivation of degree $-1$. Moreover, since
  $$\widehat{\delta}_e(y_v)=
  \begin{cases}
    y_v & \text{ if }  v=p\\
    -y_v & \text{ if }  v=q\\
    0 & \text{ if } v\ne p,q,
  \end{cases}
  $$
  we see that $\widehat{\delta}_e$ sends the subalgebra $\BZ_G^e\subset \cEh_G$ onto $\BZ_{G-e}^e\subset \cEh_{G-e}$.
\end{itemize}
It is straightforward to verify that the kernel of the map $\widehat{\delta}_e$
 coincides with the image of the embedding
 $\widehat{\gamma}_e: \cEh_{G/e}\to \cEh_G$ and, if
 $\delta_e: \BZ_G^e\to \BZ_{G-e}^e$ is the restriction
 of $\widehat{\delta}_e$ to $\BZ_G^e$, then
 $\Ker \delta_e=\gamma_e(\BZ_{G/e}^e)$.

\medskip
Thus we established the following proposition.

\begin{proposition}
  \label{thm:delta}
  There exists a surjective, degree $-1$, derivation of graded algebras
  $$\delta_e:\BZ_G^e\twoheadrightarrow \BZ_{G-e}^e$$
whose kernel coincides with the image
$\gamma_e(\BZ_{G/e}^e)\subset \BZ_G^e$ of the embedding $\gamma_e$~\eqref{eq:gamma}.
\end{proposition}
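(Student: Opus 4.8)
The proposition bundles three claims about $\delta_e=\widehat{\delta}_e|_{\BZ_G^e}$: that it is a well-defined derivation of degree $-1$ landing in $\BZ_{G-e}^e$, that it is surjective, and that $\Ker\delta_e=\gamma_e(\BZ_{G/e}^e)$. My plan is to do all the bookkeeping in the ambient algebra $\cEh_G$, where the two arrows $\varepsilon',\varepsilon''$ of $e$ split off cleanly, and then transport the conclusions to the subalgebras. Since $x_{\varepsilon'}^2=x_{\varepsilon''}^2=x_{\varepsilon'}x_{\varepsilon''}=0$, Proposition~\ref{prop:partial} gives a decomposition of $\cEh_{G-e}$-modules $\cEh_G=\cEh_{G-e}\oplus\cEh_{G-e}\,x_{\varepsilon'}\oplus\cEh_{G-e}\,x_{\varepsilon''}$, where $\cEh_{G-e}\subset\cEh_G$ is the subalgebra generated by the remaining arrow variables. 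Writing a general element as $\alpha+\beta x_{\varepsilon'}+\gamma x_{\varepsilon''}$ with $\alpha,\beta,\gamma\in\cEh_{G-e}$, one computes $\widehat{\delta}_e(\alpha+\beta x_{\varepsilon'}+\gamma x_{\varepsilon''})=\beta-\gamma$. From this closed form the degree $-1$ property and the Leibniz rule $\widehat{\delta}_e(uv)=\widehat{\delta}_e(u)\,\widehat{\rho}_e(v)+\widehat{\rho}_e(u)\,\widehat{\delta}_e(v)$ follow in one line (using $\widehat{\rho}_e(\alpha+\beta x_{\varepsilon'}+\gamma x_{\varepsilon''})=\alpha$), and $\Ker\widehat{\delta}_e=\{\beta=\gamma\}=\cEh_{G-e}\oplus\cEh_{G-e}(x_{\varepsilon'}+x_{\varepsilon''})$. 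As $(x_{\varepsilon'}+x_{\varepsilon''})^2=0$, this free rank-two $\cEh_{G-e}$-module is exactly the image of the injective map $\widehat{\gamma}_e$, so $\Ker\widehat{\delta}_e=\operatorname{Im}\widehat{\gamma}_e$.

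Next I would restrict to the subalgebra. Since $\widehat{\delta}_e(y_v)$ equals $1$ for $v=p$, $-1$ for $v=q$, and $0$ otherwise, while $\widehat{\rho}_e(y_v)=\bar{y}_v$ is the corresponding generator of $\BZ_{G-e}^e$, the Leibniz rule shows that $\delta_e$ maps $\BZ_G^e$ into $\BZ_{G-e}^e$ and that on the monomial basis of Proposition~\ref{lem:score} it acts by $\delta_e(y^{\ba})=a_p\,\bar{y}^{\,\ba-\be_p}-a_q\,\bar{y}^{\,\ba-\be_q}$, where $\bar{y}^{\,\bd}:=\prod_v\bar{y}_v^{d_v}$ is the basis monomial of $\BZ_{G-e}^e$ when $\bd$ is a partial score vector of $G-e$ and is $0$ otherwise. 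The inclusion $\gamma_e(\BZ_{G/e}^e)\subseteq\Ker\delta_e$ is then immediate from $\Ker\widehat{\delta}_e=\operatorname{Im}\widehat{\gamma}_e$ together with $\gamma_e=\widehat{\gamma}_e|_{\BZ_{G/e}^e}$.

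The first genuinely non-formal step is surjectivity of $\delta_e$, which I would prove by induction on the $q$-coordinate of a partial score vector $\bc$ of $G-e$, showing $\bar{y}^{\,\bc}\in\operatorname{Im}\delta_e$. By Proposition~\ref{prop:scores} the shifted vector $\bc+\be_p$ is always a partial score vector of $G$: for $S\ni p$ both sides of $\sum_{v\in S}a_v\le\kap_S$ increase by one relative to $G-e$, while for $S\not\ni p$ the left side is unchanged and $\kap_S$ does not decrease. Hence $y^{\bc+\be_p}$ is a basis element and $\delta_e(y^{\bc+\be_p})=(c_p+1)\,\bar{y}^{\,\bc}-c_q\,\bar{y}^{\,\bc+\be_p-\be_q}$. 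The trailing term either vanishes (when $c_q=0$, or when $\bc+\be_p-\be_q$ is not a partial score vector of $G-e$) or is a basis monomial of strictly smaller $q$-coordinate, already in the image by the inductive hypothesis; either way $\bar{y}^{\,\bc}\in\operatorname{Im}\delta_e$. Thus $\delta_e$ is onto, and in fact surjective in each graded degree.

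It remains to upgrade $\gamma_e(\BZ_{G/e}^e)\subseteq\Ker\delta_e$ to an equality, which I regard as the main obstacle. Having surjectivity in hand, I would finish by a graded rank--nullity count: because $\delta_e$ has degree $-1$, $\dim(\Ker\delta_e)^{(n)}=\dim(\BZ_G^e)^{(n)}-\dim(\BZ_{G-e}^e)^{(n-1)}$, and by the loopy deletion--contraction relation~\eqref{eq:del-contr} of Theorem~\ref{thm:delcont} in the case $r=1$ the right-hand side equals $\dim(\BZ_{G/e}^e)^{(n)}$, which is the graded dimension of $\gamma_e(\BZ_{G/e}^e)$ since $\gamma_e$ is an injective graded map; combined with the inclusion this forces equality degreewise. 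The one caveat is that invoking Theorem~\ref{thm:delcont} here is slightly against the spirit of ``categorification'', since the exact sequence is meant to explain~\eqref{eq:del-contr} rather than rely on it. A self-contained alternative is to prove the reverse inclusion directly, i.e.\ that every $u\in\BZ_G^e$ with $\widehat{\delta}_e(u)=0$ satisfies $\widehat{\gamma}_e^{-1}(u)\in\BZ_{G/e}^e$; equivalently, to exhibit a bijection between the partial score vectors of $G$ of weight $n$ and the disjoint union of those of $G/e$ of weight $n$ with those of $G-e$ of weight $n-1$. I expect this bijection to be the delicate part, and in either approach the subtle case is that of subsets $S\subset V$ containing exactly one of $p,q$, precisely where the constraints $\sum_{v\in S}a_v\le\kap_S$ defining partial score vectors differ among $G$, $G-e$, and $G/e$.
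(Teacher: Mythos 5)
Your proposal is correct, but it reaches the kernel identity by a genuinely different route than the paper. The paper's own argument stays inside the ambient algebras: it asserts that $\Ker \widehat{\delta}_e = \mathrm{Im}\,\widehat{\gamma}_e$ in $\cEh_G$ and that this equality restricts to the subalgebras, so the exact sequence~\eqref{eq:SES} is obtained with no dimension count and the relation~\eqref{eq:del-contr} for $r=1$ then falls out as Corollary~\ref{cor:del-cont}. You instead prove surjectivity of $\delta_e$ by an explicit induction on the $q$-coordinate of a partial score vector (a step the paper only asserts; your argument, resting on the observation that adding $1$ to the $p$-coordinate of a partial score vector of $G-e$ yields a partial score vector of $G$, is clean and correct), and then close the kernel computation by graded rank--nullity using Theorem~\ref{thm:delcont}. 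This is logically sound --- that theorem has an independent combinatorial proof earlier in the paper, so there is no circularity --- but, as you note, it makes Corollary~\ref{cor:del-cont} redundant rather than a consequence of the exact sequence; the step you single out as the main obstacle (that $u\in\BZ_G^e$ with $\widehat{\delta}_e(u)=0$ forces $\widehat{\gamma}_e^{-1}(u)\in\BZ_{G/e}^e$) is precisely what the paper's ``it is easy to check'' passes over, and your dimension count is an honest way to buy that step at the price of importing~\eqref{eq:del-contr}. One small caution: your identification of $\Ker\widehat{\delta}_e=\cEh_{G-e}\oplus\cEh_{G-e}(x_{\varepsilon'}+x_{\varepsilon''})$ with $\mathrm{Im}\,\widehat{\gamma}_e$ is exact only when $e$ has no parallel edges --- any other edge $f$ joining $p$ and $q$ becomes a loop of $G/e$ carrying a single arrow variable, so $\mathrm{Im}\,\widehat{\gamma}_e$ contains $x_{f'}+x_{f''}$ but not $x_{f'}$, while $\Ker\widehat{\delta}_e$ contains both. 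This does not damage your proof, since downstream you only use the inclusion $\mathrm{Im}\,\widehat{\gamma}_e\subseteq\Ker\widehat{\delta}_e$ (which holds because the $\widehat{\rho}_e$-derivation $\widehat{\delta}_e$ kills the images of all generators of $\cEh_{G/e}$), injectivity of $\gamma_e$, and the dimension count; but the same parallel-edge subtlety is exactly what makes the paper's direct approach more delicate than its wording suggests.
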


As an immediate consequence of this result we obtain the loopy deletion-contraction relation for external bizonotopal algebras.

\begin{corollary}
 \label{cor:del-cont}
 The Hilbert series of the graded algebras  $\BZ^e_G$, $\BZ^e_{G/e}$, and $\BZ^e_{G-e}$
 satisfy the relation
 \begin{equation}
   \label{eq:hilb}
   h_G(t)=h_{G/e}(t) + t\cdot h_{G-e}(t).
 \end{equation}
\end{corollary}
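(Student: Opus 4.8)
The plan is to deduce the Hilbert series relation~\eqref{eq:hilb} directly from the structural result of Proposition~\ref{thm:delta}, which provides a surjective degree $-1$ derivation $\delta_e:\BZ_G^e\twoheadrightarrow \BZ_{G-e}^e$ whose kernel is $\gamma_e(\BZ_{G/e}^e)$. The key observation is that the embedding $\gamma_e$ is a graded algebra map of degree $0$, so $\gamma_e(\BZ_{G/e}^e)$ is isomorphic to $\BZ_{G/e}^e$ as a graded vector space; hence $\Ker\delta_e \cong \BZ_{G/e}^e$ with the grading preserved. Meanwhile, since $\delta_e$ lowers degree by $1$ and is surjective, its image in degree $n-1$ is all of $(\BZ_{G-e}^e)^{(n-1)}$.

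First I would assemble these maps into a short exact sequence of graded vector spaces in each fixed degree $n$. For every $n\ge 0$, the derivation $\delta_e$ restricts to a surjection
\begin{equation*}
  0\to (\Ker\delta_e)^{(n)}\to (\BZ_G^e)^{(n)}\xrightarrow{\delta_e}(\BZ_{G-e}^e)^{(n-1)}\to 0,
\end{equation*}
where the shift by $1$ on the right reflects that $\delta_e$ has degree $-1$. Taking dimensions gives
\begin{equation*}
  \dim(\BZ_G^e)^{(n)}=\dim(\Ker\delta_e)^{(n)}+\dim(\BZ_{G-e}^e)^{(n-1)}.
\end{equation*}
Using $\Ker\delta_e\cong\BZ_{G/e}^e$ as graded spaces, this reads $\dim(\BZ_G^e)^{(n)}=\dim(\BZ_{G/e}^e)^{(n)}+\dim(\BZ_{G-e}^e)^{(n-1)}$.

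Next I would package these degreewise identities into generating functions. Multiplying by $t^n$ and summing over $n\ge 0$, the middle term contributes $h_G(t)$, the kernel term contributes $h_{G/e}(t)$, and the image term contributes $\sum_n \dim(\BZ_{G-e}^e)^{(n-1)}\,t^n = t\cdot h_{G-e}(t)$, exactly because of the degree shift. This yields precisely the claimed relation~\eqref{eq:hilb}.

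The only genuinely substantive point—already handled by Proposition~\ref{thm:delta}—is the exactness, i.e.\ that $\Ker\delta_e$ equals the image of $\gamma_e$; everything here is a formal bookkeeping consequence of that. The one subtlety I would be careful about is the degree shift: because $\delta_e$ has degree $-1$, surjectivity means $(\BZ_{G-e}^e)^{(n-1)}$ (not degree $n$) appears as the quotient in degree $n$, which is precisely what produces the factor of $t$ rather than leaving $h_{G-e}$ unshifted. I would also note that exactness on the left (injectivity into the kernel) is automatic, so no additional argument is needed there.
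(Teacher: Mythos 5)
Your proposal is correct and follows essentially the same route as the paper: the paper also deduces the relation from Proposition~\ref{thm:delta} by forming the short exact sequence $0\to \BZ^e_{G/e}\to \BZ^e_G\to \BZ^e_{G-e}[-1]\to 0$ and invoking additivity of Hilbert series together with the fact that the degree shift contributes the factor $t$. Your degreewise bookkeeping is just an unpacking of that same argument.
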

\begin{proof}
From the above proposition it follows that there exists a short exact sequence of graded vectors spaces
  \begin{equation}
    \label{eq:SES}
    0\to \BZ^e_{G/e} \to \BZ^e_G \to \BZ^e_{G-e}[-1]\to 0,
  \end{equation}
  where $[-1]$ is a degree shift functor indicating
  that the map $\BZ^e_G\to \BZ^e_{G-e}$ decreases degree by $-1$.

Now the relation~\eqref{eq:hilb} follows from
the fact that Hilbert series are additive with respect to exact sequences and that the shift of grading by $-1$ multiplies the Hilbert series by $t$.
\end{proof}

\section{Additional properties of central and internal  bizonotopal algebras}\label{sec:add}
Here we present some additional properties of the central~\eqref{eq:central} and internal~\eqref{eq:internal} bizonotopal algebras of a graph $G$.
In particular, we show that, like the external zonotopal algebra $\BZ_G^e$, they can be viewed as subalgebras of certain quotients of the partial orientation algebra $\cEh_G$~\eqref{eq:partial}.

\subsection{Central bizonotopal algebras}\label{subsec:ce}
Let $G=(V,E)$ be a graph.
We begin with a description of the central bizonotopal algebra $\BZ_G^c=\BZ_G^{(0)}$ as a subalgebra,
analogously to the description of $\BZ_G^e$ in Definition~\ref{def:external}.
First, we define an algebra $\cEh_G^c$, an analog of the partial orientation algebra $\cEh_G$~\eqref{eq:partial}, in the central case.

\medskip

 For a subset of vertices $S\subset V$, let $E_S\subset E$ denote the set of edges incident to vertices in $S$.
 Consider the set
 \[
   \cP^c_S
   :=
   \Bigl\{
   \Sigma\subset    \pi^{-1}(E_S)
   \,\col\,
   \pi|_{\Sigma}:\Sigma\to E_S \text{ is a bijection and } s(\Sigma)\subset S
   \Bigr\}
 \]
of partial orientations whose set of edges is exactly $E_S$ and such that every edge between $S$ and $V\setminus S$ is oriented \emph{out of} $S$.

Let
\begin{equation}
  \label{eq:centmonom}
  \cM_S=\{x_\Sigma \col \Sigma\in \cP^c_S \}\subset \cEh_G
\end{equation}
be the set of monomials in $\K[\Eh]$
corresponding via~\eqref{eq:monom} to partial orientations
in $\cP^c_S$.
The degree of each monomial in $\cM_S$ is equal to $\kap_S=|E_S|$
and there are $|\cM_S|=2^\mu$ of them, where $\mu$ is  the number of non-loop edges with both ends in  $S$.

Consider the ideal
$$
I_G^c:=\Bigl(\bigcup_{S\subset V}\cM_S \Bigr)\subset \cEh_G
$$
generated by the monomials from all $\cM_S $ and
let $\cEh_G^c$ be the quotient algebra
\begin{equation}
  \label{eq:c-partial}
\cEh_G^c:=\cEh_G/I_G^c.
\end{equation}

\begin{theorem}\label{th:monomc}
 The central bizonotopal algebra $\BZ_G^c$ of a graph $G=(V,E)$ is
 isomorphic to the subalgebra of $\cEh_G^c$ generated by  the  degree one elements
 \begin{equation}
   \label{eq:gen-c}
y_v=\sum_{e\in s^{-1}(v)} x_e, \ v\in V,
 \end{equation}
 where $x_e$ is the image in $\cEh_G^c$ of the monomial $x_{\{e\}}\in \cEh_G$.
\end{theorem}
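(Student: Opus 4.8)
The plan is to transport the proof of Theorem~\ref{th:monom} to the central setting, replacing the partial orientation algebra $\cEh_G$ by its quotient $\cEh_G^c$. Consider the algebra homomorphism
$$f_G^c:\K[V]\to \cEh_G^c,\qquad z_v\mapsto y_v,$$
whose image is precisely the subalgebra generated by the elements~\eqref{eq:gen-c}. It suffices to prove that $\Ker f_G^c=\fI_G^{(0)}$, since then $f_G^c$ factors through an isomorphism $\BZ_G^c=\K[V]/\fI_G^{(0)}\xrightarrow{\sim}\mathrm{Im}\,f_G^c$.

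For the inclusion $\fI_G^{(0)}\subseteq\Ker f_G^c$ I would take a generator $\prod_{v\in S}z_v^{a_v}\in\fM_S^{(0)}$ (so $\emptyset\ne S\subseteq V$ and $\sum_v a_v=\kap_S$) and examine the expansion of $\prod_{v\in S}y_v^{a_v}$ as an element of $\cEh_G$. Since each factor $y_v$ with $v\in S$ contributes an arrow starting at $v$, every nonzero monomial $x_\Sigma$ in this expansion comes from a partial orientation $\Sigma$ all of whose arrows start in $S$, with $|\Sigma|=\kap_S=|E_S|$. Then $\pi(\Sigma)\subseteq E_S$, and injectivity of $\pi$ on $\Sigma$ forces $\pi(\Sigma)=E_S$; moreover every boundary edge of $S$ is oriented out of $S$ (its arrow starts in $S$). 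Hence $x_\Sigma\in\mathcal{M}_S\subseteq I_G^c$, so $\prod_{v\in S}y_v^{a_v}=0$ in $\cEh_G^c$.

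For the opposite inclusion it is enough, exactly as in Theorem~\ref{th:monom}, to show $f_G^c$ is injective on the monomials complementary to $\fI_G^{(0)}$. As $\fI_G^{(0)}$ is a monomial ideal, these are the $z^\ba=\prod_v z_v^{a_v}$ with $\sum_{v\in S}a_v<\kap_S$ for every nonempty $S$; call such $\ba$ \emph{strict}. The crux is a description of the monomial basis of $\cEh_G^c$: because $I_G^c$ is a monomial ideal, $x_\Sigma\in I_G^c$ if and only if $\Sigma'\subseteq\Sigma$ for some $\Sigma'\in\mathcal{M}_S$, which means that for some nonempty $S$ all edges of $E_S$ are oriented in $\Sigma$ with the boundary edges pointing out of $S$. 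Counting the arrows of $\Sigma$ starting in such an $S$ gives exactly one per edge of $E_S$, so $\sum_{v\in S}a_v=\kap_S$, where $\ba$ is the score vector of $\Sigma$. Consequently, if $\ba$ is strict then \emph{no} partial orientation $\Sigma$ with score vector $\ba$ has $x_\Sigma\in I_G^c$. Recalling from the proof of Proposition~\ref{lem:score} that, up to a nonzero scalar, $y^\ba=\sum_\Sigma x_\Sigma$ summed over partial orientations $\Sigma$ of score vector $\ba$, we conclude that for strict $\ba$ the element $f_G^c(z^\ba)=y^\ba$ is a nonzero combination of surviving basis monomials of $\cEh_G^c$. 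Since distinct score vectors index disjoint families of partial orientations, the supports of the $y^\ba$ are pairwise disjoint, so these elements are linearly independent; this yields injectivity and completes the proof.

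The hard part will be isolating the correct combinatorial characterization of $I_G^c$ used in the second inclusion: namely that $x_\Sigma$ is killed in $\cEh_G^c$ precisely when $\Sigma$ saturates some subset $S$ with all boundary edges oriented outward, and then recognizing that this saturation condition forces $\sum_{v\in S}a_v=\kap_S$. Once this equivalence is established — keeping careful track of loops and of the distinction between boundary and internal edges of $S$ — strictness of the complementary exponent vectors immediately guarantees survival of every relevant monomial, and the disjoint-support argument of Proposition~\ref{lem:score} carries over verbatim.
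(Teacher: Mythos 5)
Your proposal is correct and follows essentially the same route as the paper's own (much terser) proof: both reduce the statement to checking that a monomial $y^{\ba}$ vanishes in $\cEh_G^c$ exactly when $\sum_{v\in S}a_v\ge \kap_S$ for some nonempty $S$, using that every surviving term $x_\Sigma$ of $\prod_{v\in S}y_v^{a_v}$ with $|\Sigma|=\kap_S$ lies in $\mathcal{M}_S$, and conversely that divisibility of $x_\Sigma$ by an element of $\mathcal{M}_S$ forces $\sum_{v\in S}a_v\ge\kap_S$. The only point you leave implicit is that a strict exponent vector is actually realized by at least one partial orientation (so that $y^{\ba}\ne 0$), which is immediate from Proposition~\ref{prop:scores}.
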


\begin{proof}
 Arguing as in the proof of part (i) of Lemma~\ref{lem:score}, we see that the subalgebra of $\cEh_G^c$ generated by $y_v, v\in V$ is monomial. We need to check that  $\ds \prod_{v\in V} y_v^{a_v}=0$ in $\cEh_G^c$ if and only if  $\ds \prod_{v\in V} z_v^{a_v}=0$ in $\BZ_G^c$.

\medskip
Clearly for any $S\subset V$
and for all
$\ba=(a_v)_{v\in S}\in \Zp^{S}$, with $\ds \sum_{v\in S}a_v=\kap_S$,
we have
$$
\prod_{v\in S} y_v^{a_v}=0\ \text{in}\ \cEh_G^c
$$
which is exactly the set of defining relations for the algebra $\BZ_G^c$.

\medskip
It remains to prove the converse. If $\ds \prod_{v\in V} y_v^{a_v}=0$ in $\cEh_G$, then by Theorem~\ref{th:monom}, we know that the corresponding relation holds for $z_v, \ v\in V$.
Further, assume  that $\ds \prod_{v\in V} y_v^{a_v}=0$ in $\cEh_G^c$, but not in $\cEh_G$. Then the expansion of $\ds \prod_{v\in V} y_v^{a_v}$  in variables   $x_e, e\in \Eh$,
contains a  monomial $m$ that vanishes in $\cEh_G^c$, but not in $\cEh_G$. Therefore $m$ is divisible by a monomial $m'\in \cM_S$ for some $S$. This implies that $\ds \sum_{i\in S} a_i\ge \kap_S$ and, hence, $\ds \prod_{v\in V} z_v^{a_v}=0$ in $\BZ_G^c$.
\end{proof}

\medskip

Recall~\cite{PS,HoRo}  that the dimension of the usual central zonotopal algebra of a connected graph $G$ is equal to the number
of spanning trees of $G$.
The following result gives a similar property of the central bizonotopal algebra.

\begin{theorem}\label{th:Hilbc}
  For a connected graph $G$, the Hilbert series
  \[
    h_G^c(t):=\sum_{k\ge 0}\dim (\BZ_G^c)^{(k)}\, t^k
  \]
  is a polynomial of degree $|E|-1$. Its top coefficient
  $\dim (\BZ_G^c)^{(|E|-1)}$ is equal to the number of spanning trees of $G$.
\end{theorem}

\begin{proof}
  The first statement is immediate from the definition of $\BZ_G^c$.
  For the second, note that $\dim (\BZ_G^c)^{(|E|-1)}$ satisfies the loopy deletion--contraction relation and multiplicativity by Theorem~\ref{delcont:ext-cen}, and agrees with the spanning tree count for one-vertex loop graphs.
  Since the number of spanning trees satisfies the same relations and normalization, the result follows.
\end{proof}

\subsection{Internal bizonotopal algebras}\label{sec:in}

The internal bizonotopal algebra~\eqref{eq:internal}
$ \BZ_G^i:=\BZ_G^{(-1)}$
is defined for all graphs $G=(V,E)$ without isolated vertices.
As in the central case considered in the previous subsection,  $\BZ_G^i$ can be realized as a subalgebra of a certain quotient algebra $\cEh^i_G$ of the partial orientation algebra $\cEh_G$.

Namely, for a subset $S\subset V$, let $ \cM_S^-$ be the set of monomials  $m\in \cEh_G$ such that $x_em\in \cM_S$ for some $e\in \Eh$, where $\cM_S\subset \cEh_G$  is the set of monomials~\eqref{eq:centmonom} introduced in the previous subsection.
Clearly the degree of each  monomial in $\cM_S^-$ is equal to $\kap_S-1$.

Define
$$\cEh_G^i:=\cEh_G/I_G^i, $$
where
$$
I_G^i:=\Bigl( \bigcup_{S\subset V} \cM^-_S \Bigr).$$

\begin{theorem}\label{th:monomi}
 The internal bizonotopal algebra $\BZ_G^i$ of a graph $G=(V,E)$ without isolated vertices is isomorphic to the subalgebra of $\cEh_G^i$ generated by  the linear forms~\eqref{eq:gen-c}
  $$y_v=\sum_{e\in s^{-1}(v)} x_e,$$
  where $x_e$ is the image in $\cEh_G^i$ of the monomial $x_{\{e\}}\in \cEh_G$.
\end{theorem}

\begin{proof} The proof is analogous to that of Theorem~\ref{th:monomc}, with $\cM_S$ replaced by $\cM_S^-$ and all the degrees shifted by $-1$.
\end{proof}

\medskip

The following result (see also Remark~\ref{rem:top-internal} below) provides a nice combinatorial interpretation of the dimension of the
top component of the algebra $\BZ_G^i$ for the complete graph $G=K_n$
similar to those for the external and central bizonotopal algebras
in Corollary~\ref{cor:hilb-e}(4) and Theorem~\ref{th:Hilbc}.

\begin{theorem}\label{th:complete}
For the complete graph $K_n$ with $n\ge 4$, the top degree of $\BZ_{K_n}^i$ is equal to $\binom{n}{2}-2$, and the dimension of the corresponding graded component is $\binom{n-2}{2}n^{n-4}$.
\end{theorem}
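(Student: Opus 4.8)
The plan is to identify the graded pieces of $\BZ_{K_n}^i$ with lattice points of the score vector polytope and then to count the points in the top piece.

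\emph{Maximal degree.} By the monomial description of $r$‑bizonotopal algebras (the internal case $r=-1$ of the defining‑relations theorem proved for $\BZ_G^e$), a monomial $\prod_v z_v^{a_v}$ is nonzero in $\BZ_{K_n}^i=\K[V]/\fI_{K_n}^{(-1)}$ exactly when it is divisible by none of the generators of $\fI_{K_n}^{(-1)}$, i.e.\ when $\sum_{v\in S}a_v\le\kap_S-2$ for every nonempty $S\subseteq V$. For $K_n$ one has $\kap_S=\binom{|S|}{2}+|S|(n-|S|)$, so taking $S=V$ gives $\kap_V=\binom n2$ and hence $|\ba|\le\binom n2-2$ for every basis monomial. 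To see this value is attained for $n\ge4$ I would exhibit a single feasible weight‑$(\binom n2-2)$ vector: the singleton constraints force $a_v\le\kap_v-2=n-3$, and one checks directly that a near‑balanced vector with coordinates in $\{\lfloor(\binom n2-2)/n\rfloor,\lceil(\binom n2-2)/n\rceil\}$ (for instance $(1,1,1,1)$ when $n=4$) satisfies every inequality precisely from $n\ge4$ on. Thus the top component lives in degree $\binom n2-2$, and its dimension equals the number of $\ba\in\Zp^V$ with $|\ba|=\binom n2-2$ obeying all the inequalities $\sum_{v\in S}a_v\le\kap_S-2$.

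\emph{Reduction to Landau‑type inequalities.} I would next rewrite these inequalities by the complementation already used for $\Pol_G$: for $T=V\setminus S$ one has $\binom n2-\kap_S=\binom{|T|}{2}$, so the conditions become $\ba\ge0$, $|\ba|=\binom n2-2$, and $\sum_{v\in T}a_v\ge\binom{|T|}{2}$ for every proper nonempty $T$. Sorting $a_{(1)}\le\cdots\le a_{(n)}$, this is equivalent to the prefix inequalities $\sum_{i\le k}a_{(i)}\ge\binom k2$ for $k\le n-1$ together with $\sum_i a_{(i)}=\binom n2-2$; these are Landau's tournament‑score inequalities on proper subsets, with total weight two below the score level $\binom n2$. (The analogous count at weight $\binom n2$ returns the score vectors, i.e.\ the spanning forests, and at weight $\binom n2-1$ it returns the spanning trees $n^{n-2}$ — the central case controlled by loopy deletion‑contraction.) Passing to the complementary coordinates $c_v=(n-3)-a_v\ge0$ converts the problem into counting integer vectors with $\sum_v c_v=\binom{n-2}{2}-1$ whose largest $k$ coordinates sum to at most $g(k):=k(n-3)-\binom k2$ for all $k$; equivalently, partitions fitting weakly under the staircase $\delta=(n-3,n-4,\dots,1,0)$ of size $|\delta|=\binom{n-2}{2}$, taken one box smaller, each counted with its number $\binom{n}{m_0,m_1,\dots}$ of distinct arrangements as a length‑$n$ vector.

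\emph{Evaluation and the main obstacle.} Finally I would evaluate this weighted partition count to $\binom{n-2}{2}n^{n-4}$. Two routes seem natural: a Lindström--Gessel--Viennot / transfer‑matrix evaluation, in which the staircase prefix bounds make the sorted solutions lattice paths staying below a fixed boundary and the multinomial weights are produced by a determinant of binomial coefficients; or, mirroring the identification of the weight‑$(\binom n2-1)$ count with spanning trees, a Pollak‑type cyclic (cycle‑lemma) argument or a direct bijection with the appropriate forest family. The main obstacle is exactly this last evaluation. Unlike the external and central cases, no deletion‑contraction or matroid invariance is available at $r=-1$, and the non‑singleton staircase constraints become genuinely active for $n\ge7$ (they are vacuous for $n\le6$, which is why the naive stars‑and‑bars count is already correct there but fails in general), so the closed form must come from a global identity rather than from discarding inequalities. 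Establishing that the weighted sum collapses to $\binom{n-2}{2}n^{n-4}$ — together with the nonemptiness in degree $\binom n2-2$ noted above — is the crux of the argument.
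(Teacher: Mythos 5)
Your reduction of the problem is sound: the vanishing of all monomials of total degree $\binom{n}{2}-1=\kap_V-1$ correctly pins the top degree at $\binom{n}{2}-2$, and the identification of the top graded component with the lattice points $\ba\in\Zp^V$ of weight $\binom{n}{2}-2$ satisfying $\sum_{v\in S}a_v\le\kap_S-2$ (equivalently, by complementation, $\sum_{v\in T}a_v\ge\binom{|T|}{2}$ for proper $T$) matches the paper's setup exactly. But the proof stops precisely where the theorem lives: you yourself label the evaluation of this count as ``the main obstacle'' and ``the crux,'' and neither of the two routes you gesture at (an LGV/transfer-matrix determinant, or a Pollak-type cyclic argument) is carried out or even shown to be applicable. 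A closed form $\binom{n-2}{2}n^{n-4}$ asserted but not derived is a genuine gap, not a deferrable detail.

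The idea you are missing is a comparison between consecutive weight levels rather than a direct evaluation. The paper sets $X$ equal to your weight-$(\binom{n}{2}-2)$ set and $Y$ equal to the analogous weight-$(\binom{n}{2}-1)$ set with bounds $\kap_S-1$; the latter is the top component of the central algebra $\BZ^c_{K_n}$ and hence has cardinality $n^{n-2}$ (spanning trees), a fact you already observed. The map $(b_1,\dots,b_n)\mapsto(b_1,\dots,b_{n-1},b_n+1)$ injects $X$ into $Y$, and its complement is the set $Z$ of vectors in $Y$ for which some $J\subset[n-1]$ attains equality $\sum_{j\in J}b_j=\kap_J-1$. Submodularity of $\kap$ gives a unique maximal such $J$, yielding a partition $Z=\bigsqcup_J Z_J$, and an explicit coordinate change identifies $Z_J$ with $Y_{|J|}\times Y_{n-|J|}$, so $|Z_J|=|J|^{|J|-2}(n-|J|)^{n-|J|-2}$. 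Summing, $|Z|$ is the number of two-component spanning forests of $K_n$, which equals $n^{n-4}(n-1)(n+6)/2$ by R\'enyi's formula, and $|X|=n^{n-2}-|Z|=\binom{n-2}{2}n^{n-4}$. Without this (or some other completed) evaluation, your argument establishes only the first half of the statement.
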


\begin{proof}
We linearly order the vertices of $K_n$, i.e.\ identify $V$ with $[n]=\{1,2,\ldots,n\}$.

Since $\ds |E|=\binom{n}{2}$,  all  monomials of degree
$\ds \binom{n}{2}-1=\kap_V-1$ vanish in $\BZ_{K_n}^i$.
Hence the statement reduces to showing that the graded component of degree
$\ds \binom{n}{2}-2$ of the algebra $\BZ_{K_n}^i$ has dimension
$\ds \binom{n-2}{2}n^{n-4}$.

We will derive this from Lemma~\ref{lm:count} below after introducing some auxiliary  combinatorial objects.
\end{proof}

	\medskip
Let us introduce two sets $X$ and $Y$ of integer vectors that parametrize top degree monomials in the internal and central cases, respectively.

The set $X$ consists of all vectors $(b_1,b_2,\ldots,b_n)\in \Zp^n$ satisfying
\begin{itemize}
\item $\ds \sum_{i\in [n]}b_i=\binom{n}{2}-2$;
\item $\ds \sum_{i\in I} b_i\leq \kap_I-2=\binom{|I|}{2}+|I|(n-|I|)-2$, for all $\emptyset\ne I\subset [n]$.
\end{itemize}

The set $Y$ is defined similarly, with the constraints increased by one, that is
$Y$ consists of all  $(b_1,b_2,\ldots,b_n)\in \Zp^n$
such that
\begin{itemize}
\item $\ds \sum_{i\in [n]}b_i=\binom{n}{2}-1$;
\item $\ds \sum_{i\in I} b_i\leq \kap_I-1=\binom{|I|}{2}+|I|(n-|I|)-1$, for all $\emptyset\ne I\subset [n]$.
 \end{itemize}

 By the definition of the internal bizonotopal algebra,
 the dimension of its component of degree $\ds \binom{n}{2}-2$ is equal to $|X|$ which we find below.

 Similarly, $|Y|$ is equal to the dimension of the highest degree component of the central bizonotopal algebra $\BZ^c_{K_n}$. Thus, by  Theorem~\ref{th:Hilbc},  $|Y|$ is equal to the number of spanning trees in $K_n$, that is $|Y|=n^{n-2}$.

 From the inequalities defining $X$ and $Y$ we see that if $(b_1,\ldots,b_{n-1},b_n)\in X$, then $(b_1,\ldots,b_{n-1},b_n+1)\in Y$.
Therefore we have
 \begin{equation*}
   \begin{split}
 |X| &=|Y|-|\{(b_1,b_2,\ldots,b_n)\in Y:\ b_n=0\}|\\
 & -|\{(b_1,b_2,\ldots,b_n)\in Y:\ b_n>0\ \text{and}\ \sum_{j\in J}b_j=\kap_J-1\ \text{for some}\ J\subset[n-1]\}|\\
 &=|Y|-|\{(b_1,b_2,\ldots,b_n)\in Y:\ \sum_{j\in J}b_j=\kap_J-1\ \text{for some}\ J\subset[n-1]\}|.
   \end{split}
 \end{equation*}

 Denote by $Z$ the subset of $Y$ consisting of vectors that saturate at least one inequality indexed by a subset of $[n-1]$:
 $$
 Z:=\{(b_1,b_2,\ldots,b_n)\in Y \col \sum_{j\in J}b_j=\kap_J-1\  \text{for some}\ J\subset[n-1]\}.
 $$
We claim that for every $(b_1,b_2,\ldots,b_n)\in Z$, there exists a unique maximal subset $J\subset [n-1]$  such that $\ds \sum_{j\in J} b_j=\kap_{J}-1$.
 Indeed assume that  there exist two different sets
 $J_1, J_2\subset[n-1]$
 satisfying
 $$ \sum_{j\in J_1} b_j=\kap_{J_1}-1 \text{ and } \sum_{j\in J_2} b_j=\kap_{J_2}-1$$
 such that $J_1\not\subset J_2$ and $J_2\not\subset J_1$.
If $J_1\cap J_2\ne \emptyset$, then we have
\begin{equation*}
 \begin{split}
   \sum_{j\in J_1\cup J_2} b_j & =
    \sum_{j\in J_1} b_j  +\sum_{j\in J_2} b_j-\sum_{j\in J_1\cap J_2} b_j\\
     & = \kap_{J_1}-1 + \kap_{J_2}-1 - \sum_{j\in J_1\cap J_2} b_j\\
     & \ge \kap_{J_1}-1 + \kap_{J_2}-1-(\kap_{J_1\cap J_2}-1)
                                \ge \kap_{J_1\cup J_2}-1,
 \end{split}
\end{equation*}
where the last inequality follows from the submodularity of the function $\kap$ (see Lemma~\ref{lem:submodular}).
If $J_1\cap J_2=\emptyset$, then
$$ \sum_{j\in J_1\cup J_2} b_j=\sum_{j\in J_1} b_j+\sum_{j\in J_2} b_j
= \kap_{J_1}-1 +\kap_{J_2}-1\ge \kap_{J_1\cup J_2}-1.
$$
Therefore, $\ds \sum_{j\in J_1\cup J_2} b_j \ge \kap_{J_1\cup J_2}-1$. Since, by definition of the set $Y$, we also have $\ds \sum_{j\in J_1\cup J_2} b_j \le \kap_{J_1\cup J_2}-1$, we see that the union $J_1\cup J_2$
also satisfies our property, i.e.\ neither $J_1$ nor $J_2$ are maximal  which contradicts our assumption.

\medskip

Therefore we have a partition
$\ds Z=\bigsqcup_{\emptyset\ne I\subset [n-1]} Z_I$
into the subsets
$$
Z_I:=\{(b_1,\ldots,b_n)\in Z: \sum_{i\in I}b_i=\kap_I-1
\text{ and } \sum_{j\in J}b_j<\kap_J-1
\text{ if } I\subsetneq J \subset[n-1]\}.$$
Thus
$$|X|=|Y|-|Z|=n^{n-2} - \sum_{\emptyset\ne I\subset [n-1]} |Z_I|,$$
and the problem reduces to computing the cardinalities of the subsets $Z_I$.

\begin{lemma}\label{lm:count}
For any subset $J\subset [n-1]$, we have $$|Z_J|=|J|^{|J|-2}\bigl(n-|J|\bigr)^{(n-|J|)-2},$$
which coincides with the product of the numbers of spanning trees of the complete graphs on $J$ and on its complement $[n]\setminus J$.
\end{lemma}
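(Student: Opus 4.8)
The plan is to show that the constraints defining $Z_J$ decouple across the cut $(J,J^c)$, where $J^c:=[n]\setminus J$, so that $Z_J$ becomes a Cartesian product of two independent families of integer vectors, one supported on $J$ and one on $J^c$, each counted by a spanning-tree number. Write $k:=|J|$ and note that $n\in J^c$ always, since $J\subseteq[n-1]$. First I would record the constraints cutting out $Z_J$: a vector $(b_1,\dots,b_n)$ lies in $Z_J$ if and only if (a) $\sum_{i\in[n]}b_i=\binom{n}{2}-1$; (b) $\sum_{i\in I}b_i\le\kap_I-1$ for every nonempty $I\subseteq[n]$; (c) $\sum_{i\in J}b_i=\kap_J-1$; and (d) $\sum_{i\in J'}b_i\le\kap_{J'}-2$ for every $J\subsetneq J'\subseteq[n-1]$. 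The equivalence of ``$J$ is the unique maximal tight subset of $[n-1]$'' with (c) together with (d) is exactly the union-closure property proved in the paragraph preceding the lemma. Condition (c) then forces the total mass on $J^c$ to be $\binom{n}{2}-\kap_J=\binom{n-k}{2}$.

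The heart of the argument is to show that (a)--(d) split into independent one-sided conditions (for now assume $2\le k\le n-2$; the boundaries are dealt with at the end). On the $J$-side, combining the tight equality (c) with (b) applied to $J\setminus\{i\}$ gives the lower bound $b_i\ge\kap_J-\kap_{J\setminus\{i\}}=n-k$, so that $c_i:=b_i-(n-k)$ is a nonnegative integer; moreover (b) restricted to $I\subseteq J$ becomes precisely $\sum_{i\in I}c_i\le\binom{|I|}{2}+|I|(k-|I|)-1$, i.e.\ the defining inequalities of the top-degree component of the central bizonotopal algebra $\BZ^c_{K_k}$ on the vertex set $J$. On the $J^c$-side, subtracting (c) from (b) applied to $J\cup K$ yields $\sum_{i\in K}b_i\le\kap_{J\cup K}-\kap_J=\binom{|K|}{2}+|K|((n-k)-|K|)$ for all $K\subseteq J^c$, where the right-hand side is the degree function of $K_{n-k}$ on $J^c$; and (d) sharpens this to the strict bound (with $-1$) exactly for those $K$ avoiding the vertex $n$. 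The step I expect to require the most care is checking that the ``mixed'' instances of (b), for sets $I\cup K$ meeting both sides of the cut, are automatically implied by the one-sided inequalities; this is precisely where the submodularity of $\kap$ (Lemma~\ref{lem:submodular}) enters, and it is what makes the decoupling clean.

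It then remains to identify each factor. On the $J$-side the shifted vectors $(c_i)_{i\in J}$ range over exactly the top-degree monomial basis of $\BZ^c_{K_k}$, whose size is the number of spanning trees of $K_k$, namely $k^{k-2}$ by Theorem~\ref{th:Hilbc} and Cayley's formula. On the $J^c$-side the vertex $n$ plays the role of a distinguished vertex: the shift $b_n\mapsto b_n-1$ is well defined and nonnegative because (d) applied to $J'=[n-1]$ forces $b_n\ge 1$, and it carries the $J^c$-vectors bijectively onto the top-degree basis of $\BZ^c_{K_{n-k}}$, of cardinality $(n-k)^{(n-k)-2}$. Conversely, any pair consisting of a top-degree vector for $K_k$ and one for $K_{n-k}$ reassembles, via the inverse shifts, into a vector satisfying (a)--(d), the mixed inequalities again being automatic by the same submodularity computation; hence the correspondence is a bijection and $|Z_J|=k^{k-2}(n-k)^{(n-k)-2}$.

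Finally I would dispose of the degenerate boundaries $k=1$ and $n-k=1$. In these cases the corresponding side collapses to a single forced coordinate value ($b_i=\kap_{\{i\}}-1$ when $J=\{i\}$, respectively $b_n=0$ when $J^c=\{n\}$), contributing the factor $1=1^{\,-1}$, consistent with the convention that $K_1$ has a single spanning tree. In summary, the whole proof reduces to the decoupling of the constraints across the tight cut and the nonnegativity of the two shifts; granting submodularity, everything else is bookkeeping.
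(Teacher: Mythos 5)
Your proposal is correct and follows essentially the same route as the paper: the shifts $b_i\mapsto b_i-(n-|J|)$ on $J$ and $b_n\mapsto b_n-1$ on $J^c$ are exactly the paper's maps $\phi_1,\phi_2$, and identifying the two factors with the top-degree components of $\BZ^c_{K_{|J|}}$ and $\BZ^c_{K_{n-|J|}}$ (the sets $Y_\ell$, $Y_{n-\ell}$) is precisely how the paper concludes via the spanning-tree count. The only cosmetic difference is that you dispatch the mixed inequalities by submodularity applied to $I$ and the tight set $J$, where the paper instead carries out the explicit binomial computation of its Case~3; both verifications go through.
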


\begin{proof}
Let us list the elements of the subsets $J\subset [n]$ and
$[n]\setminus J$ in order, i.e.\
$J=\{j_1,j_2,\ldots,j_{\ell}\}$ and $[n] \setminus  J =\{i_1,i_2,\ldots,i_{n-\ell}\}$ with   $j_1<j_2<\ldots <j_{\ell}$ and
  $i_1 < i_2 \ldots < i_{n-\ell}$.
\medskip
Define two  maps  $\phi_1:\Z^n \to \Z^{\ell}$ and $ \phi_2:\Z^n \to \Z^{n-\ell}$ by
$$
\phi_1(b):=(b_{j_1}-n+\ell,b_{j_2}-n+\ell,\ldots, b_{j_{\ell}}-n+\ell),
$$
and
$$
\phi_2(b):=(b_{i_1},b_{i_2}, \ldots,b_{i_{n-\ell-1}},b_{i_{n-\ell}}-1).
$$

To prove the lemma, we will show that $b\in Z_J$ if and only if
$\phi_1(b)\in Y_{\ell}$ and $\phi_2(b)\in Y_{n-\ell}$, where by $Y_{\ell}$ and $Y_{n-\ell}$ we denote the sets $Y$ for complete graphs $K_{\ell}$ and $K_{n-\ell}$ respectively.

Assume that $b\in Z_J$. Then for any $I\subset [\ell]$, we have
\begin{equation*}
  \begin{split}
\sum_{t\in I} (\phi_1(b))_t& = \sum_{t\in I} (b_{j_t}-n+\ell)
                    =\bigl(\sum_{t\in I} b_{j_t}\bigr)+|I|(-n+\ell)
 \leq
    \kap_I-1+|I|(-n+\ell)\\
  &=\binom{|I|}{2} +|I|(n-|I|)-1+|I|(-n+\ell)
  =\binom{|I|}{2} +|I|(\ell-|I|)-1.
\end{split}
\end{equation*}
Hence $\phi_1(b)\in Y_{\ell}$.

\medskip
For any subset $I\subset [n-\ell-1]$, we have
\begin{equation*}
  \begin{split}
\sum_{t\in I} \phi_2(b)_t & = \sum_{j\in \{i_t: t\in I\}} b_{j}
    =\sum_{j\in \{i_t,t\in I\} \sqcup J} {b_j}- \sum_{j\in J} {b_j}\\
                          & = \sum_{j\in \{i_t,t\in I\} \sqcup J} {b_j}-(\kap_J-1)< (\kap_{\{i_t,t\in I\} \sqcup J}-1)-(\kap_J-1)\\
    &=\binom{|I|}{2}+|I|(n-\ell-|I|)
  \end{split}
\end{equation*}
and for all $I\subset [n-\ell]$ s.t. $(n-\ell)\in I$, we have
\begin{equation*}
  \begin{split}
\sum_{t\in I} \phi_2(b)_t & = \sum_{j\in \{i_t: t\in I\}} b_{j}-1
    =\sum_{j\in \{i_t,t\in I\} \sqcup J} {b_j}- \sum_{j\in J} {b_j}-1\\
                          & = \sum_{j\in \{i_t,t\in I\} \sqcup J} {b_j}-\kap_J\leq \kap_{\{i_t,t\in I\} \sqcup J}-\kap_J-1\\
    & =\binom{|I|}{2}+|I|(n-\ell-|I|)-1.
  \end{split}
\end{equation*}
Hence, $\phi_2(b)\in Y_{n-\ell}$.

\medskip
Let us  prove the converse. Assume that $\phi_1(b)\in Y_{\ell}$ and $\phi_2(b)\in Y_{n-\ell}$. We need to show that for any $I\subset [n]$, one has $\ds \sum_{i\in I} b_i\leq \binom{|I|}{2}+ |I|(n-|I|)-1$.
To do this we will consider three cases.
\medskip

\noindent {\it Case 1: $I \cap J=\emptyset$}.
\
Since $\phi_2(b)\in Y_{n-\ell}$, we have
\begin{equation*}
  \begin{split}
    \sum_{i\in I} b_i & \leq \binom{|I|}{2} +|I|(n-\ell-|I|)-1+1\\
    & = \binom{|I|}{2} +|I|(n-|I|)-n\ell\leq \binom{|I|}{2} +|I|(n-|I|)-1.
  \end{split}
\end{equation*}

\noindent {\it Case 2: $I \cap ([n] -  J)=\emptyset$}.
\
Since $\phi_1(b)\in Y_{\ell}$, we have
\begin{equation*}
  \begin{split}
    \sum_{i\in I} b_i & =  \sum_{i\in I} (b_i-n+\ell) +|I|(n-\ell) \\
                      & \leq \binom{|I|}{2} +|I|(\ell-|I|)-1+ |I|(n-\ell)=\binom{|I|}{2} +|I|(n-|I|)-1.
  \end{split}
\end{equation*}

\noindent {\it Case 3: $I \cap J\neq \emptyset$,  $I \cap ([n] -  J)\ne \emptyset$}.
\
We have
\begin{equation*}
  \begin{split}
    \sum_{i\in I} b_i & = \sum_{i\in I\cap J} b_i + \sum_{i\in I\cap ([n] - J)} b_i  \\
                      & \leq \binom{|I\cap J|}{2}  +|I\cap J|(n-|I\cap J|)-1
                        +\binom{|I\cap ([n] -  J)|}{2} \\
                      & \phantom{\leq \binom{|I\cap J|}{2} \ } + |I\cap ([n] -  J)|(n-\ell-|I\cap ([n] -  J)|)-1+1\\
                      &=\binom{|I|}{2} +|I|(n-|I|)-1.
  \end{split}
\end{equation*}
We also need to check that for $I=J$, we have the non-strict inequality, i.e.\
\[ \sum_{i\in J} b_i\leq \binom{|J|}{2}+ |J|(n-|J|)-1.
\]
It follows from the second case.

It remains to check that for all $I$ with $J\subsetneq I\subset [n-1]$, we have the strict inequality. Indeed this happens in the third case and we do not use $b_n$, so we have the same inequality, but without ``$+1$''.

Thus we showed that $b\in Z_J$ if and only if $\ds \phi_1(b)\in Y_{\ell}$ and $\phi_2(b)\in Y_{n-\ell}$.
It can be verified directly that the map
$(\phi_1, \phi_2): \Z^{n}\to \Z^{\ell}\times \Z^{n-\ell}$ is a bijection.
This implies that $|Z_J|=|Y_{\ell}|\times |Y_{n-\ell}|$.
Since the numbers of elements in $Y_{\ell}$ and $Y_{n-\ell}$
are equal to the numbers of spanning trees in
$K_{\ell}$ and $K_{n-\ell}$, respectively, we obtain
$|Z_J|=|J|^{|J|-2}(n-|J|)^{(n-|J|)-2}$
as required.
\end{proof}

\begin{proof}[End of the proof of Theorem~\ref{th:complete}]
  By the above lemma, $|Z_J|$ is equal to the product of the numbers of spanning trees in $K_J$ and in $K_{[n]- J}$. This implies that
  $$|Z| = \sum_{\emptyset \neq I\subset [n-1]} |Z_I|$$
counts the number of spanning forests in $K_n$ with two connected components (i.e.\ forests with exactly $n-2$ edges).
  As was proved by R\'{e}nyi~\cite{Re} (see also~\cite[Eq.~(19)]{LC} or \cite[Theorem 2.2]{My}),   the number of two-component spanning forests in $K_n$ is equal to $n^{n-4}\cdot\frac{(n-1)(n+6)}{2}$.
  Thus we obtain that
  $$
  |X|=n^{n-2}-n^{n-4}\cdot\frac{(n-1)(n+6)}{2}
  = n^{n-4} \cdot \frac{(n-2)(n-3)}{2} =n^{n-4}\binom{n-2}{2}.
  $$
\end{proof}

\begin{remark} \label{rem:top-internal}
  From the proof of Theorem~\ref{th:complete} we see that
  the dimension of the top degree component of the algebra $\BZ_{K_n}^i$ has a combinatorial interpretation as the difference between the number of spanning trees in $K_n$ and the number of two-component spanning forests in $K_n$.
  It would be very interesting to find a combinatorial description of the dimension of the top degree component of $\BZ_G^i$ for general graphs.
  (Notice that in general, the difference between the numbers of spanning trees and two-component spanning forests of a graph may not even be positive.)
\end{remark}

\medskip

Unlike the external and the central bizonotopal algebras, the internal algebras provide much weaker graph invariants. In fact, as the following theorem shows, their Hilbert series coincide for large classes of graphs.

\begin{theorem}\label{th:3-4}
  Let $G=(V,E)$ be a simple graph with $n\ge 4$ vertices and let $h^i_G(t)$ be the Hilbert series of its internal bizonotopal algebra $\BZ^i_G$.
\\[4pt]
(i) If $G$ is a $3$-regular graph (in which case $n\ge 4$ and is even) then
$$ h_G^i=(1+t)^n.$$
(ii) If $G$ is  a $4$-regular $4$-edge-connected graph (so that $n\ge 5$), then
$$h_G^i=(1+t+t^2)^{n}-nt^{2n-1}-t^{2n}.$$
\end{theorem}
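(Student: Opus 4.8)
The plan is to reduce both parts to counting standard monomials of the monomial ideal $\fI_G^{(-1)}$. Since $\BZ_G^i=\K[V]/\fI_G^{(-1)}$ is, by \eqref{eq:r-alg}--\eqref{eq:r-relat}, a quotient by a monomial ideal, it has a monomial basis consisting of the monomials $z^\ba=\prod_v z_v^{a_v}$ not divisible by any generator (any element of some $\fM_S^{(-1)}$). Such a monomial is divisible by an element of $\fM_S^{(-1)}$ exactly when $\sum_{v\in S}a_v\ge\kap_S-1$, so $z^\ba$ survives in $\BZ_G^i$ if and only if $\sum_{v\in S}a_v\le\kap_S-2$ for every nonempty $S\subseteq V$. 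The singleton constraints force $a_v\le\kap_v-2=d-2$ for a $d$-regular simple graph, so $h_G^i(t)$ is the generating function, by total degree $|\ba|$, of the vectors $\ba\in\Zp^V$ satisfying these inequalities. The only further input is a cut formula: for nonempty $S\subseteq V$, writing $m_{\mathrm{out}}(S)$ for the size of the cut $(S,V-S)$ and $m_{\mathrm{in}}$ for the number of edges inside $S$, counting edge-endpoints in $S$ gives $d|S|=2m_{\mathrm{in}}+m_{\mathrm{out}}(S)$ while $\kap_S=m_{\mathrm{in}}+m_{\mathrm{out}}(S)$, so that
\[
  \kap_S=\tfrac12\bigl(d|S|+m_{\mathrm{out}}(S)\bigr).
\]

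For part (i) we have $d=3$, hence every exponent lies in $\{0,1\}$ and the surviving monomials are squarefree; it therefore suffices to prove $\kap_S\ge|S|+2$ for every nonempty $S$, for then every $0/1$ vector obeys all the inequalities. By the cut formula this is equivalent to $|S|+m_{\mathrm{out}}(S)\ge4$, which is immediate when $|S|\ge4$ (the case $S=V$ using $n\ge4$). For $|S|\le3$ the simple-graph bound $m_{\mathrm{in}}\le\binom{|S|}{2}$ gives $m_{\mathrm{out}}(S)=3|S|-2m_{\mathrm{in}}\ge|S|(4-|S|)$, whence $|S|+m_{\mathrm{out}}(S)\ge|S|(5-|S|)\ge4$. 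Consequently the degree-$m$ component has dimension $\binom nm$ and $h_G^i(t)=(1+t)^n$.

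For part (ii) we have $d=4$, so every exponent lies in $\{0,1,2\}$ and $\kap_V=|E|=2n$. For a proper nonempty $S\subsetneq V$, $4$-edge-connectivity gives $m_{\mathrm{out}}(S)\ge4$, so the cut formula yields $\kap_S\ge2|S|+2$; since any $\{0,1,2\}$-vector satisfies $\sum_{v\in S}a_v\le2|S|\le\kap_S-2$, every inequality indexed by a proper subset is automatically fulfilled. Hence the unique binding constraint is the one for $S=V$, namely $|\ba|\le\kap_V-2=2n-2$, and the surviving monomials are exactly the $\{0,1,2\}$-vectors of total degree at most $2n-2$. The generating function $\sum_\ba t^{|\ba|}$ over all $\{0,1,2\}$-vectors is $(1+t+t^2)^n$; removing those of degree $2n$ (only $(2,\dots,2)$, contributing $t^{2n}$) and of degree $2n-1$ (lower a single coordinate to $1$, giving $n$ vectors and contributing $nt^{2n-1}$) yields $h_G^i(t)=(1+t+t^2)^n-nt^{2n-1}-t^{2n}$.

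The delicate point, and the only place the hypotheses are genuinely used, is the observation in part (ii) that for $\{0,1,2\}$-vectors the extremal constraint attached to $S$ is $\sum_{v\in S}a_v\le2|S|$, so that all proper-subset inequalities collapse precisely when $\kap_S\ge2|S|+2$ — which for a $4$-regular graph is exactly the statement $m_{\mathrm{out}}(S)\ge4$. Thus $4$-edge-connectivity is what makes $S=V$ the unique effective inequality; a smaller cut anywhere would leave a proper subset that kills additional top-degree monomials and alters the answer. Everything else is the elementary bookkeeping of the $(1+t+t^2)^n$ coefficients carried out above.
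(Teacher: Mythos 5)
Your proof is correct and follows essentially the same route as the paper's: identify the surviving monomials of the monomial ideal $\fI_G^{(-1)}$ via the inequalities $\sum_{v\in S}a_v\le\kap_S-2$, bound $\kap_S$ by edge-counting across the cut, and observe that regularity (resp.\ $4$-edge-connectivity) makes all constraints except the singletons (resp.\ the singletons and $S=V$) vacuous. If anything, your case analysis for $|S|\le 3$ in part (i) is more careful than the paper's single displayed inequality, which as written only covers $|S|\ge 4$.
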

\begin{proof}
  (i) If $G$ is a $3$-regular graph, then for each $v\in V$,  we have $\kap_v=3$ and thus $z_v^2=0$ in $\BZ_G^i$.
  Moreover, the  monomial $\ds \prod_{v\in V}z_v$ of degree $n$ does not vanish in $\BZ_G^i$, because for any $I\subset V$ we have
  \[ \kap_I-1\ge \frac{3}{2}|I|-1 >|I|.
  \]
  Hence, the ideal of relations of $\BZ_G^i$ does not contain any square-free monomials and therefore $h_G^i(t)=(1+t)^n$.

  \medskip

\noindent
(ii) If $G$ is $4$-regular, then  $\kap_v=4$ for all $v\in V$ and we have $z_v^3=0$ in $\BZ_G^i$.
We also have $\ds \prod_{v\in V}z_v^2=0$ in $\BZ_G^i$, because $\kap_V=|E|=2n$.
For any $u\in V$, the product $\ds \prod_{v\ne u}z_v^2$ does not vanish in $\BZ_G^i$, because for a proper  subset $I\subsetneq V$, by $4$-edge-connectedness, we have
\[
\kap_I-1\ge \frac{4}{2}(|I|-4)+4-1=2|I|+1.
\]
Therefore
\[ h_G^i(t)=(1+t+t^2)^{n}-nt^{2n-1}-t^{2n},
\]
where the subtracted terms correspond to the vanishing of monomials of total degree $2n-1$ and $2n$.
\end{proof}

\section{Concluding remarks}\label{sec:conclude}
Below we list several questions about bizonotopal algebras that, in our opinion, warrant further investigation.

\begin{enumerate}[(1)]
\item The dimension of the external bizonotopal algebra $\BZ_G^e$ of a graph $G$ has a nice combinatorial interpretation as the number of integer points in the polytope of weak parking functions of $G$. Finding a combinatorial  interpretation for the dimension of the central algebra $\BZ^c_G$ is an interesting problem. (Currently we only know such an interpretation for the dimension of the top degree component of $\BZ^c_G$.)

\item  It would be interesting to find a combinatorial interpretation of the dimension of the top-degree component of the internal bizonotopal algebra $\BZ_G^i$ for graphs other than $K_n$.

\item The Hilbert series of the internal bizonotopal algebras, unlike the central and external cases, do not satisfy the loopy deletion-contraction relation. Does it satisfy a recursion of some other kind?

\item The external and central bizonotopal algebras are very strong (almost complete) graph invariants. On the other hand, we saw that the internal algebra is a rather weak invariant. It would be interesting to characterize pairs of graphs with isomorphic $\BZ_G^i$.

\item
In a follow-up paper~\cite{KNSV} we show that using our loopy deletion-contraction relation~\eqref{eq:del-contr} one can construct a new  multivariate graph polynomial which we call the \emph{loopy polynomial}.
The Hilbert series of $r$-bizonotopal algebras are certain specializations of this loopy polynomial.
It strongly resembles Stanley's Tutte symmetric function and several related polynomials, but the precise relationship between them is currently unclear.  This aspect needs clarification.

\item
Even though the Hilbert series of the bizonotopal algebras are not specializations of the Tutte polynomial, our computer calculations indicate that they are all unimodal and log-concave.
Proving these properties and determining whether they hold for other specializations of the loopy polynomial seems to be a very interesting problem.
 \end{enumerate}

 \section{Appendix: Hilbert functions of bizonotopal algebras of complete graphs}
 \label{sec:num}
    Below we present the results of the computations for complete graphs $K_n$ with $n\le 9$ vertices of the dimensions of the bizonotopal algebras $\BZ^e_{K_n}$, $\BZ^c_{K_n}$ and $\BZ^i_{K_n}$  and their Hilbert function  $h(k)$, the dimension of the $k$th graded  component $\BZ^{(k)}$ of the corresponding algebra.

\subsection{External algebras $\BZ_{K_n}^e$}

  \noindent $K_2$:\ \ $\dim = 3$; \quad $h(k)$: \  {1, 2};

  \  \\ \noindent $K_3$:\ \ $\dim = 17$; \quad $h(k)$: \  {1, 3, 6, 7};

  \  \\ \noindent $K_4$:\ \ $\dim = 144$; \quad $h(k)$: \ {1, 4, 10, 20, 31, 40, 38};

  \  \\ \noindent $K_5$:\ \ $\dim = 1623$; \quad $h(k)$: \ {1, 5, 15, 35, 70, 121, 185, 255, 310, 335, 291};

  \  \\ \noindent $K_6$:\ \ $\dim = 22804$; \quad $h(k)$: \ {1, 6, 21, 56, 126, 252, 456, 756, 1161, 1666, 2232, 2796, 3281, 3546, 3516, 2932};

  \  \\ \noindent $K_7$:\ \ $\dim = 383415$; \quad $h(k)$: \ {1, 7, 28, 84, 210, 462, 924, 1709, 2954, 4809, 7420, 10906, 15309, 20559, 26454, 32655, 38591,
43589, 46984, 47649, 45150, 36961};

  \  \\ \noindent $K_8$:\ \ $\dim =  7501422$; \quad $h(k)$: \ {1, 8, 36, 120, 330, 792, 1716, 3432, 6427, 11376, 19160, 30864, 47748, 71184, 102524, 142920,
 193117, 253240, 322596, 399344, 480390, 561472, 637400, 701296, 746089, 765640, 748532,
691720, 561948};

  \  \\ \noindent $K_9$:\ \ $\dim = 167341283$; \quad $h(k)$: \ {1, 9, 45, 165, 495, 1287, 3003, 6435, 12870, 24301, 43677, 75177, 124485, 199035, 308187, 463287, 677520, 965493, 1342513, 1823553, 2421927, 3147723, 4005819, 4993839, 6100350, 7303545,
8570601, 9855829, 11101599, 12241305, 13203705, 13902291, 14254524, 14195199, 13575951,  12369033, 10026505}.

\subsection{Central algebras $\BZ_{K_n}^c$}

 \noindent $K_2$:\ \ $\dim = 1$; \quad $h(k)$: \ {1};

  \  \\ \noindent $K_3$:\ \ $\dim = 7$; \quad $h(k)$: \ {1, 3, 3};

  \  \\ \noindent $K_4$:\ \ $\dim = 66$; \quad $h(k)$: \ {1, 4, 10, 16, 19, 16};

  \  \\ \noindent $K_5$:\ \ $\dim = 792$; \quad $h(k)$: \ {1, 5, 15, 35, 65, 101, 135, 155, 155, 125};

  \  \\ \noindent $K_6$:\ \ $\dim = 11590$; \quad $h(k)$: \ {1, 6, 21, 56, 126, 246, 426, 666, 951, 1246, 1506, 1686, 1731, 1626, 1296};

  \  \\ \noindent $K_7$:\ \ $\dim = 200469$; \quad $h(k)$: \ {1, 7, 28, 84, 210, 462, 917, 1667, 2807, 4417, 6538, 9142, 12117, 15267, 18327, 20958, 22827, 23667, 23107, 21112, 16807};

  \  \\ \noindent $K_8$:\ \ $\dim = 90759016$; \quad $h(k)$: \ {1, 8, 36, 120, 330, 792, 1716, 3424, 6371, 11152, 18488, 29184, 44052, 63792, 88852, 119288, 154645, 193880, 235292, 276592, 315078, 347880, 371820, 384112, 382817, 364232, 328392, 262144};

  \  \\ \noindent $K_9$:\ \ $\dim = 2301604074$; \quad $h(k)$: \ {1, 9, 45, 165, 495, 1287, 3003, 6435, 12861, 24229, 43353, 74097, 121515, 191907, 292743, 432399, 619677, 863109, 1170073, 1545777, 1992195, 2506983, 3082599, 3705795, 4357593, 5013801, 5645313, 6219649, 6703245, 7064073, 7267815, 7285959, 7100739, 6660495, 5966613, 4782969}.

\subsection{Internal algebras $\BZ_{K_n}^i$}

 \noindent $K_3$:\ \ $\dim = 1$; \quad $h(k)$: \ {1};

  \  \\ \noindent $K_4$:\ \ $\dim = 16$; \quad $h(k)$: \ {1, 4, 6, 4, 1};

  \  \\ \noindent $K_5$:\ \ $\dim = 237$; \quad $h(k)$: \ {1, 5, 15, 30, 45, 51, 45, 30, 15};

  \  \\ \noindent $K_6$:\ \ $\dim = 3892$; \quad $h(k)$: \ {1, 6, 21, 56, 120, 216, 336, 456, 546, 580, 546, 456, 336, 216};

  \  \\ \noindent $K_7$:\ \ $\dim = 72425$; \quad $h(k)$: \ {1, 7, 28, 84, 210, 455, 875, 1520, 2415, 3535, 4795, 6055, 7140, 7875, 8135, 7875, 7140, 6055, 4795, 3430};

  \  \\ \noindent $K_8$:\ \ $\dim = 1521810$; \quad $h(k)$: \ {1, 8, 36, 120, 330, 792, 1708, 3368, 6147, 10480, 16808, 25488, 36688, 50288, 65808, 82384, 98813, 113688, 125588, 133288, 135954, 133288, 125588, 113688, 98533, 81488, 61440};

  \  \\ \noindent $K_9$:\ $\dim = 35794801$; \ $h(k)$: \ {1, 9, 45, 165, 495, 1287, 3003, 6426, 12789, 23905, 42273, 71127, 114387, 176463, 261891, 374808, 518301, 693693, 899857, 1132677, 1384803, 1645791, 1902663, 2140866, 2345553, 2503053, 2602341, 2636263, 2602341, 2502423, 2342907, 2134062, 1881243, 1596861, 1240029}.

\end{document}